\journal{Journal of Multivariate Analysis}
\newtheorem{theorem}{Theorem}
\newtheorem{lemma}[theorem]{Lemma}
\newtheorem{corollary}[theorem]{Corollary}
\theoremstyle{definition}
\newtheorem{condition}[theorem]{Condition}
\theoremstyle{remark}
\newtheorem{remark}[theorem]{Remark}
\newcommand{\dto}{\rightsquigarrow}
\newcommand{\R}{\mathbb{R}}
\newcommand{\Z}{\mathbb{Z}}
\newcommand\Prob{\mathbb{P}}    
\newcommand{\Cb}{\mathbb{C}}
\newcommand{\Bin}{\operatorname{Bin}}
\newcommand{\oh}{\mathrm{o}}
\newcommand{\Oh}{\mathrm{O}}
\newcommand{\point}{\,\cdot\,}
\renewcommand{\log}{\ln} 
\newcommand{\vect}{\bm }  
\newcommand{\eps}{\varepsilon}
\newcommand{\ind}{\operatorname{\mathds{1}}}
\newcommand{\diff}{{\,\mathrm{d}}}
\renewcommand{\leq}{\leqslant}
\renewcommand{\le}{\leqslant}
\renewcommand{\geq}{\geqslant}
\renewcommand{\ge}{\geqslant}
\newcommand{\Exp}{\operatorname{E}}
\newcommand{\Var}{\operatorname{Var}}
\newcommand{\Cov}{\operatorname{Cov}}
\newcommand{\Acfg}{\hat{A}_n^{\mathrm{CFG}}} 
\newcommand{\Acfgc}{\hat{A}_{n,\mathrm{c}}^{\mathrm{CFG}}} 
\newcommand{\Acfgb}{\hat{A}_{n,\beta}^{\mathrm{CFG}}} 
\newcommand{\Cn}{\hat{C}_n} 
\newcommand{\Cnb}{C_n^{\beta}}
\definecolor{auburn}{rgb}{0.43, 0.21, 0.1}
\definecolor{britishracinggreen}{rgb}{0.0, 0.26, 0.15}
\definecolor{burntumber}{rgb}{0.54, 0.2, 0.14}
\definecolor{carmine}{rgb}{0.59, 0.0, 0.09}
\newcommand{\johan}[1]{ \textcolor{red}{\sffamily\small [J: {#1}]}}
\begin{document}

\begin{frontmatter}
\title{Weak convergence of the weighted empirical beta copula process}

\author[rub]{Betina Berghaus}
\ead{betina.berghaus@rub.de}
\address[rub]{Ruhr-Universit\"at Bochum, Fakult\"at f\"ur Mathematik, Universit\"atsstr.\ 150, 44780 Bochum, Germany}

\author[ucl]{Johan Segers}
\ead{johan.segers@uclouvain.be}
\address[ucl]{Universit\'e catholique de Louvain, ISBA, Voie du Roman Pays 20, 1348 Louvain-la-Neuve, Belgium}


\begin{abstract}
The empirical copula has proved to be useful in the construction and understanding of many statistical procedures related to dependence within random vectors. The empirical beta copula is a smoothed version of the empirical copula that enjoys better finite-sample properties. At the core lie fundamental results on the weak convergence of the empirical copula and empirical beta copula processes. Their scope of application can be increased by considering weighted versions of these processes. In this paper we show weak convergence for the weighted empirical beta copula process.  The weak convergence result for the weighted empirical beta copula process is stronger than the one for the empirical copula and its use is more straightforward. The simplicity of its application is illustrated for weighted Cram\'er--von Mises tests for independence and for the estimation of the Pickands dependence function of an extreme-value copula. 
\end{abstract}

\begin{keyword}
Copula \sep empirical beta copula \sep empirical copula \sep weighted weak convergence \sep Pickands dependence function.
\end{keyword}
\end{frontmatter}

\section{Introduction}

In many statistical questions related to multivariate dependence, a crucial role is played by the copula function. 
A basic nonparametric copula estimator is the empirical copula, 
dating back to \cite{Rus76, Deh79} and defined as the empirical distribution function of the vectors of component-wise ranks.
The asymptotic behavior of the empirical copula has been established under various assumptions on the true copula and the serial dependence of the observed random vectors \citep[see, e.g.,][]{GanStu87,FerRadWeg04,Seg12,BucVol13}. The upshot is that the empirical copula process converges weakly to a centered Gaussian field with covariance function depending on the true copula and the serial dependence of the observations.

Recently, \citet{BerBucVol17} investigated the weak convergence of the weighted empirical copula process. They showed that the empirical copula process divided by a weight function, that can be zero on parts of the boundary of the unit cube, still converges weakly to a Gaussian field.  As illustrated in the latter reference, this stronger result allows for additional applications of the continuous mapping theorem or the functional delta method. However, this result is only valid for a clipped version  of the process. Since the empirical copula itself is not a copula, weak convergence fails on the upper boundaries of the unit cube \citep[Remark~2.3]{BerBucVol17}. 

The empirical beta copula \citep{SegSibTsu17} arises as a particular case of the empirical Bernstein copula \citep[see, e.g.,][]{SanSat04,JanSwaVer12} if the degrees of the Bernstein polynomials are set to the sample size. In the numerical experiments in \citep{SegSibTsu17}, the empirical beta copula exhibited a better performance than the empirical copula, both in terms of bias and variance. 

In contrast to the empirical copula, the empirical beta copula is a genuine copula, a property that it shares with the checkerboard copula, whose limit is derived in \cite{GenNesRem2017} and which is very close to the empirical copula if the margins are continuous. Since the empirical beta copula is itself a copula, it is possible to prove weighted weak convergence for the empirical beta copula process on the whole unit cube. This is the main result of the paper. Weak convergence on the whole unit cube rather than on a subset thereof is quite handy since it allows for a direct application of, e.g., the continuous mapping theorem. In particular, there is no longer any need to treat the boundary regions separately. 

We consider two applications. First, we modify the Cram\'er--von Mises test statistic for independence in \cite{genest+r:2004} by using the empirical beta copula and, more importantly, adding a weight function in the integral, emphasizing the tails. The asymptotic distribution of the statistic under the null hypothesis is an easy corollary of our main result. More interestingly, the inclusion of a weight function leads to a markedly better power against difficult alternatives such as the \emph{t} copula with zero correlation parameter, with favorable comparisons even to the novel statistics introduced recently by \citet{belalia+b+l+t:2017}. As a second application we consider the Cap\'er\`aa--Foug\`eres--Genest estimator \citep{CapFouGen97} of the Pickands dependence function of a multivariate extreme-value copula. Under weak dependence, replacing the empirical copula by the empirical beta copula yields a more accurate estimator. Its asymptotic distribution is again an immediate consequence of our main result.

The paper is organized as follows. In Section~\ref{sec:main} we introduce the various empirical copula processes and we state the main result of the paper, the weighted convergence of the empirical beta copula process on the whole unit cube. We illustrate the ease of application of the main result to the analysis of weighted Cram\'er--von Mises tests of independence (Section~\ref{sec:indep}) and nonparametric estimation of multivariate extreme-value copulas (Section~\ref{sec:pick}). The proofs are deferred to Section~\ref{sec:proof}, whereas a number of technical arguments are worked out in detail in Section~\ref{sec:aux}.

\section{Notation and main result}
\label{sec:main}

Let $(\vect X_n)_n$ be a strictly stationary time series whose $d$-variate stationary distribution function $F$ has continuous marginal distribution functions $F_1,\dots, F_d$ and copula $C$. Writing $\vect X_i = (X_{i,1}, \ldots, X_{i,d})$, we have, for $\vect x \in \R^d$,
\begin{align*}
  \Prob( X_{i,j} \le x_j ) &= F_j(x_j), &
  \Prob( \vect X_i \le \vect x ) &= F(\vect x) = C\{ F_1(x_1), \ldots, F_d(x_d) \}.
\end{align*}
For vectors $\vect x, \vect y \in \R^d$, the inequality $ \vect x \leq \vect y$ means that $x_j \leq y_j$ for $j = 1, \dots, d$. Similar conventions apply for other inequalities and for minima and maxima, denoted by the operators $\wedge$ and $\vee$, respectively.
 Given the sample $\vect X_1, \ldots, \vect X_n$, the aim is to estimate $C$ and functionals thereof.

Although the copula $C$ captures the instantaneous (cross-sectional) dependence, the setting is still general enough to include questions about serial dependence. For instance, if $(Y_n)_{n}$ is a univariate, strictly stationary time series, then the $d$-variate time series of lagged values $\vect X_n = (Y_n, Y_{n-1}, \ldots, Y_{n-d+1})$ is strictly stationary too and the instantaneous dependence within the series $(\vect X_n)_n$ corresponds to serial dependence within the original series $(Y_n)_n$ up to lag $d-1$.

For $i = 1, \ldots, n$ and $j = 1, \ldots, d$, let $R_{i,j}$ denote the rank of $X_{i,j}$ among $X_{1,j}, \ldots, X_{n,j}$. For convenience, we omit the sample size $n$ in the notation for ranks. The random vectors $\hat{\vect U}_i = (\hat{U}_{i,1}, \ldots, \hat{U}_{i,d})$, with $\hat{U}_{i,j} = n^{-1} R_{i,j}$ and $i = 1, \ldots, n$, are called pseudo-observations from $C$. Letting $\ind_A$ denote the indicator of the event $A$, the empirical copula is
\[
  \hat C_n(\vect u) = \frac{1}{n} \sum_{i=1}^n \ind_{ \{ \hat{\vect U}_i \leq \vect u\} }, \qquad \vect u \in [0,1]^d.
\]

Under mixing conditions on the sequence $( \vect X_n )_n$ and smoothness conditions on $C$, \citet{BucVol13} showed that
\begin{equation}
\label{eq:hCn:weak}
  \hat{\Cb}_n = \sqrt n(\hat C_n - C) \dto \Cb_C, \qquad n \to \infty
\end{equation}
in the metric space $\ell^\infty([0,1]^d) = \{ f : [0, 1]^d \to \R \mid \sup_{\vect u \in [0, 1]^d} \lvert f(\vect u) \rvert < \infty \}$ equipped with the supremum distance. The arrow $\dto$ in \eqref{eq:hCn:weak} denotes weak convergence in metric spaces as exposed in \cite{VanWel96}. The limit process in \eqref{eq:hCn:weak} is
\[
  \Cb_C(\vect u) 
  = \alpha_C(\vect u) - \sum_{j=1}^d \dot C_j(\vect u) \, \alpha_C(1, \dots, 1, u_j, 1, \dots, 1),
  \qquad \vect u \in [0, 1]^d,
\]
where $\dot{C}_j(\vect u) = \partial C(\vect u) / \partial u_j$ and where $\alpha_C$ is a tight, centered Gaussian process on $[0, 1]^d$ with covariance function
\begin{equation}
\label{eq:cov}
  \Cov\bigl( \alpha_C(\vect u), \alpha_C(\vect v) \bigr) 
  = \sum_{i \in \Z} \Cov\bigl( \ind_{ \{ \vect U_0 \leq \vect u \} }, \ind_{ \{ \vect U_i \leq \vect v \} } \bigr),
  \qquad \vect u, \vect v \in [0, 1]^d,
\end{equation}
where $\bm{U}_i = (U_{i,1}, \ldots, U_{i,d})$ and $U_{i,j} = F_j(X_{i,j})$. Since $F_j$ is continuous, the random variables $U_{i,j}$ are uniformly distributed on $[0, 1]$. The joint distribution function of $\bm{U}_i$ is $C$. The margins $F_1, \ldots, F_d$ being unknown, we cannot observe the $\vect U_i$, and this is why we use the $\hat{\vect U}_i$ instead. In the case of serial independence, weak convergence of $\hat{\Cb}_n$ has been investigated by many authors, see the survey by \citet{BucVol13}; the series in \eqref{eq:cov} simplifies to $\Cov( \ind_{ \{\vect U_0 \le \vect u\} }, \ind_{ \{ \vect U_0 \le \vect v \} } ) = C( \vect u \wedge \vect v ) - C( \vect u) C( \vect v)$ so that $\alpha_C$ is a $C$-Brownian bridge. In the stationary case, convergence of the series in \eqref{eq:cov} is a consequence of the mixing conditions imposed on $(\vect X_n)_n$. 

Weak convergence in \eqref{eq:hCn:weak} is helpful for deriving asymptotic properties of estimators and test statistics based upon the empirical copula, such as estimators of Kendall's tau or Spearman's rho or such as Kolmogorov--Smirnov and Cram\'er--von Mises statistics for testing independence. However, as argued by \citet{BerBucVol17}, sometimes weak convergence with respect to a stronger metric is required, i.e., a weighted supremum norm. Examples mentioned in the cited article include nonparametric estimators of the Pickands dependence function of an extreme-value copula and bivariate rank statistics with unbounded score functions such as the van der Waerden rank (auto-)correlation. This motivates the study of the weighted empirical copula process $\hat {\Cb}_n/g^\omega$, with $\omega \in(0,1/2)$ and a suitable weight function $g$ on $[0, 1]^d$. The limit of the empirical copula process is zero almost surely as soon as one of its arguments is zero or if all arguments but at most one are equal to one. We can thus hope to obtain weak convergence with respect to a weight function that vanishes at such points. A possible function with this property is
\begin{equation}
\label{eq:g}
  g(\vect u ) =
  \bigwedge_{j=1}^d \biggl\{ u_j \wedge \bigvee_{k \ne j} (1 - u_k) \biggr\},
  \qquad \vect u \in [0, 1]^d.
\end{equation}
Note that $g(\vect u)$ is small as soon as there exists $j$ such that either $u_j$ is small or else all other $u_k$ are close to $1$.
The trajectories of the processes $\hat{\Cb}_n/g^\omega$ are not bounded on the unit cube, hence the processes cannot converge weakly in $\ell^\infty([0,1]^d)$. A solution is to restrict the domain from $[0, 1]^d$ to sets of the form $[c/n, 1 - c/n]^d$ for $c \in (0, 1)$, or, more generally, to $\{ \vect v \in [0, 1]^d : g(\vect v) \ge c/n \}$. Relying on such a workaround, Theorem~2.2 in \cite{BerBucVol17} states weak convergence of the weighted empirical copula process $\hat{\Cb}_n/g^\omega$ to $\Cb_C/g^\omega$.
Note that $g(\vect v) = 0$ if and only if $v_j = 0$ for some $j$ or if there exists $j$ such that $v_k = 1$ for all $k \ne j$, and that $\Cb_C(\vect v) = 0$ almost surely for such $\vect v$ too.

The empirical copula is a piecewise constant function whereas the estimation target is continuous. It is natural to consider smoothed versions of the empirical copula. \citet{SegSibTsu17} defined the empirical beta copula as
\begin{equation}
\label{eq:empBetaCop}
  \Cnb (\vect u )= \frac{1}{n} \sum_{i=1}^n\prod_{j=1}^d F_{n,R_{i,j}}(u_j), 
  \qquad \vect u =(u_1,\dots , u_d) \in [0,1]^d,
\end{equation}
where $F_{n,r}$ is the distribution function of the beta distribution $\mathcal{B}(r,n+1-r)$, i.e., $F_{n,r}(u) = \sum_{s=r}^n \binom{n}{s} u^s (1-u)^{n-s}$, for $u \in [0, 1]$ and $r \in \{1, \ldots, n\}$. Note that 
\begin{equation}
\label{eq:empCop2empBetaCop}
  \Cnb (\vect u ) = \int_{[0,1]^d} \hat C_n (\vect w) \diff \mu_{n, \vect u}(\vect w),
\end{equation}
where $\mu_{n,\vect u}$ is the law of the random vector $(S_1/n, \dots , S_d/n)$, with $S_1, \dots, S_d$ being independent binomial random variables, $S_j \sim \Bin(n, u_j)$. In the absence of ties, the rank vector $(R_{1,j}, \ldots, R_{n,j})$ of the $j$-th coordinate sample is a permutation of $(1, \ldots, n)$. As a consequence, the empirical beta copula can be shown to be a genuine copula, unlike the empirical copula.

Under a smoothness condition on $C$, it follows from Theorem~3.6(ii) in \cite{SegSibTsu17} that weak convergence in $\ell^\infty([0, 1]^d)$ of the empirical copula process $\hat{\Cb}_n$ in \eqref{eq:hCn:weak} to a limit process $\Cb$ with continuous trajectories is sufficient to conclude the weak convergence of the empirical beta copula process: in the space $\ell^\infty([0, 1]^d)$, we have
\begin{equation}
\label{eq:betacop:weak}
  \Cb_n^{\beta} = \sqrt n (\Cnb - C) = \hat{\Cb}_n + \oh_{\Prob}(1) \dto \Cb, \qquad n \to \infty.
\end{equation}
The asymptotic distribution of the empirical beta copula is thus the same as the one of the empirical copula. Still, for finite samples, numerical experiments in \cite{SegSibTsu17} revealed the empirical beta copula to be more accurate.

Our aim is to extend the convergence statement in \eqref{eq:betacop:weak} for weighted versions $\Cb_n^{\beta} / g^\omega$, with $g$ as in \eqref{eq:g} and for suitable exponents $\omega > 0$. As the empirical beta copula is a genuine copula, the zero-set of $\Cb_n^{\beta}$ includes the zero-set of $g$, and on this set we implicitly define $\Cb_n^{\beta} / g^\omega$ to be zero. With this convention, the sample paths of $\Cb_n^{\beta} / g^\omega$ are bounded on $[0, 1]^d$; see Lemma~\ref{lem:boundary} below. We can therefore hope to prove weak convergence of $\Cb_n^{\beta} / g^\omega \dto \Cb_C/g^\omega$ in $\ell^\infty([0, 1]^d)$ without having to exclude those border regions of $[0, 1]^d$ where $g$ is small, as was necessary in \cite{BerBucVol17}. 

The analysis of $\Cb_n^{\beta}/g^\omega$ will be based on the one of $\hat{\Cb}_n/g^\omega$ via \eqref{eq:empCop2empBetaCop}. We will therefore need the same smoothness condition on $C$ as imposed in \citet[Condition~2.1]{BerBucVol17}, combining Conditions~2.1 and~4.1 in~\cite{Seg12}. Condition~\ref{cond:second} below is satisfied by many copula families: in \citep[Section~5]{Seg12}, part~(i) of the condition is verified for strict Archimedean copulas with continuously differentiable generators, whereas both parts of the condition are verified for the non-singular bivariate Gaussian copula and for bivariate extreme-value copulas with twice continuously differentiable Pickands dependence function and a growth condition on the latter's second derivative near the boundary points of its domain.


\begin{condition}
\label{cond:second}
(i) For every $j \in \{ 1, \dots, d \}$, the first-order partial derivative $\dot C_j(\vect u) := \partial C(\vect u)/\partial u_j$ exists and is continuous on $V_j=\{ \vect u \in [0,1]^d: u_j \in (0,1) \}$.

(ii) For every $j_2, j_2 \in \{1, \dots, d\}$, the second-order partial derivative $\ddot C_{j_1 j_2}(\vect u) := \partial^2 C(\vect u)/\partial u_{j_1}\partial u_{j_2}$ exists and is continuous on $V_{j_1} \cap V_{j_2}$.  Moreover, there exists a constant $K>0$ such that, for all $j_1, j_2 \in \{1, \ldots, d\}$, we have
\begin{equation}
\label{eq:second}
  \bigl\lvert \ddot C_{j_1j_2}(\vect u) \bigr\rvert 
  \le  K \min \left\{ \frac{1}{u_{j_1}(1-u_{j_1})}, \frac{1}{u_{j_2}(1-u_{j_2})} \right\}, \qquad \forall\, \vect u \in V_{j_1} \cap V_{j_2}.
\end{equation}
\end{condition}

The alpha-mixing coefficients of the sequence $(\vect X_n)_n$ are defined as
\[
  \alpha(k) = 
  \sup \left\{ 
    \lvert \Prob(A \cap B) - \Prob(A) \, \Prob(B) \rvert : 
    A \in \sigma(\vect X_j, j \le i), B \in \sigma(\vect X_{j+k}, j \ge i), i \in \Z 
  \right\},
\]
for $k = 1, 2, \ldots$. The sequence $(\vect X_n)_n$ is said to be strongly mixing or alpha-mixing if $\alpha(k) \to 0$ as $k \to \infty$. Now we can state the main result. 

\begin{theorem}
\label{thm:main}
Suppose that $\vect X_1, \vect X_2, \dots$ is a strictly stationary, alpha-mixing sequence with $\alpha(k) = \Oh(a^k)$, as $k \to \infty$, for some $a \in (0,1)$. Assume that within each variable, ties do not occur with probability one. If the copula $C$ satisfies Condition~\ref{cond:second}, then, for any $\omega\in[0,1/2)$, we have, in $\ell^\infty([0, 1]^d)$,
\[
  \Cb_n^\beta/g^\omega \dto \Cb_C/g^\omega, \qquad n \to \infty.
\]
\end{theorem}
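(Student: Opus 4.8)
The plan is to deduce weak convergence in $\ell^\infty([0,1]^d)$ from the two standard ingredients of convergence of the finite-dimensional distributions and asymptotic tightness, the latter being the genuine difficulty since the weight $g^\omega$ degenerates on the zero-set of $g$. The backbone of the argument is the integral representation \eqref{eq:empCop2empBetaCop}, which I would exploit through the exact decomposition
\[
  \Cb_n^\beta(\vect u)
  = \int_{[0,1]^d} \hat{\Cb}_n(\vect w)\, \diff \mu_{n,\vect u}(\vect w)
  + B_n(\vect u),
  \qquad
  B_n(\vect u) = \sqrt n \Bigl( \int_{[0,1]^d} C(\vect w)\, \diff \mu_{n,\vect u}(\vect w) - C(\vect u) \Bigr),
\]
separating a smoothed empirical copula process from a deterministic smoothing bias $B_n$. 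Convergence of the finite-dimensional distributions is the easy part: fixing points $\vect u_1, \dots, \vect u_m \in [0,1]^d$, at those with $g(\vect u_k)>0$ the unweighted convergence \eqref{eq:betacop:weak} together with the continuous mapping theorem yields joint convergence of $\Cb_n^\beta(\vect u_k)/g^\omega(\vect u_k)$ to $\Cb_C(\vect u_k)/g^\omega(\vect u_k)$, while at points with $g(\vect u_k)=0$ both the prelimit (by our convention) and the limit $\Cb_C(\vect u_k)$ vanish almost surely, so nothing needs to be shown.

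For tightness I would split $[0,1]^d$ into an inner region $\{g \ge \delta\}$ and a boundary layer $\{g < \delta\}$. On the fixed compact set $\{g \ge \delta\}$ the weight $g^\omega$ is continuous and bounded below by $\delta^\omega>0$, so asymptotic tightness of $\Cb_n^\beta/g^\omega$ follows from that of $\Cb_n^\beta$, guaranteed by \eqref{eq:betacop:weak}, via continuity of the map $f \mapsto f/g^\omega$. The whole problem is thereby reduced to showing that the boundary layer is asymptotically negligible, namely that
\[
  \lim_{\delta \downarrow 0} \limsup_{n\to\infty}
  \Prob\Bigl( \sup_{\vect u:\, g(\vect u) < \delta} \frac{\lvert \Cb_n^\beta(\vect u) \rvert}{g^\omega(\vect u)} > \eps \Bigr) = 0
  \qquad \text{for every } \eps>0,
\]
together with the matching almost-sure statement that $\sup_{g(\vect u)<\delta} \lvert \Cb_C(\vect u)\rvert/g^\omega(\vect u) \to 0$ as $\delta \downarrow 0$ for the limit process.

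To establish the boundary bound I would control the two terms of the decomposition separately. The bias $B_n$ is handled deterministically: a second-order Taylor expansion of $C$ against the binomial law $\mu_{n,\vect u}$ produces a leading term of the form $(2n)^{-1}\sum_{j=1}^d \ddot C_{jj}(\vect u)\, u_j(1-u_j)$, which by the growth bound \eqref{eq:second} of Condition~\ref{cond:second}(ii) is of order $n^{-1}$ pointwise; the point is to combine this with the decay of $g^\omega$ near the boundary and the shrinking binomial variance $u_j(1-u_j)/n$ to obtain $\sup_{\vect u} \lvert B_n(\vect u)\rvert/g^\omega(\vect u) \to 0$, an estimate I expect to be carried out in the auxiliary section. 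For the stochastic term I would use Lemma~\ref{lem:boundary}, which guarantees that the sample paths of $\Cb_n^\beta/g^\omega$ are bounded on $[0,1]^d$, in tandem with weighted increment bounds for the empirical copula process in the spirit of \cite{BerBucVol17}: writing the smoothed process as $\int \frac{g^\omega(\vect w)}{g^\omega(\vect u)}\,\frac{\hat{\Cb}_n(\vect w)}{g^\omega(\vect w)}\,\diff\mu_{n,\vect u}(\vect w)$, one must show that the binomial smoothing measure $\mu_{n,\vect u}$ concentrates on the scale $\sqrt{u_j(1-u_j)/n}$ over which $g^\omega$ varies, so that the weight ratio stays bounded in a suitable averaged sense.

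The main obstacle is precisely this last balancing act near the zero-set of $g$: the weight $g^\omega$ vanishes at the boundary while the beta-smoothing spreads the mass of $\hat{\Cb}_n$ over an $n^{-1/2}$-neighborhood, and one must show that these two competing effects cancel so that $\Cb_n^\beta/g^\omega$ remains tight right up to the boundary. This is exactly the feature that the clipped empirical copula process of \cite{BerBucVol17} could not deliver, and it is where the genuine-copula property of the empirical beta copula—encoded in Lemma~\ref{lem:boundary}—together with the smoothness bound \eqref{eq:second} must be used in an essential way.
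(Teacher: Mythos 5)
Your plan is sound and would work, but it organizes the argument differently from the paper. You go the classical route of finite-dimensional convergence plus asymptotic tightness, reducing everything to the negligibility of a fixed boundary layer $\{g<\delta\}$ as $\delta\downarrow 0$; the paper instead proves a uniform stochastic approximation, $\Cb_n^\beta/g^\omega = \bar{\Cb}_n/g^\omega + \oh_{\Prob}(1)$ on $\{g\ge n^{-\gamma}\}$ with $1/\{2(1-\omega)\}<\gamma<1$, via the three-term decomposition \eqref{eq:decomp} (weight-ratio integral close to one by Lemma~\ref{lem:int}, increment term small by Lemma~\ref{lem:bias2}, bias $\Oh(n^{-1}\log n)$ uniformly by Lemma~\ref{lem:bias}), kills the sliver $\{g<n^{-\gamma}\}$ by Lemma~\ref{lem:boundary}, and then simply inherits the weak limit from the known convergence of $\bar{\Cb}_n/g^\omega$ in \cite{BerBucVol17}. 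The approximation route buys the identification of the limit for free, whereas your route re-derives it through fidis. One caveat on your framing: the fixed-$\delta$ layer is \emph{not} covered by Lemma~\ref{lem:boundary}, which only controls $\{g\le n^{-\gamma}\}$ with $n^{-\gamma}\to 0$; over the intermediate region $\{n^{-\gamma}\le g<\delta\}$ your negligibility statement is essentially equivalent to the full weighted tightness of the smoothed process, so the decomposition into ``interior tightness plus boundary layer'' renames the difficulty rather than removing it --- you still need the concentration inequalities for $\mu_{n,\vect u}$ (Lemma~\ref{lem:bounds}), the uniform bias bound, and the extension of Theorem~2.2 of \cite{BerBucVol17} to $\{g\ge c/n\}$ there. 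Likewise, your claim that the Taylor expansion of the bias gives a term of order $n^{-1}$ ``pointwise'' understates the work: the second derivatives blow up near the boundary, and making the bound uniform (the convexity trick for $\ddot C_{jk}$ along the segment, Cauchy--Schwarz, and the reciprocal binomial moment of Lemma~\ref{lem:binom}) is where most of the effort in Lemma~\ref{lem:bias} goes. With those estimates supplied, your scheme closes.
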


\begin{remark}
The tie-excluding assumption is needed to ensure that the empirical beta copula is a genuine copula almost surely. The assumption implies that the $d$ stationary marginal distributions are continuous. For iid sequences, continuity of the margins is also sufficient. In the strictly stationary case, ties may occur with positive probability even if the margins are continuous; for instance, take a Markov chain where the current state is repeated with positive probability.
\end{remark}

\begin{remark}
The result also holds under weaker assumptions on the serial dependence. In \cite{BerBucVol17} it is shown that weak convergence of the weighted empirical copula process is still valid under more general assumptions on the marginal empirical processes and quantile processes and an assumption on the multivariate empirical process. In this case, however, the range of $\omega$ is smaller \citep[Theorem~4.5]{BerBucVol17}.
 \end{remark}

\section{Application: weighted Cram\'er--von Mises tests for independence}
\label{sec:indep}

Testing for independence is a classical subject which still attracts interest today. One approach consists of comparing the multivariate empirical cumulative distribution function to the product of empirical cumulative distribution functions. Integrating out the difference with respect to the sample distribution yields a Cram\'er--von Mises style test statistic going back to \citet{hoeffding:1948} and \citet{blum+k+r:1961}. To achieve better power, one may, in the spirit of the Anderson--Darling goodness-of-fit test statistic, introduce a weight function in the integral that tends to infinity near (parts of) the boundary of the domain; see \citet{dewet:1980}.

\citet{deheuvels:1981, deheuvels:1981:jmva} was perhaps the first to reformulate the question in the copula framework: for continuous variables, the problem consists in testing whether the true copula, $C$, is equal to the independence copula, $\Pi(\vect u) = \prod_{j=1}^d u_j$. The empirical copula process $\sqrt{n} (\Cn - \Pi)$, for which he proposed an ingenious combinatorial transformation, can thus be taken as a basis for the construction of test statistics. \citet{genest+r:2004} relied on his ideas to test the white noise hypothesis and considered Cram\'er--von Mises statistics based on the empirical copula process. \citet{genest+q+r:2006} studied the power of such statistics against local alternatives, while \citet{kojadinovic+h:2009} developed an extension to the case of testing for independence between random vectors. For the latter problem, \citet{fan:2017} proposed an alternative approach based on empirical characteristic functions.

Recently, \citet{belalia+b+l+t:2017} proposed to use the Bernstein empirical copula \cite{SanSat04,JanSwaVer12} rather than the empirical copula in the Cram\'er--von Mises test statistic. Moreover, they constructed new test statistics based on the Bernstein copula density estimator by \citet{bouezmarni+r+t:2010}. Recall that the empirical beta copula arises from the Bernstein empirical copula by a specific choice of the degree of the Bernstein polynomials.

A situation of particular interest is when the true copula differs from the independence copula mainly in the tails. For instance, the bivariate \emph{t} copula with zero correlation parameter has both Spearman's rho and Kendall's tau equal to zero. Still, the common value of its coefficients of upper and lower tail dependence is positive and depends on the degrees-of-freedom parameter. In their numerical experiments, \citet{belalia+b+l+t:2017} found that for such alternatives, the power of the Cram\'er--von Mises test based on both the empirical copula and the Bernstein empirical copula is particularly weak. Their test statistics based on the Bernstein copula density estimator performed much better.

To increase the power of the Cram\'er--von Mises statistic against such difficult alternatives, a natural approach is to follow \citet{dewet:1980} and introduce a weight function emphasizing the tails. For $\gamma \in [0, 2)$, we propose the weighted Cram\'er--von Mises statistic
\begin{equation}
\label{eq:CvM}
  T_{n,\gamma} 
  = 
  n \int_{[0,1]^d} \frac{\{ C_n^\beta(\vect u) - C(\vect u) \}^2}{\{g(\vect u)\}^\gamma} \, \mathrm{d} \vect u. 
\end{equation}
We are mostly interested in the case where $C(\vect u) = \Pi(\vect u) = \prod_{j=1}^d u_j$, the independence copula. If $\gamma = 0$, the weight function disappears and we are back to the original Cram\'er--von Mises statistic, but with the empirical beta copula replacing the empirical copula. 

\begin{corollary}
Under the assumptions of Theorem~\ref{thm:main}, we have, for every $\gamma \in [0, 2)$, the weak convergence
\[
  T_{n,\gamma} \dto T_\gamma = \int_{[0, 1]^d} \frac{\{\Cb_C(\vect u)\}^2}{\{g(\vect u)\}^\gamma} \, \mathrm{d} \vect{u},
  \qquad n \to \infty.
\]
This is particularly true in case of independent random sampling from a $d$-variate distribution with continuous margins and independent components ($C = \Pi$).
\end{corollary}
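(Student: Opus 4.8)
The plan is to realize $T_{n,\gamma}$ as a continuous functional of the weighted empirical beta copula process and then invoke Theorem~\ref{thm:main} together with the continuous mapping theorem. Fix $\gamma \in [0,2)$ and choose an exponent $\omega \in [0,1/2)$ with $\omega > (\gamma-1)/2$; this is possible precisely because $\gamma < 2$ forces $(\gamma-1)/2 < 1/2$ (one may take $\omega = 0$ when $\gamma < 1$). Writing $\Cb_n^\beta = (\Cb_n^\beta/g^\omega)\, g^\omega$ and noting that the zero-set $\{g = 0\}$ has Lebesgue measure zero, I obtain the identity
\[
  T_{n,\gamma} = \int_{[0,1]^d} \frac{\{\Cb_n^\beta(\vect u)\}^2}{\{g(\vect u)\}^\gamma}\,\diff\vect u = \int_{[0,1]^d} \left\{\frac{\Cb_n^\beta(\vect u)}{\{g(\vect u)\}^\omega}\right\}^2 \{g(\vect u)\}^{2\omega-\gamma}\,\diff\vect u,
\]
so that $T_{n,\gamma} = \Phi(\Cb_n^\beta/g^\omega)$ for the functional $\Phi(h) = \int_{[0,1]^d} h(\vect u)^2\, g(\vect u)^{2\omega-\gamma}\,\diff\vect u$ defined on $\ell^\infty([0,1]^d)$.

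The crux is an integrability statement: $\int_{[0,1]^d} g(\vect u)^{-s}\,\diff\vect u < \infty$ for every $s < 1$. Granting this, our choice of $\omega$ gives $2\omega - \gamma > -1$, hence $g^{2\omega-\gamma}$ is integrable and $\Phi$ is well defined and finite on all of $\ell^\infty([0,1]^d)$. Moreover $\Phi$ is continuous---indeed locally Lipschitz---since for $h_1, h_2 \in \ell^\infty([0,1]^d)$ one has
\[
  \lvert \Phi(h_1) - \Phi(h_2)\rvert \le \lVert h_1 - h_2\rVert_\infty\,\bigl(\lVert h_1\rVert_\infty + \lVert h_2\rVert_\infty\bigr)\int_{[0,1]^d} g^{2\omega-\gamma}\,\diff\vect u.
\]
Theorem~\ref{thm:main} yields $\Cb_n^\beta/g^\omega \dto \Cb_C/g^\omega$ in $\ell^\infty([0,1]^d)$, and the continuous mapping theorem then gives $T_{n,\gamma} = \Phi(\Cb_n^\beta/g^\omega) \dto \Phi(\Cb_C/g^\omega) = T_\gamma$; the limit is almost surely finite because $\Cb_C/g^\omega$ is almost surely bounded. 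The final sentence of the statement is the special case in which $(\vect X_n)_n$ is i.i.d.\ with continuous margins and independent components, so that $C = \Pi$ and the hypotheses of Theorem~\ref{thm:main} hold trivially. Note that the sharp threshold $s<1$ is exactly what is required: for $\gamma$ close to $2$ a smaller threshold would leave no admissible $\omega < 1/2$.

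The main obstacle is the integrability lemma for $g^{-s}$, $s<1$, which requires understanding the geometry of $g$ near its zero set. I would stratify the boundary: near a face $\{u_j = 0\}$ (with the remaining coordinates kept away from $0$ and $1$) one shows $g(\vect u) \asymp u_j$, so that integrating $u_j^{-s}$ transversally converges exactly when $s < 1$; near the lower-dimensional edges where $u_k = 1$ for all $k \ne j$ one shows $g(\vect u) \asymp \max_{k\ne j}(1 - u_k)$, and integrating the $(-s)$-th power of a maximum of $d-1$ small coordinates converges for $s < d-1$, a strictly weaker constraint when $d\ge 2$. A similar check at the mutual intersections of these strata (corners and multi-faces) shows that the genuinely low-dimensional approaches are no worse, the singularity always being dominated by the approach to a single codimension-one face. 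The binding stratum is thus the faces $\{u_j = 0\}$, pinning the threshold at $s<1$; carefully verifying the two-sided comparisons $g \asymp u_j$ and $g \asymp \max_{k\ne j}(1-u_k)$ uniformly on neighborhoods of the respective strata is the one genuinely technical piece, which I would relegate to the auxiliary section.
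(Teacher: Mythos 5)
Your proof is correct and follows essentially the same route as the paper: both rewrite $T_{n,\gamma}$ as a continuous functional of the weighted process $\Cb_n^\beta/g^\omega$ for a suitable $\omega\in[0,1/2)$ (the paper fixes $\omega=\gamma/4$, which satisfies your constraint $\omega>(\gamma-1)/2$), invoke the integrability of $g^{-s}$ for $s<1$, and conclude via Theorem~\ref{thm:main} and the continuous mapping theorem. Your explicit verification of $\int_{[0,1]^d} g^{-s}\,\mathrm{d}\vect u<\infty$ is a welcome addition, since the paper asserts this without proof; your stratification argument (faces $\{u_j=0\}$ giving the binding threshold $s<1$, edges giving $s<d-1$) is sound.
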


\begin{proof}
We have
\[
  T_{n, \gamma}
  =
  \int_{[0,1]^d} 
    \left( \frac{\Cb_n^\beta(\vect u)}{\{g(\vect u)\}^{\gamma/4}}\right)^2 \,
    \frac{1}{\{g(\vect u)\}^{\gamma/2}} \,
  \mathrm{d} \vect u.
\]
By Theorem~\ref{thm:main} applied to $\omega = \gamma / 4 \in [0, 1/2)$, the first part of the integrand converges weakly, in $\ell^\infty([0, 1]^d)$, to the stochastic process $(\Cb_C/g^{\gamma/4})^2$. Further, since $\gamma/2 \in [0, 1)$, the integral $\int_{[0, 1]^d} \{ g(\vect u) \}^{-\gamma/2} \, \mathrm{d} \vect u$ is finite. The linear functional that sends a measurable function $f \in \ell^\infty([0, 1]^d)$ to the scalar $\int_{[0, 1]^d} f( \vect u ) \, \{ g(\vect u) \}^{-\gamma/2} \, \mathrm{d} \vect u$ is therefore bounded. The conclusion follows from the continuous mapping theorem.
\end{proof}

A comprehensive simulation study comparing the performance of the weighted Cram\'er--von Mises statistic against all competitors and for a wide range of tuning parameters and data-generating processes is out of this paper's scope. We limit ourselves to the case identified as the most difficult one in \citet{belalia+b+l+t:2017}, the bivariate \emph{t} copula with zero correlation parameter. We copy the settings in their Section~5: the degrees-of-freedom parameter is $\nu = 2$ and we consider independent random samples of size $n \in \{100, 200, 400, 500\}$. We compare the power of our statistic $T_{n,\gamma}$ with the powers of their statistics $T_n,\delta_n,I_n$ at the $\alpha = 5\%$ significance level based on $1\,000$ replications. 

We implemented our estimator in the statistical software environment \textsf{R} \citep{Rlanguage} using the package \textsf{copula} \citep{KojYan10R}. The critical values were computed by a Monte Carlo approximation based on $10\,000$ random samples from the uniform distribution on the unit square. For the statistics in \cite{belalia+b+l+t:2017}, we copied the relevant values from their Tables~4, 5, and~6. Their statistics depend on the degree, $k$, of the Bernstein polynomials, which they selected in $\{5, 10, \ldots, 30\}$. Note that for $\gamma = 0$ and $k = n$, our statistic $T_{n,\gamma}$ coincides with their statistic $T_n$. Their statistics $\delta_n$ and $I_n$ are based on the Bernstein copula density estimator in \cite{bouezmarni+r+t:2010}.

The results are presented in Table~\ref{tab:power}. The unweighted Cram\'er--von Mises statistic $T_n$ does a poor job in detecting the alternative. The novel statistics $\delta_n$ and $I_n$ in \cite{belalia+b+l+t:2017} are more powerful, especially the statistic $I_n$, which is a Cram\'er--von Mises statistic based on the Bernstein copula density estimator. For the weighted Cram\'er--von Mises statistic $T_{n,\gamma}$, the power increases with $\gamma$. For the largest considered value, $\gamma = 1.75$, the power is higher than the one of $T_n$, $\delta_n$ and $I_n$ for any value of $k$ considered.

\begin{table}
\begin{center}
\begin{tabular}{r|r@{\qquad}rrr|r@{\qquad}r}
\toprule
&$k$& $T_n$ & $\delta_n$ & $I_n$ & $\gamma$ & $T_{n,\gamma}$ \\
\midrule\midrule
$n=100$ & $\phantom{0}5$ & 0.056 & 0.114 & 0.094 & $0.25$  & 0.102\\
& $10$ & 0.064 & 0.130 & 0.168 & $0.50$ & 0.091\\
& $15$ & 0.066 & 0.166 & 0.254 & $0.75$ & 0.138\\
& $20$ & 0.070 & 0.132 & 0.270 & $1.00$ & 0.179\\
& $25$ & 0.070 & 0.102 & 0.284 & $1.25$ & 0.216\\
& $30$ & 0.068 & 0.114 & 0.294 & $1.50$ & 0.292\\
& & & & & $1.75$ & 0.401\\
\midrule
$n=200$ & $\phantom{0}5$ & 0.076 & 0.176 & 0.094 & $0.25$ & 0.123\\
& $10$ & 0.080 & 0.222 & 0.308 & $0.50$ & 0.161\\
& $15$ & 0.088 & 0.226 & 0.442 & $0.75$ & 0.233\\
& $20$ & 0.094 & 0.210 & 0.466 & $1.00$ & 0.335\\
& $25$ & 0.096 & 0.176 & 0.472 & $1.25$ & 0.428\\
& $30$ & 0.086 & 0.148 & 0.458 & $1.50$ & 0.605\\
& & & & & $1.75$ & 0.705\\
\midrule
$n=400$ & $\phantom{0}5$ & 0.044 & 0.366 & 0.230 & $0.25$ & 0.278\\
& $10$ & 0.038 & 0.492 & 0.588 & $0.50$ & 0.427\\
& $15$ & 0.048 & 0.472 & 0.702 & $0.75$ & 0.555\\
& $20$ & 0.044 & 0.432 & 0.762 & $1.00$ & 0.777\\
& $25$ & 0.048 & 0.382 & 0.772 & $1.25$ & 0.864\\
& $30$ & 0.050 & 0.354 & 0.780 & $1.50$ & 0.930\\
& & & & & $1.75$ & 0.964\\
\midrule
$n=500$ & $\phantom{0}5$ & 0.072 & 0.398 & 0.192 & $0.25$ &0.406 \\
& $10$ & 0.096 & 0.542 & 0.688 & $0.50$ & 0.588\\
& $15$ & 0.100 & 0.552 & 0.746 & $0.75$ & 0.773\\
& $20$ & 0.110 & 0.506 & 0.806 & $1.00$ & 0.883\\
& $25$ & 0.106 & 0.476 & 0.824 & $1.25$ & 0.966\\
& $30$ & 0.096 & 0.458 & 0.824 & $1.50$ & 0.986\\
& & & & & $1.75$ & 0.992\\
\bottomrule
\end{tabular}
\end{center}
\caption{\label{tab:power}Testing the independence hypothesis when the true copula is equal to the \emph{t} copula with zero correlation parameter and degrees-of-freedom parameter $\nu = 2$. Powers based on $1\,000$ random samples of sizes $n \in \{100, 200, 400, 500\}$ at significance level $\alpha = 5\%$. Comparison between, on the one hand, the statistics $T_n,\delta_n,I_n$ in \citet{belalia+b+l+t:2017} with degree $k$ of the Bernstein polynomials and, on the other hand, the weighted Cram\'er--von Mises statistic $T_{n,\gamma}$ in Eq.~\eqref{eq:CvM} with weight parameter $\gamma$. The values in the columns headed $T_n$, $\delta_n$ and $I_n$ have been copied from Tables~4--6 in \cite{belalia+b+l+t:2017}.}
\end{table}

\section{Application: nonparametric estimation of a Pickands dependence function}
\label{sec:pick}

A $d$-variate copula $C$ is a multivariate extreme-value copula if and only if it can be written as
\[ 
  C(\vect u) 
  = 
  \exp \left\{ 
    \left( \sum_{j=1}^d \log u_j \right) \, 
    A  \left( \frac{\log u_1}{\sum_{j=1}^d \log u_j}, \dots, \frac{\log u_{d-1}}{\sum_{j=1}^d \log u_j} \right)
  \right\},
\]
for $\vect u \in (0,1]^d \setminus \{ (1, \ldots, 1) \}$. The function $A:\Delta_{d-1} \to [1/d,1]$ is called the Pickands dependence function \citep[after][]{Pic81}, its domain being the unit simplex $\Delta_{d-1} = \{ \vect t=(t_1, \dots , t_{d-1}) \in [0,1]^{d-1}: \sum_{j=1}^{d-1}t_j\leq 1\}$.  

Writing $t_d = t_d(\vect t) = 1 - t_1 - \cdots - t_{d-1}$ for $\vect t \in \Delta_{d-1}$, we have $C(u^{t_1}, \ldots, u^{t_d}) = u^{A(\vect t)}$ for $0 < u < 1$, and thus
\[
  \log \{ A( \vect t ) \}
  =
  - \gamma + \int_0^1 \left\{ C(u^{t_1}, \ldots, u^{t_d}) - \ind_{[e^{-1}, 1]}(u) \right\} \, \frac{\diff u}{u \log u},  
\]
where $\gamma = 0.5772156649\ldots$ is the Euler--Mascheroni constant. The rank-based Cap\'er\`aa--Foug\`eres--Genest (CFG) estimator, $\Acfg(\vect t)$, arises by replacing $C$ in the above formula by the empirical copula, $\Cn$; see \citep{CapFouGen97} for the original estimator and see \citep{GenSeg09, gudendorf+s:2012} for the rank-based versions in dimensions two and higher, respectively. We now propose to replace $C$ by the empirical beta copula \eqref{eq:empBetaCop} instead, which gives the estimator
\begin{equation}
\label{eq:CFG:b}
  \log \{ \Acfgb(\vect t) \}
  =
  - \gamma + \int_0^1 \left\{ \Cnb(u^{t_1}, \ldots, u^{t_d}) - \ind_{[e^{-1}, 1]}(u) \right\} \, \frac{\diff u}{u \log u}.
\end{equation}
The technique could also be used for other estimators based upon the empirical copula \citep{BucDetVol11,BerBucDet13}. 

For the CFG-estimator on usually employs the endpoint-corrected version
\begin{equation}
\label{eq:CFG:c}
  \log \{ \Acfgc(\vect t) \}
  =
  \log \{ \Acfg(\vect t) \}
  -
  \sum_{j=1}^d t_j \log \{ \Acfg( \vect e_j ) \},
\end{equation}
where $\vect e_j = (0, \ldots, 0, 1, 0, \ldots, 0)$ is the $j$-th canonical unit vector in $\mathbb{R}^d$. For the estimator based on the empirical beta copula the endpoint correction is immaterial, since $\Cnb$ is a copula itself and thus $\log \Acfgb( \vect e_j ) = 0$ for all $j = 1, \ldots, d$.

Thanks to Theorem~\ref{thm:main}, the limit of the beta CFG estimator can be derived from Theorem~\ref{thm:main} by a straightforward application of the continuous mapping theorem. The result does not require serial independence and can be extended to higher dimensions.

\begin{corollary}
Let $C$ be a $d$-variate extreme-value copula with Pickands dependence function $A : \Delta_{d-1} \to [1/d, 1]$. Under the assumptions of Theorem~\ref{thm:main} we have, as $n \to \infty$,
\[
  \sqrt n \left\{ \Acfgb(\point) - A(\point) \right\} 
  \dto \mathbb{A}(\point) \; \text{ in } \ell^\infty(\Delta_{d-1}),
\]
where, for $\vect t \in \Delta_{d-1}$, we define $\mathbb{A}(\vect t) = A( \vect t ) \int_0^1 \mathbb{C}_C(u^{t_1}, \ldots, u^{t_d}) \, (u \log u)^{-1} \, \mathrm{d}u$.
\end{corollary}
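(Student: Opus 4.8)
The plan is to write the estimator as a smooth transformation of a bounded linear functional of the empirical beta copula process and then to combine Theorem~\ref{thm:main} with the continuous mapping theorem and the functional delta method. Writing $u^{\vect t}=(u^{t_1},\dots,u^{t_d})$ and subtracting the integral representation of $\log A$ (displayed before~\eqref{eq:CFG:b}) from that of $\log\Acfgb$ in~\eqref{eq:CFG:b}, the indicator and the constant $-\gamma$ cancel, and pulling the factor $\sqrt n$ inside the finite integral gives
\[
  \sqrt n\,\bigl\{\log\Acfgb(\vect t) - \log A(\vect t)\bigr\}
  = \int_0^1 \Cb_n^\beta(u^{\vect t})\,\frac{\diff u}{u\log u}, \qquad \vect t\in\Delta_{d-1},
\]
where $\Cb_n^\beta = \sqrt n(\Cnb - C)$. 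Fixing some $\omega\in(0,1/2)$ (say $\omega=1/4$), I factor the integrand as $\{\Cb_n^\beta/g^\omega\}(u^{\vect t})$ times the deterministic kernel $g(u^{\vect t})^\omega(u\log u)^{-1}$; on the zero-set of $g$ both $\Cb_n^\beta$ and $g^\omega$ vanish (Lemma~\ref{lem:boundary}), so the factorisation holds throughout $(0,1)$. This identifies the left-hand side with $\Psi(\Cb_n^\beta/g^\omega)$, where $\Psi$ sends $h\in\ell^\infty([0,1]^d)$ to the function $\vect t\mapsto\int_0^1 h(u^{\vect t})\,g(u^{\vect t})^\omega\,(u\log u)^{-1}\,\diff u$.

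The crux is to show that $\Psi$ is a bounded, hence continuous, linear map into $\ell^\infty(\Delta_{d-1})$, uniformly over the simplex. From $g(\vect u)\le u_j$ and $g(\vect u)\le\bigvee_{k\ne j}(1-u_k)$ for every $j$, together with $1-u^{t_k}\le t_k\lvert\log u\rvert$, I obtain the two uniform envelopes $g(u^{\vect t})\le u^{\max_j t_j}\le u^{1/d}$ and $g(u^{\vect t})\le\lvert\log u\rvert$, valid for all $\vect t\in\Delta_{d-1}$ and $u\in(0,1)$. Splitting the integral at $u=1/2$, using the power envelope near $0$ and the logarithmic envelope near $1$, and substituting $s=-\log u$ in the second piece yields
\[
  \sup_{\vect t\in\Delta_{d-1}}\int_0^1\frac{g(u^{\vect t})^\omega}{u\,\lvert\log u\rvert}\,\diff u
  \le \int_0^{1/2}\frac{u^{\omega/d-1}}{\lvert\log u\rvert}\,\diff u + \int_0^{\log 2} s^{\omega-1}\,\diff s < \infty,
\]
both pieces converging precisely because $\omega>0$. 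Hence $\lVert\Psi(h)\rVert_\infty\le\lVert h\rVert_\infty$ times this finite constant, so $\Psi$ is bounded and linear. This uniform integrability is the main obstacle: the kernel $(u\log u)^{-1}$ is non-integrable at both endpoints, and one must verify that the weight $g(u^{\vect t})^\omega$ tames both singularities simultaneously and uniformly in $\vect t$, including near the vertices $\vect e_j$ of the simplex, where $g(u^{\vect t})$ collapses to the identically zero function.

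With $\Psi$ continuous, Theorem~\ref{thm:main} and the continuous mapping theorem give
\[
  \sqrt n(\log\Acfgb - \log A)=\Psi(\Cb_n^\beta/g^\omega)\dto\Psi(\Cb_C/g^\omega)=:\Lambda
\]
in $\ell^\infty(\Delta_{d-1})$, where $\Lambda(\vect t)=\int_0^1\Cb_C(u^{\vect t})\,(u\log u)^{-1}\,\diff u$; its sample paths are continuous by dominated convergence with the same $\vect t$-free envelopes. It then remains to pass from $\log\Acfgb$ to $\Acfgb$ by the functional delta method. Set $L_n:=\log\Acfgb+\gamma$ and $L:=\log A+\gamma$, and let $\phi:\ell^\infty(\Delta_{d-1})\to\ell^\infty(\Delta_{d-1})$ be $\phi(\ell)=\exp(-\gamma+\ell)$, so that $\phi(L_n)=\Acfgb$ and $\phi(L)=A$. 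Because $A$ takes values in $[1/d,1]$, $L$ is uniformly bounded, and $\phi$ is Hadamard differentiable at $L$ with derivative $h\mapsto e^{-\gamma+L}h=A\,h$: indeed $t_n^{-1}\{\exp(-\gamma+L+t_n h_n)-\exp(-\gamma+L)\}\to A\,h$ in supremum norm whenever $h_n\to h$ and $t_n\to0$. The functional delta method therefore yields
\[
  \sqrt n(\Acfgb - A)=\sqrt n\{\phi(L_n)-\phi(L)\}\dto A\cdot\Lambda=\mathbb A,
\]
which is the asserted limit.
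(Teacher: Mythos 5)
Your proof is correct and follows essentially the same route as the paper's: the same integral identity, the same factorisation through $\Cb_n^\beta/g^\omega$ and a bounded linear map into $\ell^\infty(\Delta_{d-1})$, the continuous mapping theorem via Theorem~\ref{thm:main}, and the functional delta method for the exponential. The only difference is that you spell out the uniform boundedness of $\int_0^1 g(u^{t_1},\dots,u^{t_d})^\omega (u\lvert\log u\rvert)^{-1}\,\mathrm{d}u$ via the envelopes $g(u^{\vect t})\le u^{1/d}$ and $g(u^{\vect t})\le\lvert\log u\rvert$, a step the paper asserts without proof.
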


\begin{proof}
Let $0 < \omega < 1/2$. We have
\begin{align*}
  \sqrt n \left[ \log \{ \Acfgb(\vect t) \} - \log \{ A(\vect t) \} \right] 
  = 
  \int_0^1 
    \Cb_n^{\beta}(u^{t_1}, \ldots, u^{t_d}) 
  \frac{\mathrm{d}u}{u \log u} 
  = 
  \int_0^1 
    \frac{\Cb_n^{\beta}(u^{t_1}, \ldots, u^{t_d})}{\{g(u^{t_1}, \ldots, u^{t_d})\}^\omega} \, 
    \{g(u^{t_1}, \ldots, u^{t_d})\}^\omega \, 
  \frac{\mathrm{d}u}{u \log u}.
\end{align*}
The integral
$
  \int_0^1 \{g(u^{t_1}, \ldots, u^{t_d})\}^\omega \, (u \log u)^{-1} \, \mathrm{d}u
$
is bounded, uniformly in $\vect t \in \Delta_{d-1}$. Therefore, the linear map that sends a measurable function $f \in \ell^\infty([0, 1]^d)$ to the bounded function $\vect t \mapsto \int_0^1 f(u^{t_1}, \ldots, u^{t_d}) \, \{g(u^{t_1}, \ldots, u^{t_d})\}^\omega \, (u \log u)^{-1} \, \mathrm{d} u$ is continuous. By Theorem~\ref{thm:main} and the continuous mapping theorem, we find, as $n \to \infty$,
\[
  \sqrt n \left[\log \{\Acfgb(\point)\} - \log\{A(\point)\}\right]  
  \dto \left(\int_0^1 \Cb_C(u^{t_1}, \ldots, u^{t_d}) \, \frac{\mathrm{d}u}{u \log u}\right)_{\vect t \in \Delta_{d-1}}
  \; \text{ in } \ell^\infty(\Delta_{d-1}).  
\]
Finally, the result follows by an application of the functional delta method.
\end{proof}

We compare the finite-sample performance of the endpoint-corrected CFG estimator with the variant based on the empirical beta copula. As performance criterion for an estimator $\hat{A}$, we use the integrated mean squared error,
\[
  \int_{\Delta_{d-1}} \Exp \left[ \left\{ \hat{A}(\vect t) - A(\vect t) \right\}^2 \right] \, \diff \vect t
  =
  \Exp \left[ \left\{ \hat{A}(\vect T) - A(\vect T) \right\}^2 \right],
\]
where the random variable $\vect T$ is uniformly distributed on $\Delta_{d-1}$ and is independent of the sample from which $\hat{A}$ was computed. We approximate the integrated mean squared error through a Monte Carlo procedure: for a large integer $M$, we generate $M$ random samples of size $n$ from a given copula and we calculate
\[
  \frac{1}{M} \sum_{m=1}^M \left\{ \hat{A}_n^{(m)}(\vect T^{(m)}) - A(\vect T^{(m)}) \right\}^2
\]
where $\hat{A}_n^{(m)}$ denotes the estimator based upon sample number $m$, and where the random variables $\vect T^{(1)}, \ldots, \vect T^{(m)}$ are uniformly distributed on $\Delta_{d-1}$ and are independent of each other and of the copula samples. The approximation error is $\Oh_{\Prob}(1/\sqrt{M})$, aggregating both the sampling error and the integration error. A similar trick was used in \cite{SegSibTsu17} and is more efficient then first estimating the pointwise mean squared error through a Monte Carlo procedure and then integrating this out via numerical integration.

We considered the following data-generating processes:
\begin{itemize}
\item[(M1)]
independent random sampling from the bivariate Gumbel copula \cite{gumbel:1961}, which has Pickands dependence function $A(t) = \{t^{1/\alpha} + (1-t)^{1/\alpha}\}^{\alpha}$ for $t \in [0, 1]$ and with parameter $\alpha \in [0, 1]$, for which Kendall's tau is $\tau = 1-\alpha$. We also considered independent random samples from the bivariate Galambos, H\"usler--Reiss and t-EV copula families, yielding similar results as for the bivariate Gumbel copula, not shown to save space. See, e.g., \cite{GudSeg2010} for the definitions of these copulas;  

\item[(M2)]
independent random sampling from a special case of the trivariate asymmetric logistic extreme-value copula \cite{tawn:1990}, with Pickands dependence function $A(t_1, t_2) = \sum_{(i,j) \in \{(1, 2), (2, 3), (3, 1)\}} \{(\theta t_i)^{1/\alpha} + (\phi t_j)^{1/\alpha} \}^\alpha + 1 - \theta - \phi$ for $(t_1, t_2) \in \Delta_2$ and $t_3 = 1-t_1-t_2$. As in \citep[Section~5]{GudSeg12}, we set $\phi = 0.3$ and $\theta = 0.6$, and $\alpha$ varies between $0$ and $1$;

\item[(M3)]
sampling from the strictly stationary bivariate moving maximum process $(U_{t1}, U_{t2})_{t \in \mathbb{Z}}$ given by
\[
  U_{t1} = \max \left\{ W_{t-1,1}^{1/a}, W_{t1}^{1/(1-a)} \right\}
  \qquad
  \text{and}
  \qquad
  U_{t2} = \max \left\{ W_{t-1,2}^{1/b}, W_{t2}^{1/(1-b)} \right\},
\]
where $a, b \in [0, 1]$ are two parameters and where $(W_{t1}, W_{t2})_{t \in \mathbb{Z}}$ is an iid sequence of bivariate random vectors whose common distribution is an extreme value-copula with some Pickands dependence function $B$. By Eq.~(8.1) in \cite{bucher+s:2014}, the stationary distribution of $(U_{t1}, U_{t2})$ is an extreme-value copula too, and its Pickands dependence function can be easily calculated to be
\[
  A(t)
  =
  \{ a(1-t)+bt \} \, B \left( \frac{bt}{a(1-t)+bt} \right) 
  +
  \{ (1-a)(1-t) + (1-b)t \} \, B \left( \frac{(1-b)t}{(1-a)(1-t) + (1-b)t} \right),
\]
for $t \in [0, 1]$. We let $B(t) = \{t^{1/\alpha} + (1-t)^{1/\alpha}\}^{\alpha}$ for $t \in [0, 1]$ and $\alpha \in [0, 1]$ (the bivariate Gumbel copula as above, with Kendall's tau $\tau = 1-\alpha$) and we set $a = 0.1$ and $b = 0.7$, so that $A$ is asymmetric. 
\end{itemize}

The results are shown in Figure~\ref{fig:Pickands}. Each plot is based on $10\,000$ samples of size $n \in \{20, 50, 100\}$. For weak dependence (small $\tau$, large $\alpha$), the beta variant \eqref{eq:CFG:b} is the more efficient one, whereas for strong dependence (large $\tau$, small $\alpha$), it is the usual CFG estimator \eqref{eq:CFG:c} which is more accurate.

In order to gain a better understanding, we have also traced some trajectories of estimated Pickands dependence functions for independent random samples of the bivariate Gumbel copula at $\tau \in \{0.3, 0.9\}$ and $n \in \{20, 50, 100\}$; see Figure~\ref{fig:Pickands:A}. For each trajectory of the CFG estimator, there is a corresponding trajectory of the new estimator that is based on the same sample. For large $\tau$, the true extreme-value copula $C$ is close to the Fr\'echet--Hoeffding upper bound, $M(u_1, u_2) = \max(u_1, u_2)$. As a result, $C$ is strongly curved around the main diagonal $u_1 = u_2$, and this implies a strong curvature of the Pickands dependence function $A$ around $t = 1/2$. The empirical beta copula can be seen as a smoothed version of the empirical copula with an implicit bandwidth of the order $1/\sqrt{n}$ \citep[p.~47]{SegSibTsu17}. For smaller $n$, oversmoothing occurs, producing a negative bias for the empirical beta copula around the diagonal $u_1 = u_2$ and thus a positive bias for the beta variant of the CFG estimator around $t = 1/2$.

\begin{figure}
\begin{center}
\begin{tabular}{@{}ccc}
\includegraphics[width=0.32\textwidth]{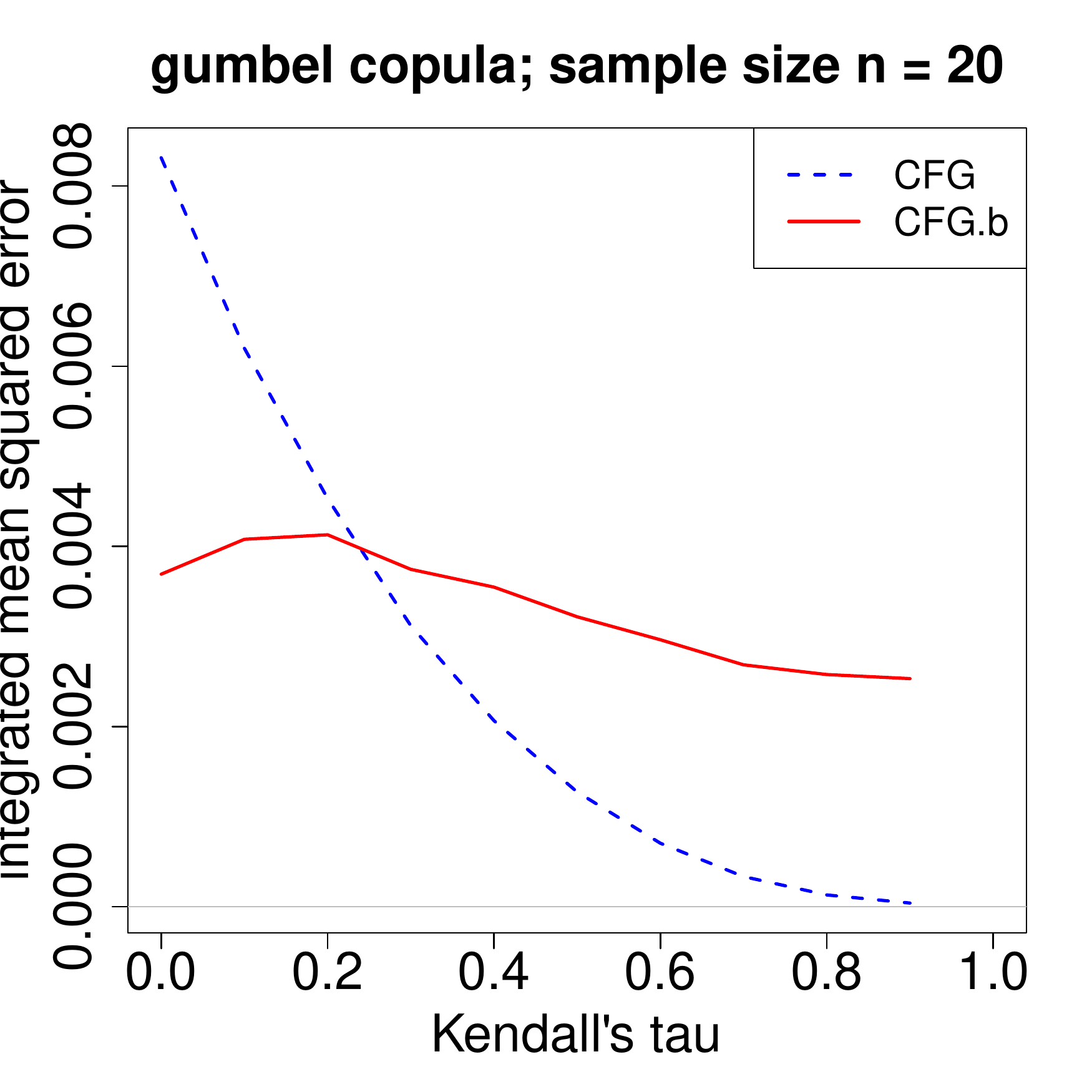}&
\includegraphics[width=0.32\textwidth]{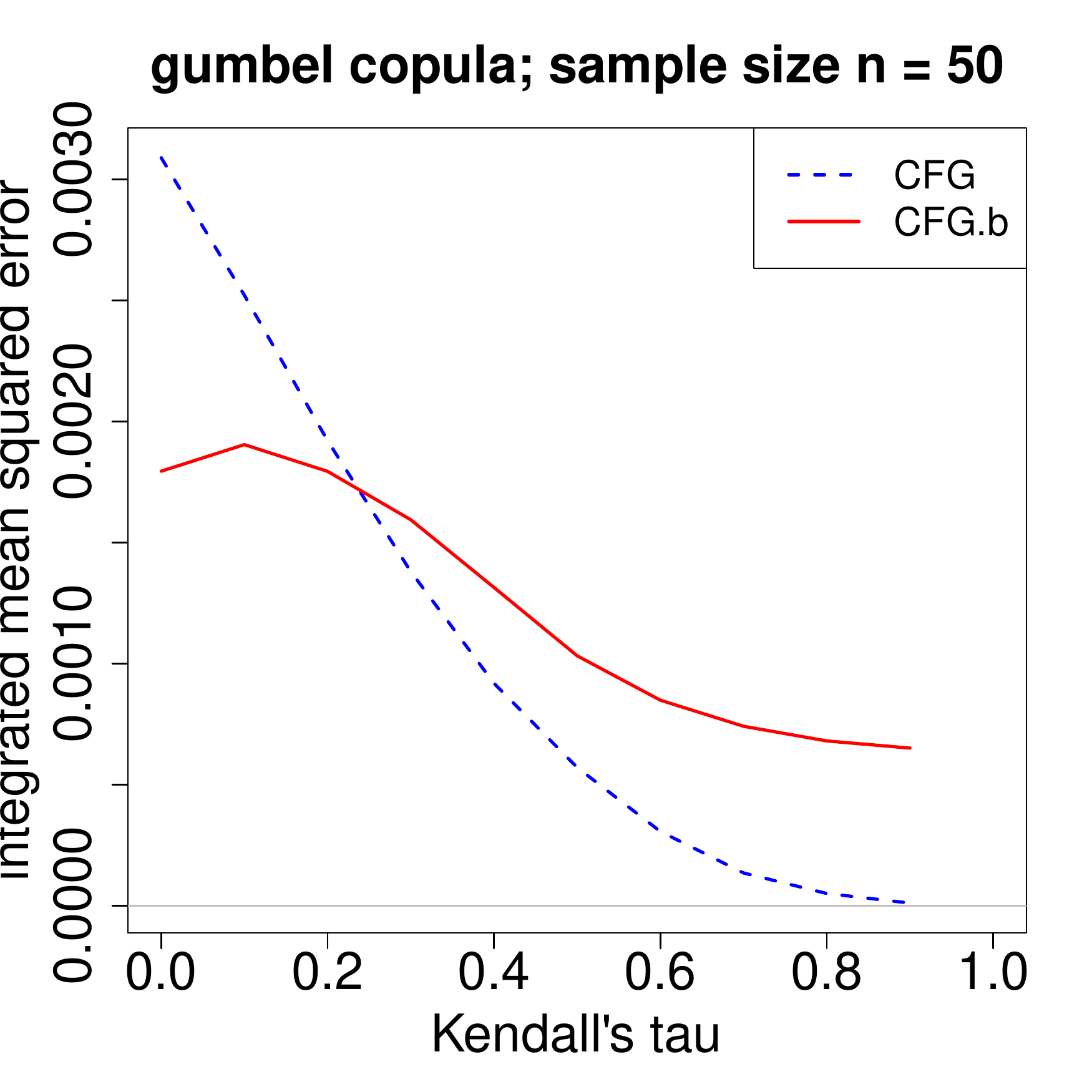}&
\includegraphics[width=0.32\textwidth]{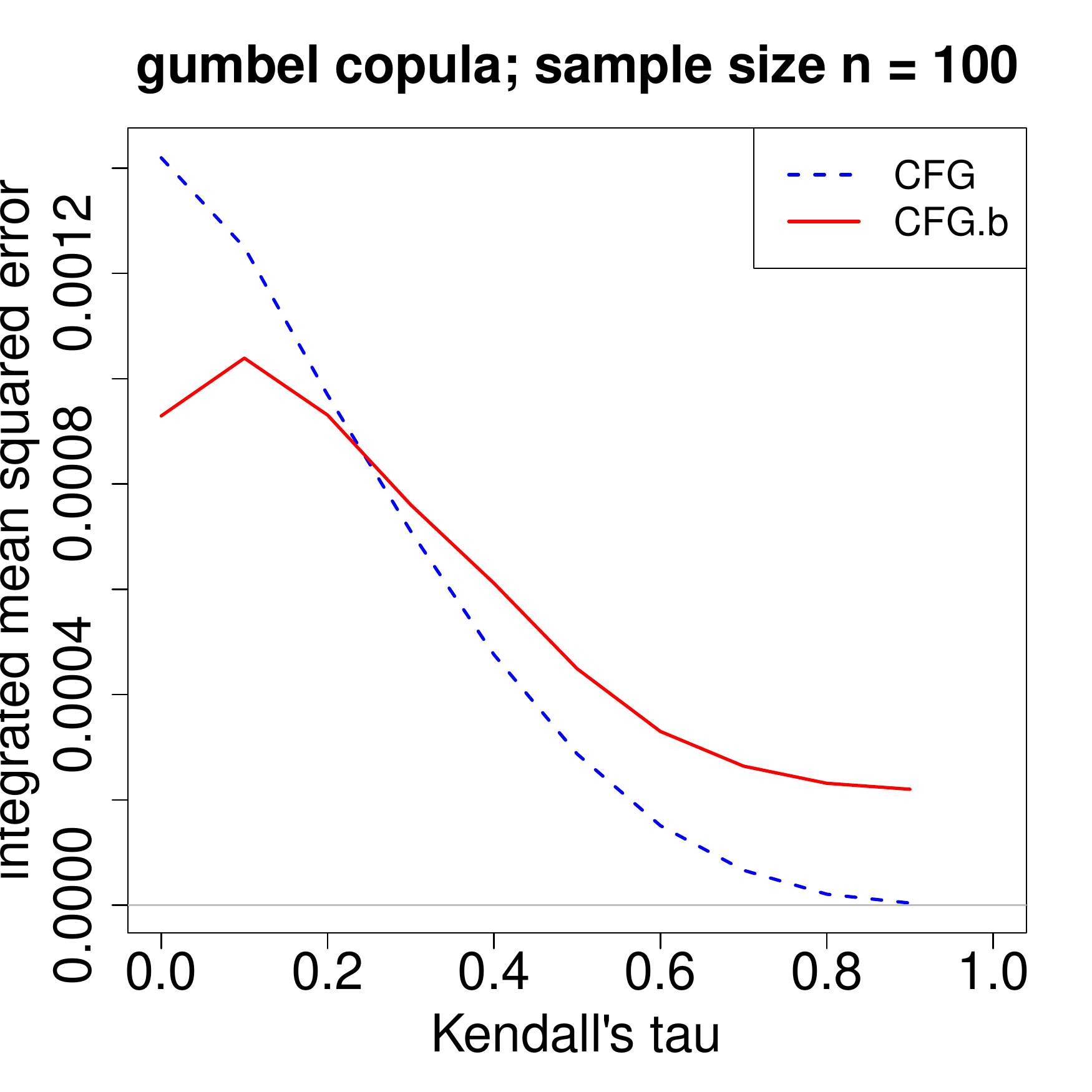}\\
\includegraphics[width=0.32\textwidth]{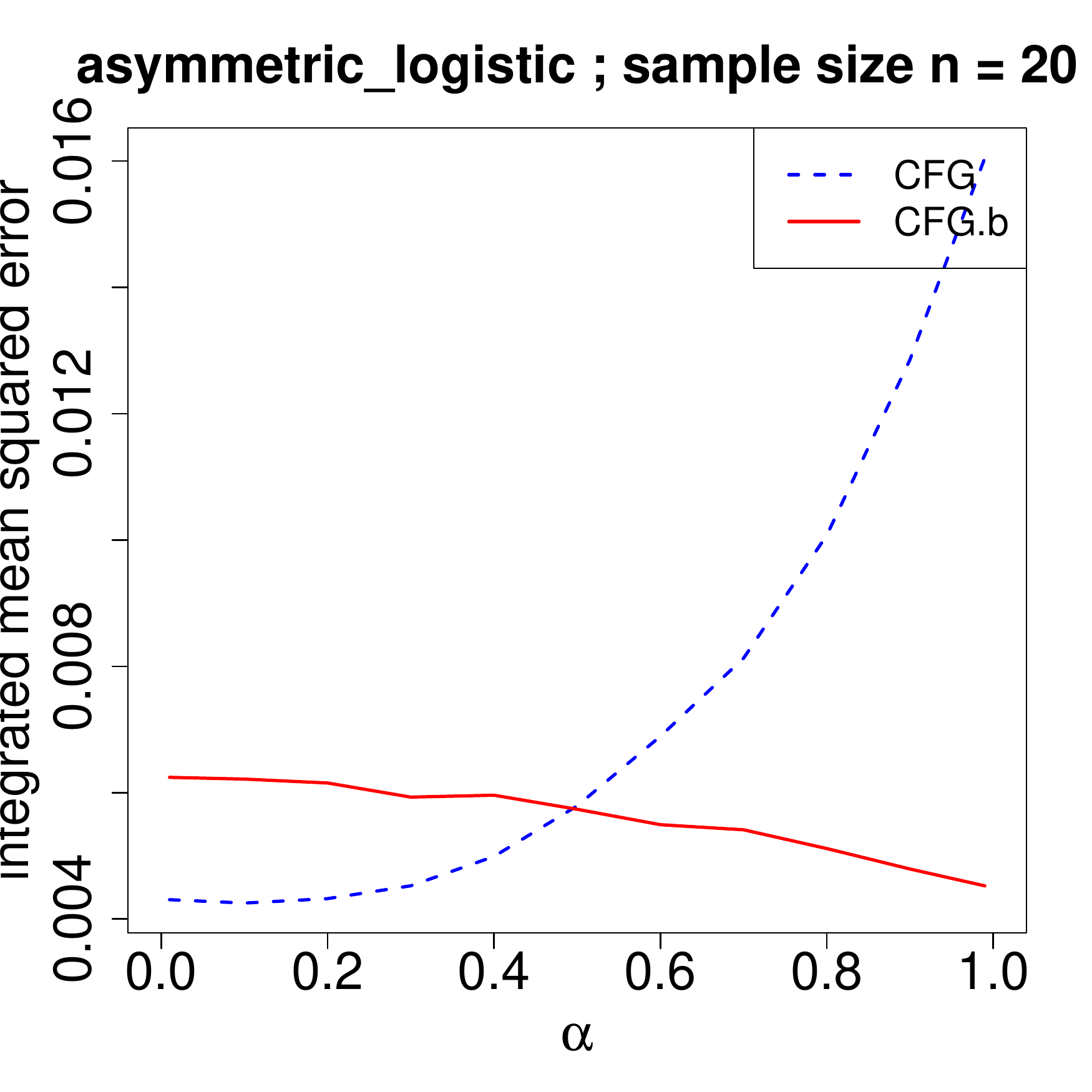}&
\includegraphics[width=0.32\textwidth]{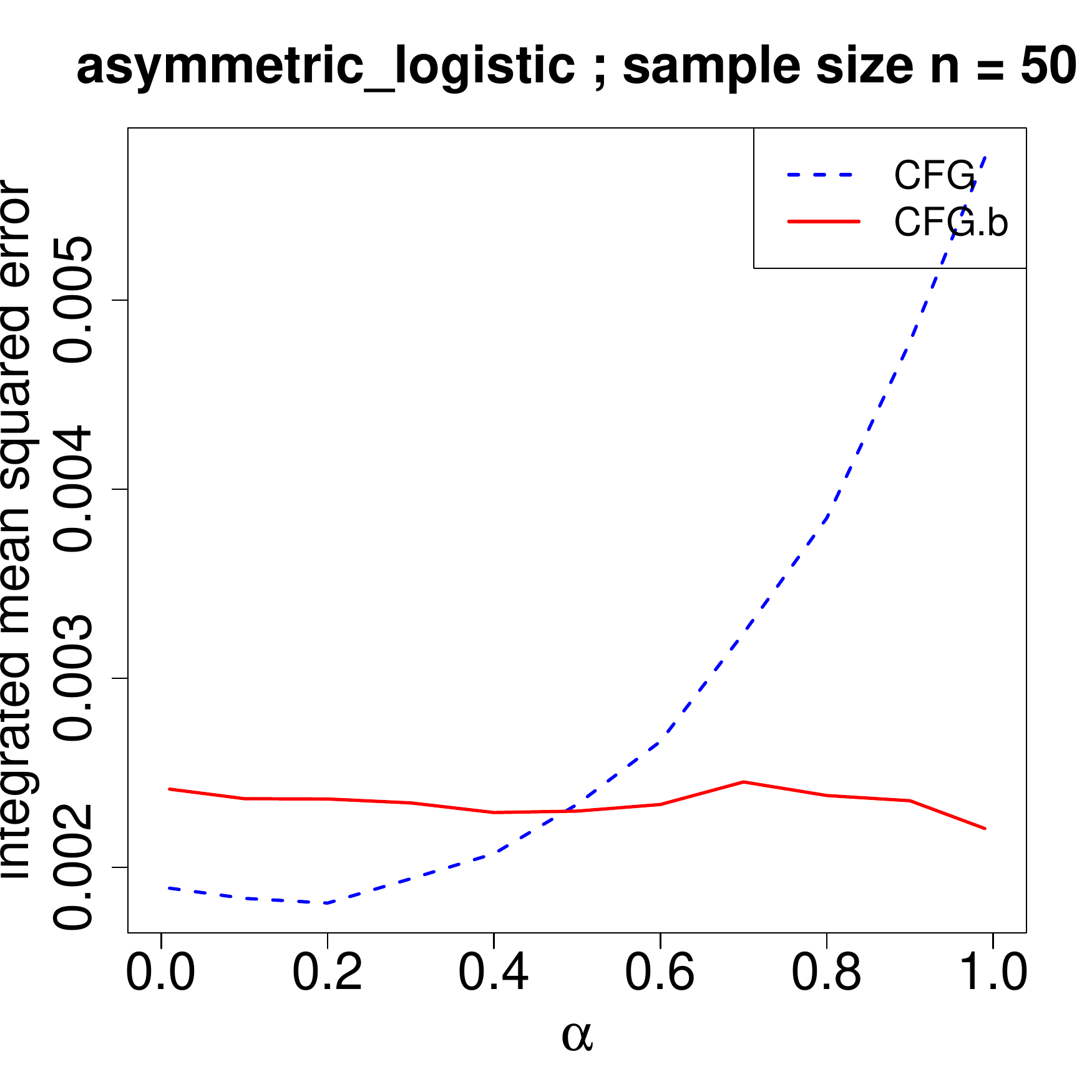}&
\includegraphics[width=0.32\textwidth]{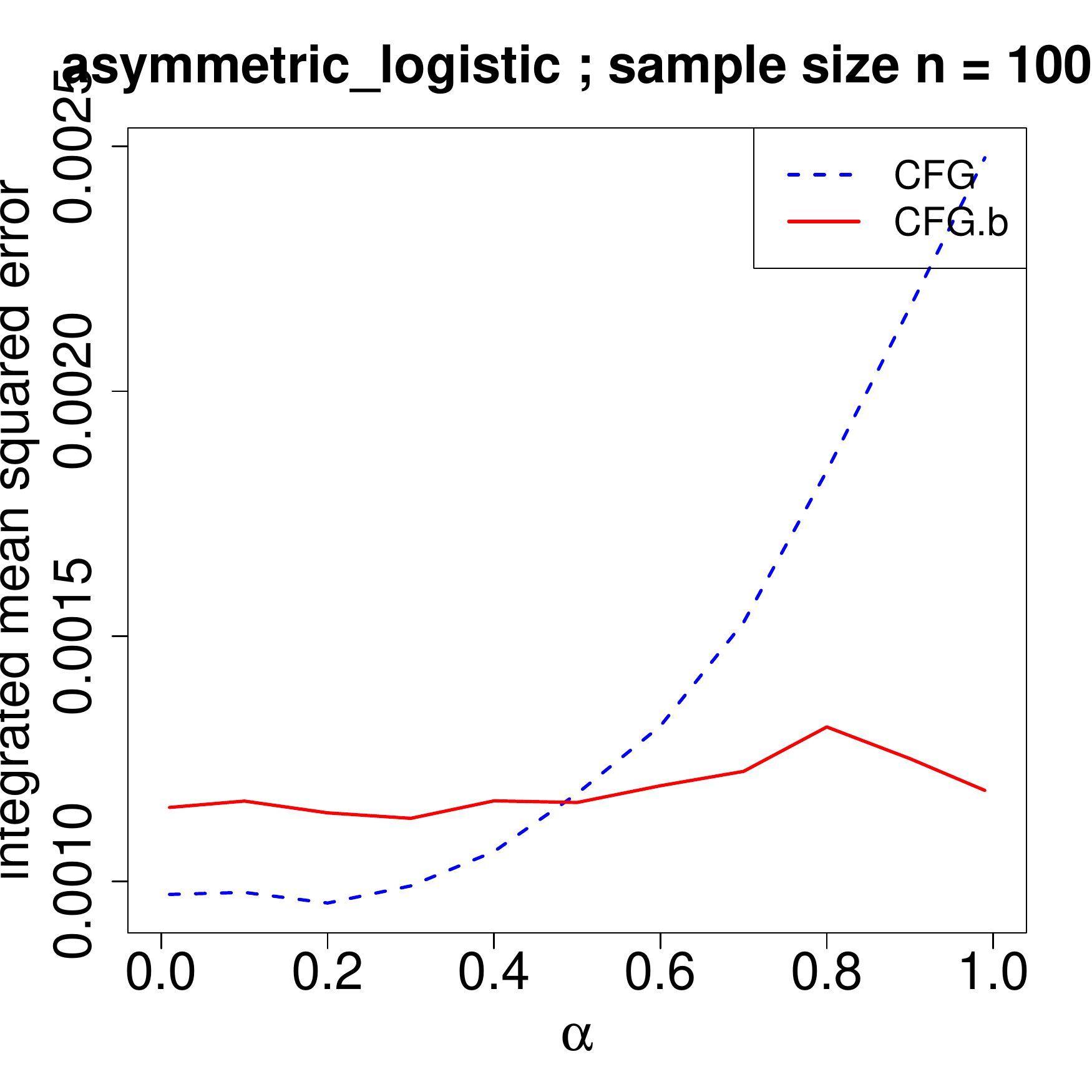}\\
\includegraphics[width=0.32\textwidth]{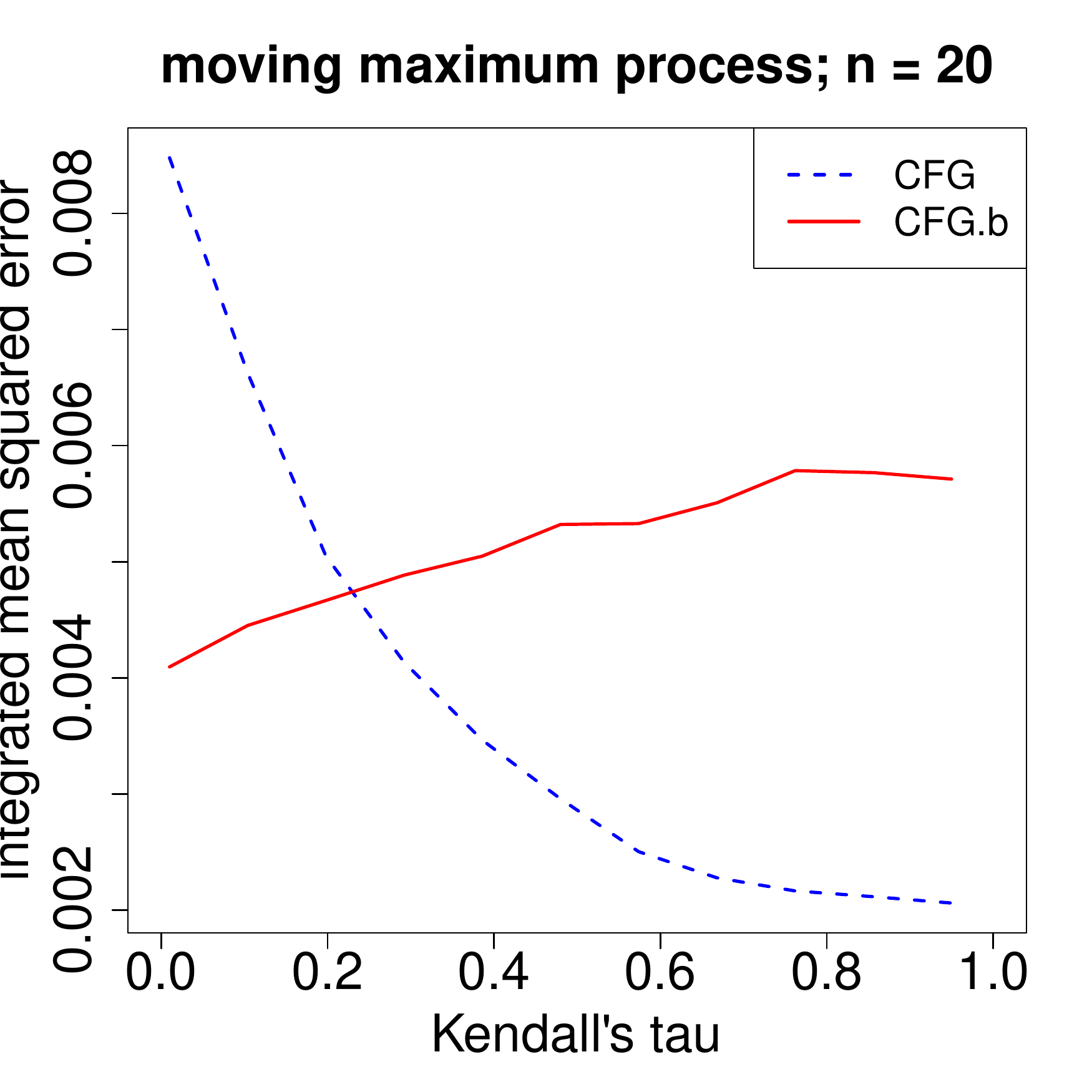}&
\includegraphics[width=0.32\textwidth]{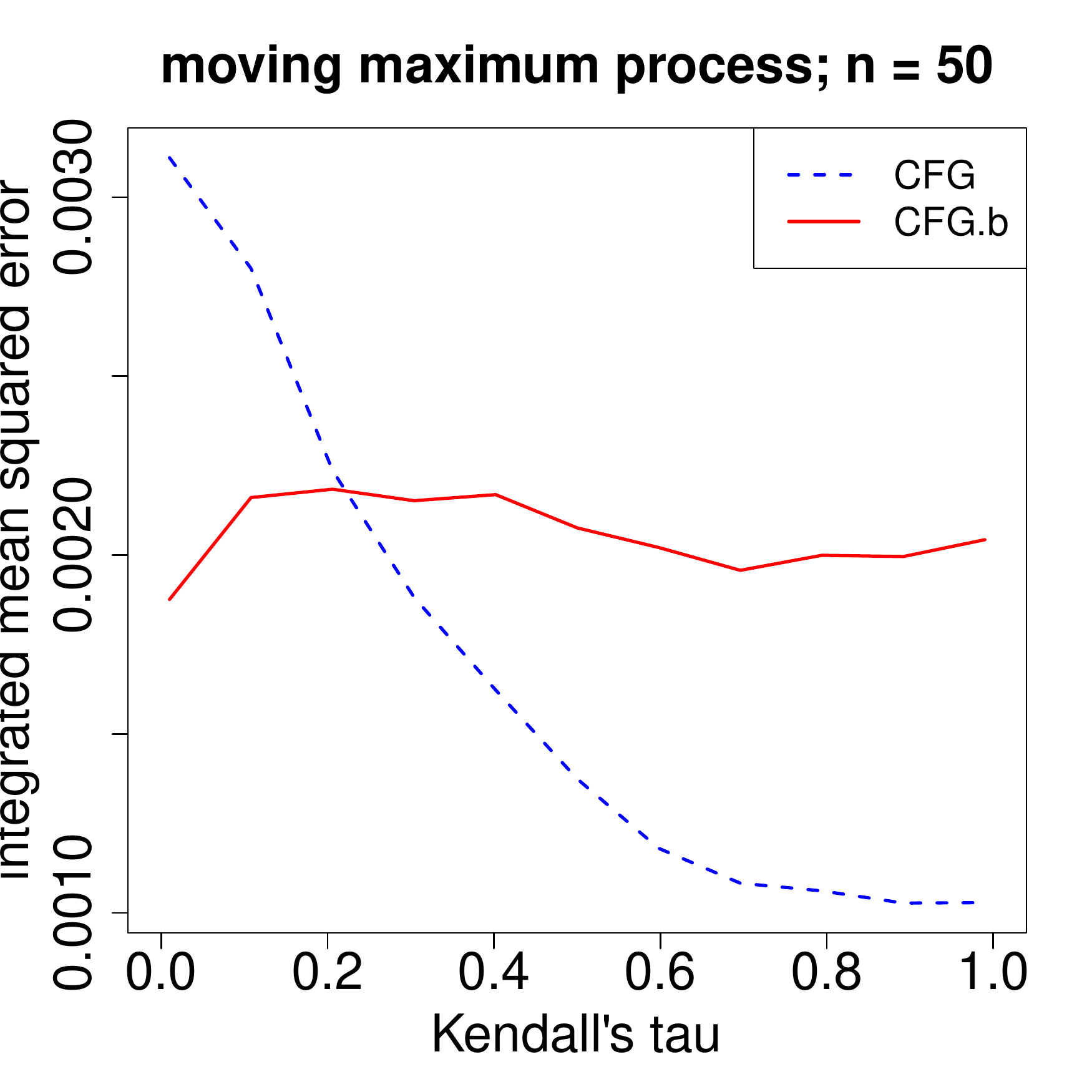}&
\includegraphics[width=0.32\textwidth]{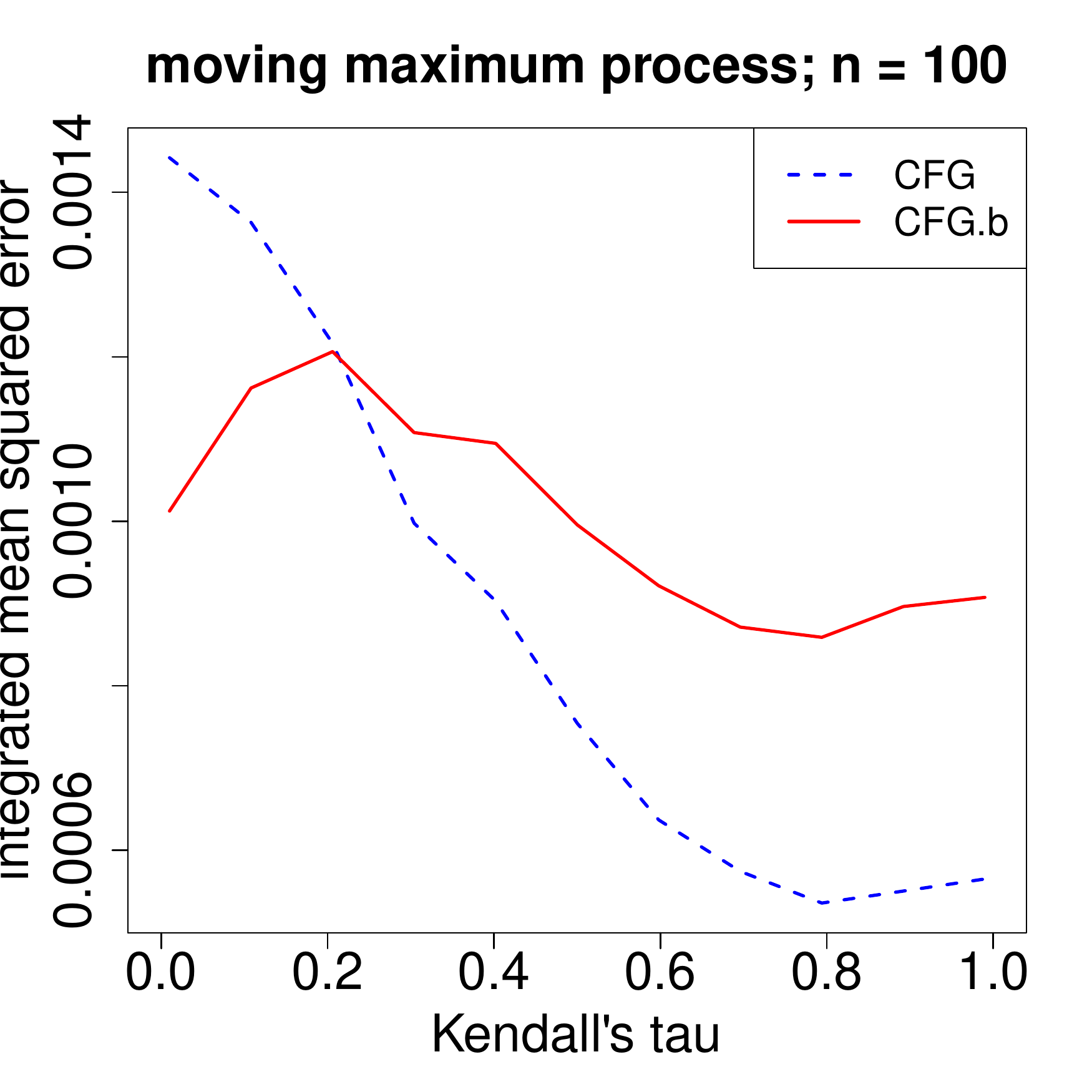}
\end{tabular}
\end{center}
\caption{\label{fig:Pickands}Integrated mean squared error (vertical axis) of the endpoint-corrected CFG-estimator \eqref{eq:CFG:c} (dashed, blue) and the empirical beta variant \eqref{eq:CFG:b} (solid, red) based on samples of the data-generating process (M1), (M2) and (M3) (top to bottom) for various choices of the parameter $\alpha$ or $\tau = 1-\alpha$. Each point is based on $10\,000$ random samples of size $n \in \{20, 50, 100\}$ (left to right).}
\end{figure}

\begin{figure}[h!]
\begin{center}
\begin{tabular}{@{}ccc}
\includegraphics[width=0.32\textwidth]{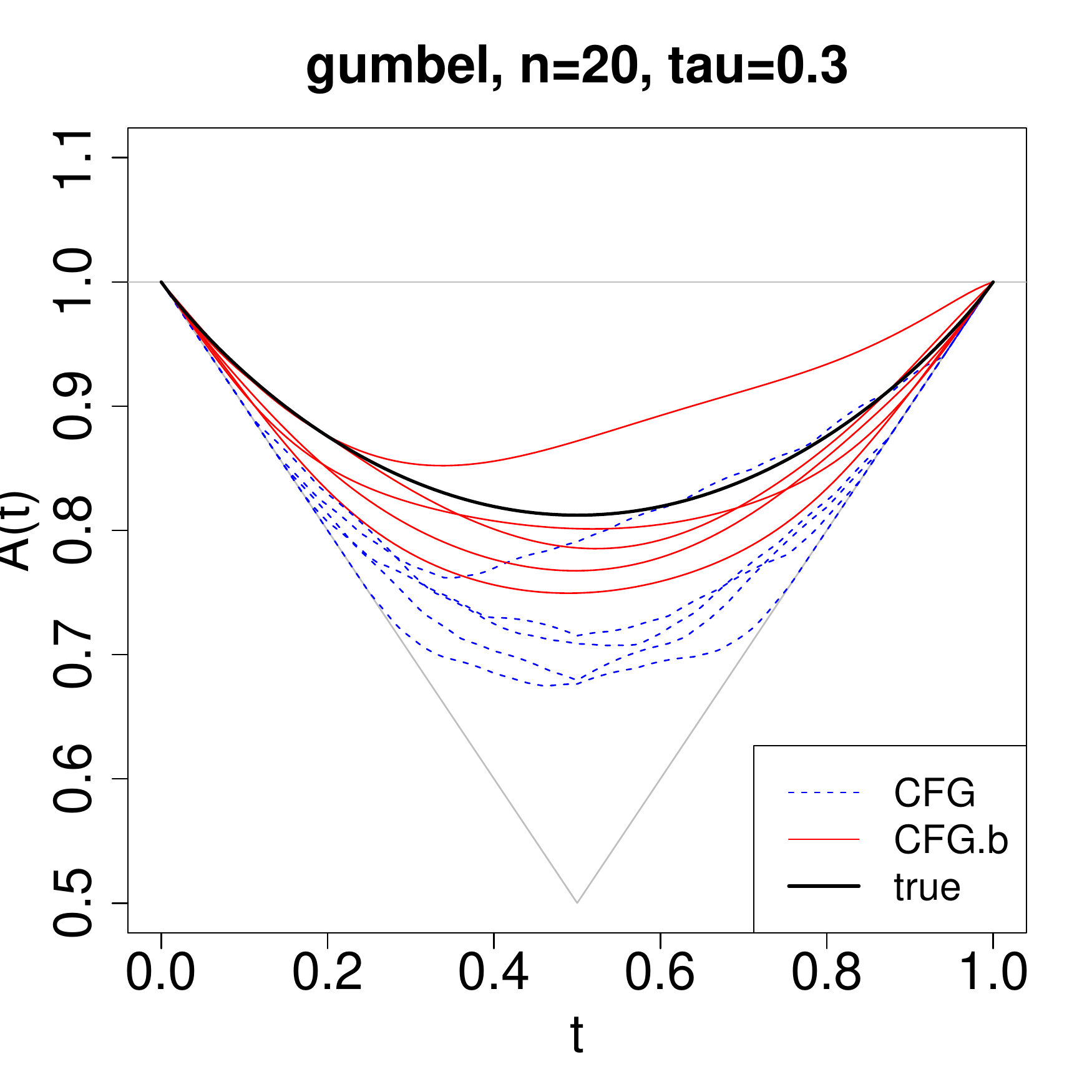}&
\includegraphics[width=0.32\textwidth]{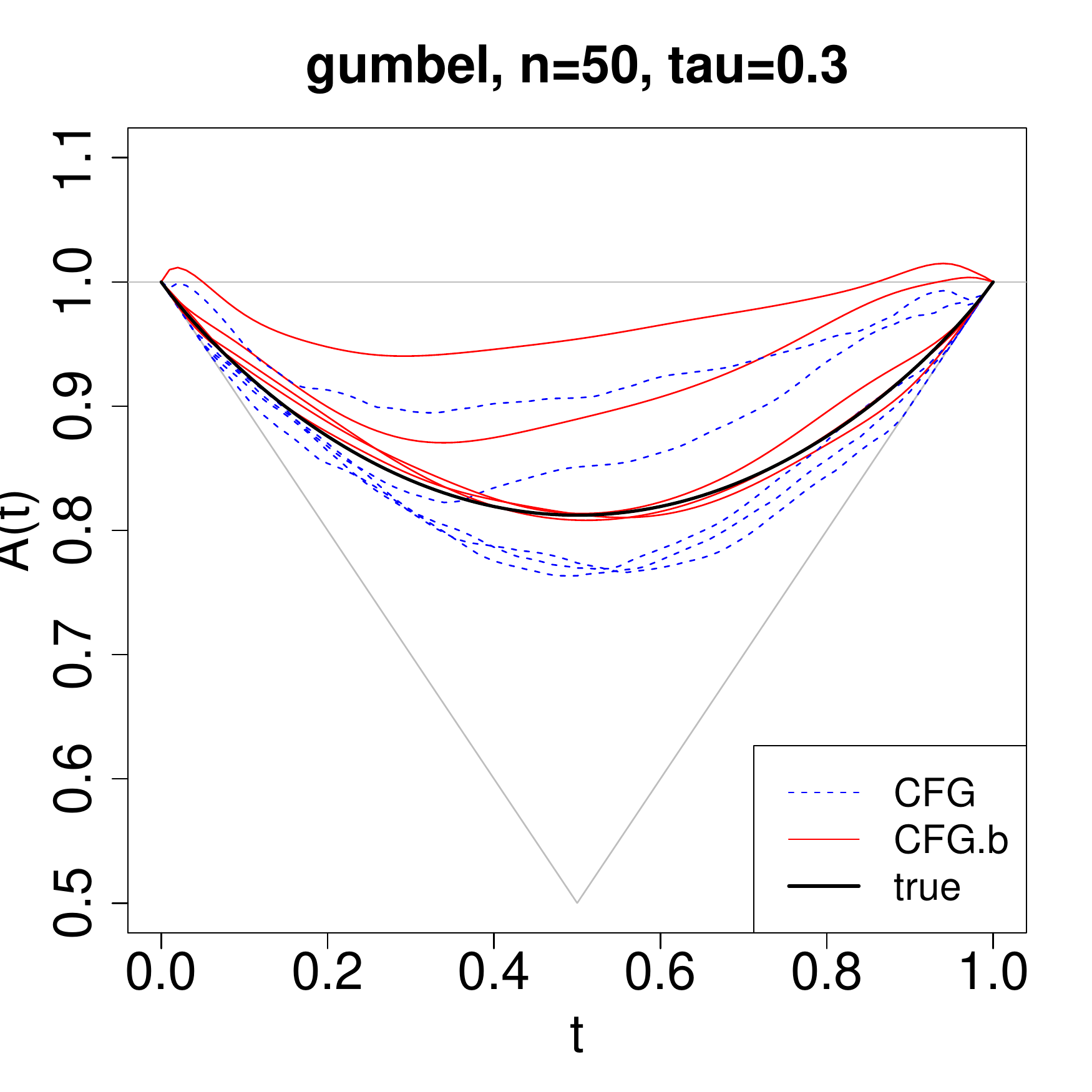}&
\includegraphics[width=0.32\textwidth]{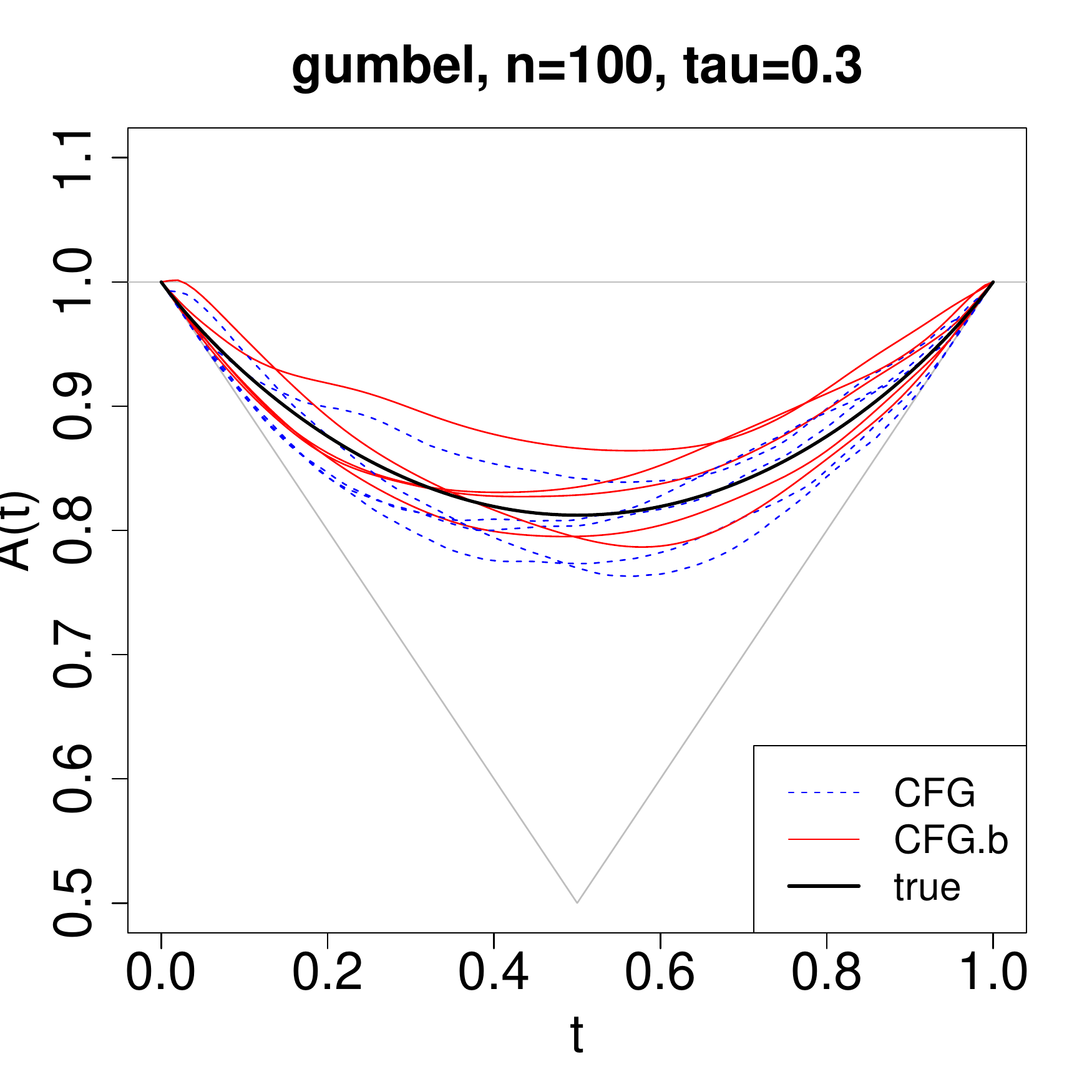}\\
\includegraphics[width=0.32\textwidth]{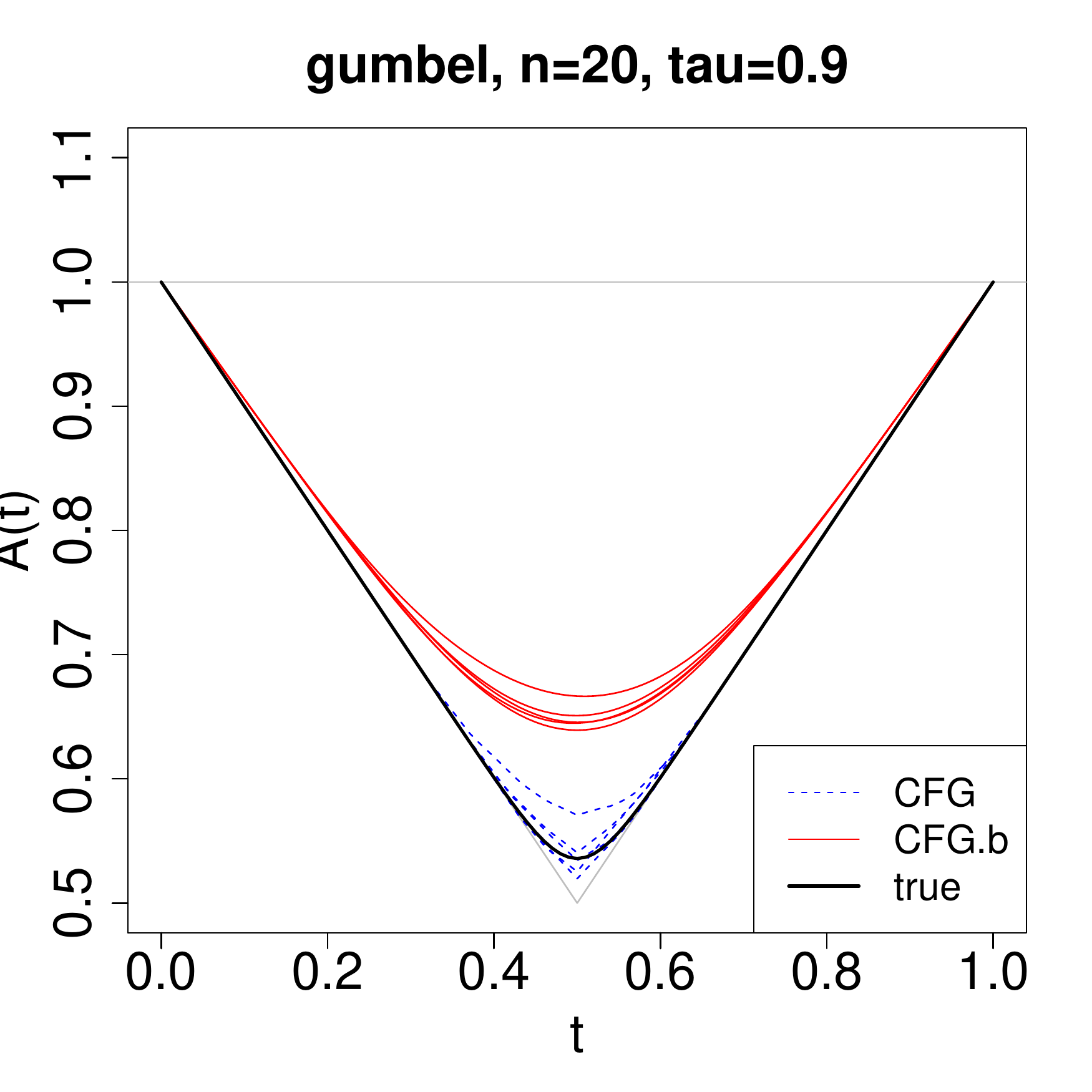}&
\includegraphics[width=0.32\textwidth]{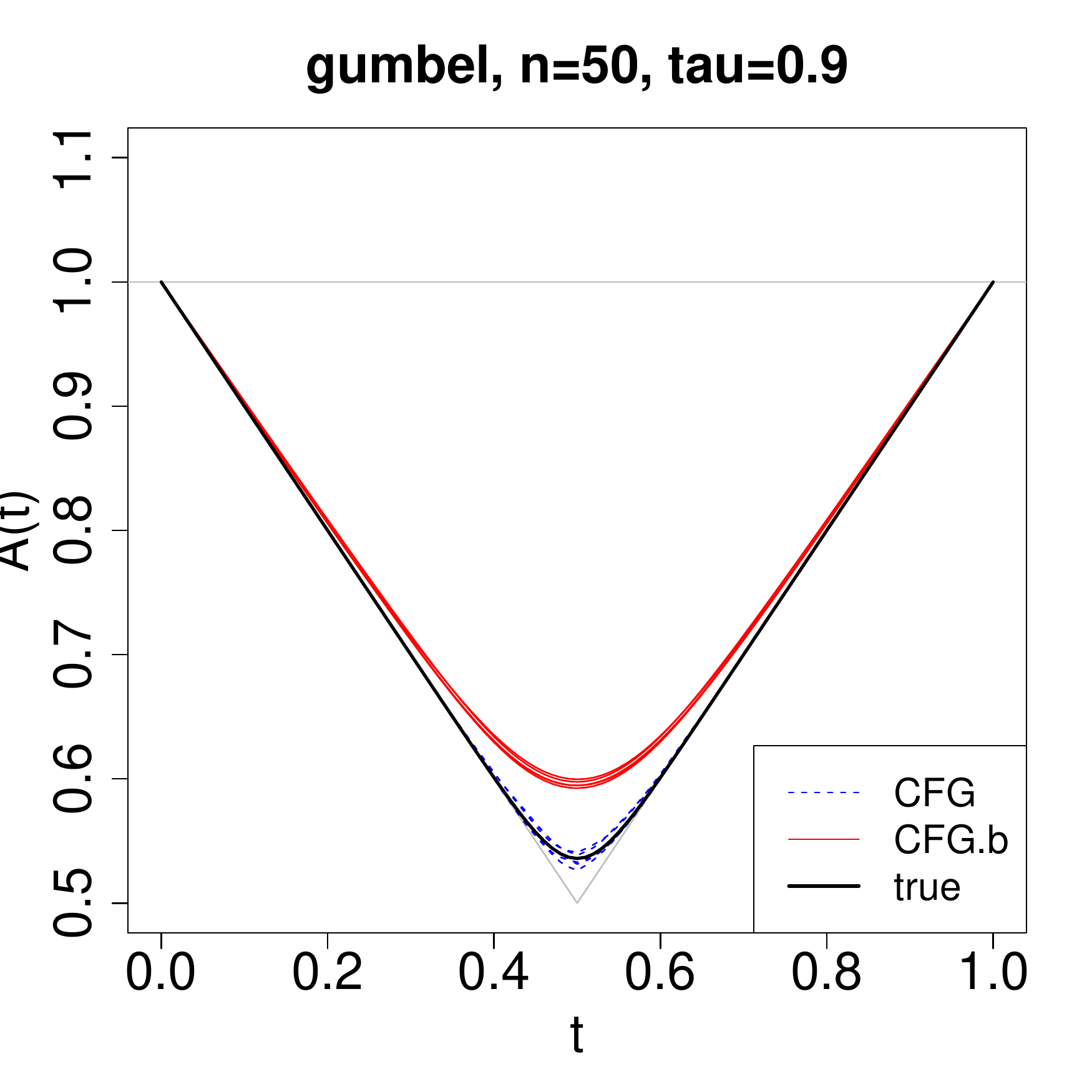}&
\includegraphics[width=0.32\textwidth]{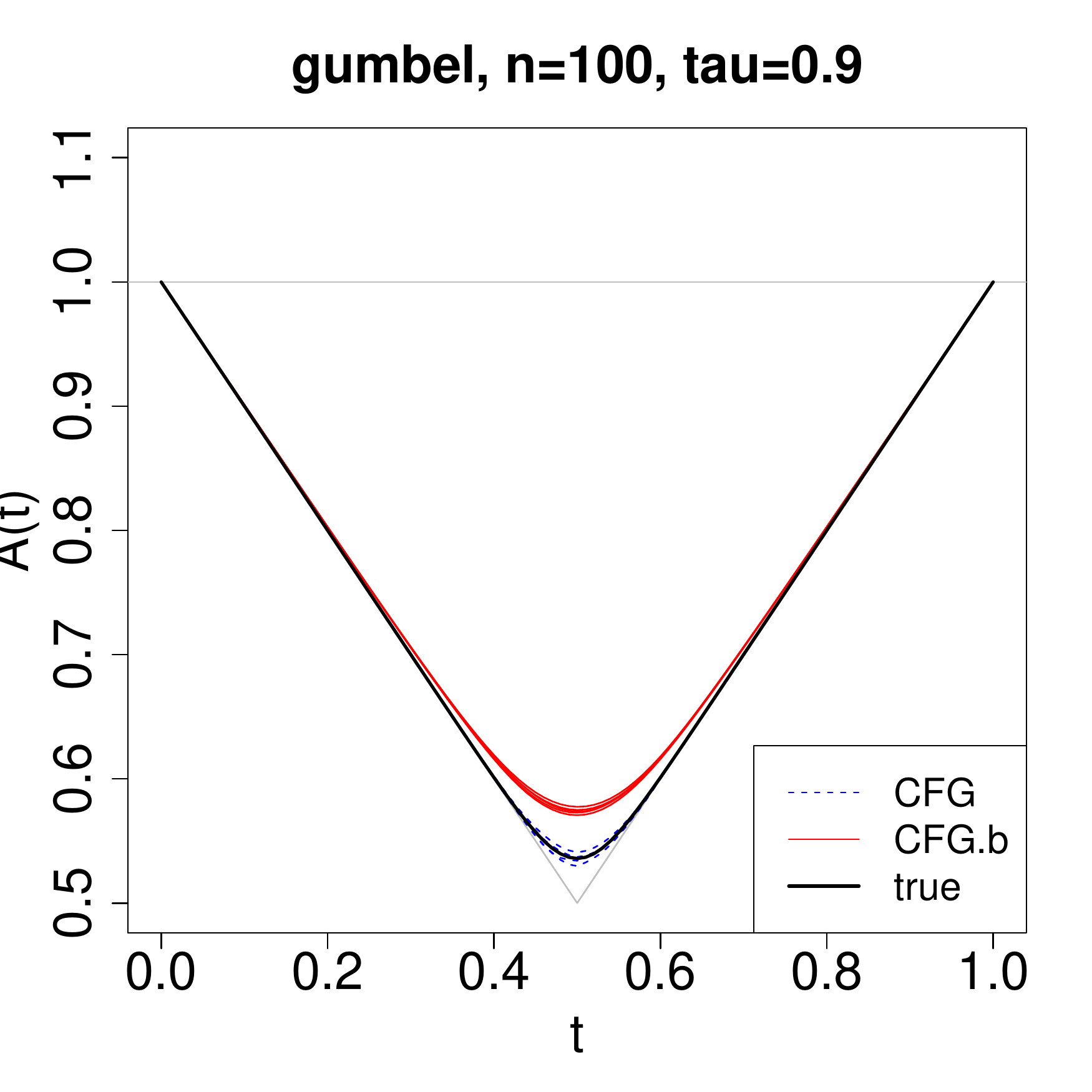}
\end{tabular}
\end{center}
\caption{\label{fig:Pickands:A}Plots of trajectories of the CFG-estimator \eqref{eq:CFG:c} (dashed blue) and the empirical beta variant \eqref{eq:CFG:b} (solid red) of the Pickands dependence function (solid black) based on samples from the Gumbel copula with Kendall's $\tau \in \{0.3, 0.9\}$ (top to bottom) and $n \in \{20, 50, 100\}$ (left to right).}
\end{figure}

\section{Proof of Theorem~\ref{thm:main}}
\label{sec:proof}

Recall the empirical copula process $\hat{\Cb}_n = \sqrt{n} (\hat{C}_n - C)$ and the empirical beta copula process $\hat{\Cb}_n^\beta = \sqrt{n} (\Cnb - C)$. The link between the empirical copula $\hat{C}_n$ and the empirical beta copula $\Cnb$ is given in \eqref{eq:empCop2empBetaCop}. In the derivation of the limit of the weighted empirical beta copula process the following decomposition plays a central role:
\begin{multline}\label{eq:decomp}
\frac {\Cb_n^\beta(\vect u )}{g(\vect u)^\omega}
 ~=~
 \frac{\hat{\Cb}_n(\vect u)}{g(\vect u )^\omega}\int_{[0,1]^d} \frac{g(\vect w)^\omega}{g(\vect u)^\omega} \diff \mu_{n,\vect u} (\vect w) \\
 ~+ ~
  \int_{[0,1]^d} \left\{ \frac{\hat {\Cb}_n(\vect w)}{g(\vect w )^\omega} - \frac{\hat{\Cb}_n(\vect u)}{g(\vect u)^\omega} \right\} \frac{g(\vect w)^\omega}{g(\vect u)^\omega} \diff \mu_{n,\vect u} (\vect w)
~ +~
\int_{[0,1]^d} \sqrt n \frac{C(\vect w) - C(\vect u)}{g(\vect u)^\omega} \diff \mu_{n,\vect u} (\vect w).
\end{multline}
It is reasonable to assume that the last two terms on the right-hand side vanish as $n\to \infty$. Indeed, the measure $\mu_{n,\vect u}$ concentrates around its mean $\vect u$, if the sample size grows, and both integrands are small if $\vect w$ is close to $\vect u$. By the same reason, the integral in the first term should be close to one. The decomposition can thus be used to obtain weak convergence of $\Cb_n^\beta / g^\omega$ on the interior of the unit cube. The boundary of the unit cube has to be treated separately. 

The case $\omega = 0$ corresponds to the unweighted case, so we assume henceforth that $0 < \omega < 1/2$. Fix a scalar $\gamma$ such that $1 / \{2(1-\omega)\} < \gamma < 1$. Consider the abbreviations $\{ g \ge n^{-\gamma} \} = \{ \vect v \in [0, 1]^d \mid g(\vect v) \ge n^{-\gamma} \}$ and similarly $\{ g < n^{-\gamma} \}$. By Lemma~\ref{lem:boundary}, we have
\begin{align*}
  \Cb_n^\beta/g^\omega
  &=
  {\Cb_n^\beta/g^\omega \ind_{\{ g \geq n^{-\gamma} \}}} + \Cb_n^\beta/g^\omega \ind_{ \{ g < n^{-\gamma} \} }  \\
  &= 
  {\Cb_n^\beta/g^\omega \ind_{\{ g \geq n^{-\gamma} \}}} + \oh(1),
  \qquad n \to \infty, \quad \text{a.s.}
\end{align*}
The three terms on the right-hand side of \eqref{eq:decomp} are treated in  Lemmas~\ref{lem:bias}, \ref{lem:int} and \ref{lem:bias2}. We find
\begin{equation}
\label{eq:CbnhatCbn}
  \Cb_n^\beta/g^\omega
  =
  \hat{\Cb}_n/g^\omega \ind_{ \{ g \geq n^{-\gamma} \} } (1 + \oh(1)) + \oh_{\Prob}(1),
  \qquad n \to \infty.
\end{equation}

Recall $\vect U_i = (U_{i,1}, \ldots, U_{i,d})$ with $U_{i,j} = F_j(X_{i,j})$. The empirical distribution function and the empirical process associated to the unobservable sample $\vect U_1, \ldots, \vect U_n$ are
\begin{align*}
  C_n(\vect u) 
  &=  \frac{1}{n} \sum_{i=1}^n \ind_{ \{\vect U_i \le \vect u\} }, &
  \alpha_n(\vect u) 
  &= \sqrt n \{ C_n(\vect u) - C(\vect u) \},
\end{align*}
respectively, for $\vect u \in [0, 1]^d$. Consider the process 
\[
  \bar \Cb_n(\vect u) 
  = 
  \alpha_n(\vect u) 
  - \sum_{j=1}^d \dot C_j(\vect u) \, \alpha_n(1,\dots,1,u_j,1,\dots,1),
  \qquad \vect u \in [0, 1]^d,
\]
with $u_j$ appearing at the $j$-th coordinate. Note the slight but convenient abuse of notation in the definition of $\bar{\Cb}_n$: if $\vect u$ is such that $u_j \in \{0, 1\}$, then $\alpha_n(1,\dots,1,u_j,1,\dots,1) = 0$ almost surely, so that the fact that for such $\vect u$, the partial derivative $\dot{C}_j(\vect u)$ has been left undefined in Condition~\ref{cond:second} plays no role.

By \eqref{eq:CbnhatCbn} above and by Theorem~2.2 in \cite{BerBucVol17} (see also Remark~\ref{rem:proof} below),
\begin{align*}
  \Cb_n^\beta/g^\omega
  &=
  \{ \bar{\Cb}_n/g^\omega + \oh_{\Prob}(1) \} \ind_{ \{ g \geq n^{-\gamma} \} } (1 + \oh(1)) + \oh_{\Prob}(1) \\
  &=
  {\bar{\Cb}_n/g^\omega \ind_{ \{ g \geq n^{-\gamma} \} }} + \oh_{\Prob}(1),
  \qquad n \to \infty.
\end{align*}
In view of Lemma~4.9 in \cite{BerBucVol17}, the indicator function can be omitted, and, applying Theorem~2.2 in the same reference again, we obtain
\[
  \Cb_n^\beta/g^\omega
  =
  \bar{\Cb}_n/g^\omega
  + \oh_{\Prob}(1)
  \dto
  \Cb_C/g^\omega,
  \qquad n \to \infty,
\]
as required. This finishes the proof of Theorem~\ref{thm:main}.

\begin{remark}
\label{rem:proof}
Some of the results in \cite{BerBucVol17} have to be adapted to the present situation.
\begin{itemize}
\item
In the latter reference, the pseudo-observations are defined as $\hat U_{i,j} = (n+1)^{-1} R_{i,j}$ rather than $n^{-1} R_{i,j}$. However, this does not affect the asymptotics, since the difference of the two empirical copulas is at most $d/n$, almost surely. For $\vect u \in \{  g \geq n^{-\gamma} \}$, this modification makes a difference of the order $\Oh_\Prob(n^{\gamma\omega+1/2-1}) = \oh_\Prob(1)$, as $n \to \infty$.
\item
In Theorem~2.2 in \cite{BerBucVol17}, the approximation of $\hat \Cb_n$ by $\bar \Cb_n$ is stated on the interior of the set $ [c/n,1-c/n]^d$ for any $c \in (0,1)$. But it can be seen in the proof of the latter statement that the result can be easily extended to the set $\{  g \geq c/n \}$. See Section~\ref{sec:BerBucVol17} below for details.
\end{itemize}
\end{remark}

\section{Auxiliary results}
\label{sec:aux}

Throughout and unless otherwise stated, we assume the conditions of Theorem~\ref{thm:main}.

\subsection{Negligibility of the boundary regions}

\begin{lemma}
\label{lem:boundary}
For $\gamma > 1 / \{2(1-\omega)\}$, we have
\[
  \sup_{\vect u \in \{g \leq n^{-\gamma}\}} \lvert \Cb_n^\beta(\vect u)/g(\vect u)^\omega \rvert = \oh(1),
  \qquad n \to \infty,
  \quad \text{a.s.}
\]
\end{lemma}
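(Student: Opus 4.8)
The plan is to turn the claim into a deterministic, sample-free estimate by exploiting that, almost surely, both $\Cnb$ and $C$ are genuine copulas. On the part of $\{g \le n^{-\gamma}\}$ where $g(\vect u)=0$ the statement is free: if $u_j=0$ for some $j$ then every copula vanishes, while if $u_k=1$ for all $k\ne j$ then every copula equals its $j$-th margin $u_j$. In either case $\Cnb(\vect u)=C(\vect u)$, hence $\Cb_n^\beta(\vect u)=0$ and the ratio is $0$ by the convention fixed before Theorem~\ref{thm:main}. It therefore suffices to control the supremum over $\{0<g\le n^{-\gamma}\}$.

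On that set I would invoke the Fr\'echet--Hoeffding inequalities: for any copula $D$ one has $\max\{0,\sum_j u_j-(d-1)\}\le D(\vect u)\le \min_j u_j$, so $C(\vect u)$ and $\Cnb(\vect u)$ lie in a common interval and
\[
  \lvert \Cnb(\vect u)-C(\vect u)\rvert \le \Delta(\vect u) := \min_j u_j-\max\Bigl\{0,\textstyle\sum_j u_j-(d-1)\Bigr\}.
\]
Because $\Cnb$ is a copula almost surely --- this is exactly where the tie-excluding hypothesis is used --- the bound holds simultaneously for every $\vect u$ and every $n$ on one event of full probability, which is what will make the conclusion almost sure rather than merely in probability.

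The crux is the purely geometric inequality $\Delta(\vect u)\le (d-1)\,g(\vect u)$ for all $\vect u\in[0,1]^d$, linking the width of the Fr\'echet--Hoeffding band to the weight function. I would prove it by setting $v_j=1-u_j$ with decreasing order statistics $v_{[1]}\ge\cdots\ge v_{[d]}$ and splitting on the sign of $\sum_j u_j-(d-1)$. When $\sum_j v_j<1$ one verifies $1-v_j\ge\max_{k\ne j}v_k$ for all $j$, whence $g=\min_j\max_{k\ne j}v_k=v_{[2]}$, while $\Delta=\sum_{k\ge 2}v_{[k]}\le(d-1)v_{[2]}$, so the bound is immediate. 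When $\sum_j v_j\ge1$ a short computation gives the clean identity $g=\min\{1-v_{[1]},v_{[2]}\}=\min\{\Delta,v_{[2]}\}$, so either $g=\Delta$ (and the bound is trivial) or $g=v_{[2]}$, in which case $\Delta=1-v_{[1]}\le\sum_{k\ge 2}v_{[k]}\le(d-1)v_{[2]}$ finishes the argument. Granting this, on $\{0<g\le n^{-\gamma}\}$ and using $1-\omega>0$ together with $g\le n^{-\gamma}$,
\[
  \frac{\lvert\Cb_n^\beta(\vect u)\rvert}{g(\vect u)^\omega}
  = \frac{\sqrt n\,\lvert\Cnb(\vect u)-C(\vect u)\rvert}{g(\vect u)^\omega}
  \le (d-1)\sqrt n\,g(\vect u)^{1-\omega}
  \le (d-1)\,n^{1/2-\gamma(1-\omega)}.
\]
The hypothesis $\gamma>1/\{2(1-\omega)\}$ enters only here, making the exponent negative, so the right-hand side is a deterministic $\oh(1)$ uniform in $\vect u$, and the lemma follows.

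I expect the geometric inequality $\Delta\le(d-1)g$ to be the only real obstacle: it is the one step that forces one to unwind the nested $\min$/$\max$ structure of $g$ and match it, regime by regime, against the two branches of the Fr\'echet--Hoeffding width. The remaining ingredients --- the trivial treatment of the zero-set, the Fr\'echet--Hoeffding sandwich, and the final power count in $n$ --- are routine once that inequality is in hand. It is also reassuring to record the easy reverse bound $g\le\Delta$, which confirms that $g$ and $\Delta$ are comparable, so no better rate than the one above could be extracted from this deterministic estimate.
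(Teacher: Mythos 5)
Your proof is correct and reaches the same deterministic rate $n^{1/2-\gamma(1-\omega)}$ as the paper's, but by a genuinely different mechanism for the key pointwise bound. The paper likewise starts from the fact that $\Cnb$ is almost surely a genuine copula, but then argues case by case according to which term realizes the minimum in $g$: when $g(\vect u)=u_1$ it uses only the upper Fr\'echet--Hoeffding bound $\Cnb(\vect u)\le u_1$, $C(\vect u)\le u_1$, and when $g(\vect u)=1-u_1$ it uses the marginal constraint $\Cnb(1,u_2,1,\dots,1)=u_2$ together with Lipschitz continuity to get $\lvert \Cnb(\vect u)-u_2\rvert\le\sum_{j\ne 2}(1-u_j)\le (d-1)(1-u_1)$. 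You instead sandwich both copulas in the full Fr\'echet--Hoeffding band and prove the single geometric inequality $\Delta(\vect u)\le (d-1)\,g(\vect u)$. I checked that inequality: with $v_j=1-u_j$ one indeed has $g(\vect u)=\min\{1-v_{[1]},v_{[2]}\}$ (since a minimum of pairwise minima is the minimum over the pooled set), and your two regimes $\sum_j v_j<1$ and $\sum_j v_j\ge 1$ work out exactly as you indicate, so the bound $(d-1)\sqrt{n}\,g(\vect u)^{1-\omega}\le (d-1)n^{1/2-\gamma(1-\omega)}$ follows and the hypothesis on $\gamma$ makes the exponent negative. Your route buys a case-free, unified final estimate plus the structural two-sided comparison $g\le\Delta\le(d-1)g$, at the cost of proving the band-width inequality, which plays the role that the Lipschitz/marginal argument plays in the paper; it also uses the lower Fr\'echet--Hoeffding bound, which the paper never needs. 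Both arguments invoke the tie-excluding hypothesis in the same place (a single full-probability event on which $\Cnb$ is a copula for every $n$, making the conclusion almost sure and uniform in $\vect u$), and both dispose of the zero-set of $g$ via the convention fixed before the statement of the main theorem.
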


\begin{proof}
Let $\gamma > 1 / \{2(1-\omega)\}$ and $\vect u \in \{ g \leq n^{-\gamma}\}$. Without loss of generality, we only need to consider the cases $g (\vect u) = u_1$ and $g(\vect u)= 1-u_1$. The remaining cases can be treated analogously.

Let us start with the case $g(\vect u)=u_1 \leq n^{-\gamma}$. Since $\Cnb$ is a copula almost surely, we have $\Cnb (\vect u) \leq u_1$. This in turn gives us
\[
 \lvert \Cb_n^\beta(\vect u)/g(\vect u)^\omega \rvert 
 \leq 
 \sqrt n \, u_1^{-\omega} \lvert \Cnb(\vect u) + C(\vect u) \rvert 
 \leq 
  2 \sqrt n \, u_1^{1-\omega}
   \leq
    2 n^{1/2 +\gamma\omega -\gamma},
  \qquad \text{a.s.},
\]
an upper bound which vanishes as $n \to \infty$ by the choice of $\gamma$.

Now suppose that $g(\vect u)=1-u_1 \leq n^{-\gamma}$. By the definition of $g(\vect u)$, we can assume without loss of generality that $1-u_j \leq 1-u_1$ for $j=3,\dots ,d$. Again, we will use the fact that $\Cnb$ is a copula almost surely. Note that $\Cnb(1,u_2,1,\dots ,1)= u_2$. Hence, by the Lipschitz continuity of copulas we obtain, almost surely,
\begin{align*}
 \lvert \Cb_n^\beta(\vect u)/g(\vect u)^\omega \rvert 
  &\leq
  \sqrt n (1-u_1)^{-\omega}
  \{ \lvert \Cnb(\vect u) -u_2 \rvert + \lvert C(\vect u)-u_2 \rvert \} \\
  &\leq
  \textstyle{2 \sqrt n (1-u_1)^{-\omega} \sum_{j\ne 2} (1-u_j)} \\
  &\leq 
  \textstyle{2 \sqrt n \sum_{j \ne 2} (1-u_j)^{1-\omega}} \\
  &\leq
  2(d-1) n^{1/2+\gamma\omega-\gamma}
  = \oh(1),
  \qquad n \to \infty.
\end{align*}

The upper bounds do not depend on $\vect u \in \{ g \le n^{-\gamma} \}$, whence the uniformity in $\vect u$.
\end{proof}

\subsection{The three terms in the decomposition (\ref{eq:decomp})}

The following lemma is to be compared with Proposition~3.5 in \cite{SegSibTsu17}. There, a pointwise approximation rate of $\Oh(n^{-1})$ was established. Here, we state a rate which is slightly slower, $\Oh(n^{-1} \log n)$, but uniformly in $\vect u$. 

\begin{lemma}
\label{lem:bias}
If $C$ satisfies Condition \ref{cond:second}, then
\[
  \sup_{\vect u \in [0, 1]^d} 
  \left\lvert 
    \int_{[0, 1]^d} \{ C(\vect w) - C(\vect u) \} \diff \mu_{n, \vect u}(\vect w)
  \right\rvert
  =
  \Oh( n^{-1} \log n ),
  \qquad n \to \infty.
\]
\end{lemma}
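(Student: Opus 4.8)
The plan is to read the integral as a bias and reduce the $d$-dimensional problem to $d$ one-dimensional ones. Since $\mu_{n,\vect u}$ is the law of $\vect S/n=(S_1/n,\dots,S_d/n)$ with independent $S_j\sim\Bin(n,u_j)$, the quantity to bound is $\Exp[C(\vect S/n)]-C(\vect u)$, a multivariate analogue of the Bernstein approximation error. I would first telescope,
\[
  C(\vect w)-C(\vect u)=\sum_{k=1}^d\bigl\{\phi_k(w_k)-\phi_k(u_k)\bigr\},\qquad \phi_k(t):=C(w_1,\dots,w_{k-1},t,u_{k+1},\dots,u_d),
\]
take expectations, and use independence to condition on $(S_1/n,\dots,S_{k-1}/n)$ in the $k$-th summand, turning it into the univariate binomial bias $\Exp[\phi_k(S_k/n)]-\phi_k(u_k)$ of the random section $\phi_k$. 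The advantage of telescoping is that only the diagonal second derivative appears: by Condition~\ref{cond:second}, each $\phi_k$ is continuous on $[0,1]$, $1$-Lipschitz, and twice differentiable on $(0,1)$ with $\lvert\phi_k''(t)\rvert=\lvert\ddot C_{kk}(w_1,\dots,t,\dots,u_d)\rvert\le K/\{t(1-t)\}$, uniformly in the frozen coordinates, even those equal to $0$ or $1$.

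For one such term, write $S=S_k$, $u=u_k$, $\phi=\phi_k$; if $u\in\{0,1\}$ then $S/n=u$ almost surely and the term vanishes, so assume $u\in(0,1)$. A second-order Taylor expansion with integral remainder, $\phi(t)=\phi(u)+\phi'(u)(t-u)+\int_u^t(t-s)\phi''(s)\diff s$, evaluated at $t=S/n$ and averaged, kills the first-order term because $\Exp[S/n]=u$. By Fubini the bias becomes $\int_0^1\phi''(s)\,G_n(u,s)\diff s$ with the nonnegative Peano kernel $G_n(u,s)=\Exp[(S/n-s)^+]$ for $s>u$ and $G_n(u,s)=\Exp[(s-S/n)^+]$ for $s<u$, which integrates to $\int_0^1 G_n(u,s)\diff s=\tfrac12\Var(S/n)=u(1-u)/(2n)$. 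The derivative bound then gives $\lvert\Exp[\phi(S/n)]-\phi(u)\rvert\le K\int_0^1 G_n(u,s)/\{s(1-s)\}\diff s$, so everything reduces to estimating this integral uniformly in $u$.

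The estimate splits into a central region and tails. Near $s\approx u$ one has $1/\{s(1-s)\}\approx 1/\{u(1-u)\}$, so this part is at most a constant times $\{u(1-u)\}^{-1}\int_0^1 G_n(u,s)\diff s=1/(2n)$: the singularity of $\phi''$ is exactly absorbed by the binomial variance. Far from $u$ the kernel $G_n(u,s)$ is exponentially small by Bernstein/Hoeffding concentration of $\Bin(n,u)$, so the factor $1/\{s(1-s)\}$ --- large only near $s\in\{0,1\}$ --- is tamed; the atoms $S\in\{0,n\}$ I would isolate, the $S=0$ term being at most $2Ku(1-u)^n=\Oh(1/n)$ uniformly in $u$, and $S=n$ symmetrically. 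Summing the $d$ terms and taking the outer expectation over the conditioned coordinates preserves uniformity.

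The crux is this uniform control in the boundary zone: one must play the blow-up $1/\{s(1-s)\}$ of $\phi''$ against the vanishing variance $u(1-u)/n$, and cannot afford to replace $\phi''$ by a worst-case constant, which would ruin the rate. The extra logarithmic factor over the pointwise $\Oh(n^{-1})$ bound of \cite{SegSibTsu17} is the modest cost of a convenient uniform estimate in the transition between the central region and the tails: the derivative bound endows $\phi'$ with a logit modulus, $\lvert\phi'(s)-\phi'(u)\rvert\le K\lvert\log\{s/(1-s)\}-\log\{u/(1-u)\}\rvert$, and bounding such a logit difference by $\log n$ on the event $1\le S\le n-1$ (where $S/(n-S)\in[(n-1)^{-1},n-1]$) yields the $\log n$ while keeping every constant free of $\vect u$.
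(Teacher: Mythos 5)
Your proof is correct, and it takes a genuinely different route from the paper's. The paper first discards a boundary layer of width $\eps_n = n^{-1}\log n$ (this reduction is precisely where the logarithm enters), then Taylor-expands in all $d$ coordinates at once, expands each $\dot C_j$ to second order using \emph{all} mixed derivatives $\ddot C_{jk}$, controls the intermediate point by a convexity argument, and finishes with Cauchy--Schwarz plus the reciprocal-binomial-moment estimates of Lemmas~\ref{lem:binom:aux}--\ref{lem:binom}. Your telescoping-and-conditioning step instead reduces everything to $d$ univariate Bernstein-type biases of the random sections $\phi_k$, so that only the diagonal derivatives $\ddot C_{kk}$ of Condition~\ref{cond:second} are needed, and the Peano-kernel identity $\int_0^1 G_n(u,s)\,\diff s = \tfrac12\Var(S/n) = u(1-u)/(2n)$ replaces both the Cauchy--Schwarz step and the auxiliary lemmas on $\Exp[S^{-1}\ind_{\{S\ge1\}}]$. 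Moreover, your central/tail split closes cleanly and uniformly: on $[u/2,(1+u)/2]$ one has $1/\{s(1-s)\}\le 4/\{u(1-u)\}$, so that piece is at most $2/n$; on the left tail, $G_n(u,s)\le s\,\Prob(S/n\le u/2)\le s\,e^{-nu/8}$ gives a contribution at most $u e^{-nu/8}=\Oh(n^{-1})$ uniformly in $u$, and the right tail is symmetric. Consequently your argument actually delivers the uniform rate $\Oh(n^{-1})$, strictly better than the $\Oh(n^{-1}\log n)$ you were asked to prove, and your closing paragraph attributing a $\log n$ to a logit modulus of $\phi'$ is superfluous---it describes a cruder first-order variant that your Peano-kernel bound never needs. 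In short, your route uses less of Condition~\ref{cond:second}, dispenses with the boundary-layer reduction, the mixed second derivatives, and the reciprocal-moment machinery, and sharpens the rate; the only care required (which you correctly flag) is that the Taylor remainder and Fubini steps remain valid when $S/n$ hits $\{0,1\}$, where the factor $\lvert t-s\rvert$ in the kernel tames the singularity of $\phi''$.
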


\begin{proof}
Put $\eps_n = n^{-1} \log n$. First, we show that we can ignore those $\vect u$ for which $u_j \le \eps_n$ for some $j \in \{1, \ldots, d\}$. Indeed, for such $\vect u$, the absolute value in the statement is bounded by
\[
  \int_{[0, 1]^d} w_j \diff \mu_{n, \vect u}(\vect w) + u_j = 2u_j \le 2\eps_n.
\]

Let $\vect u \in [\eps_n, 1]^d$. We show how to reduce the analysis to the case where $\vect u \in [\eps_n, 1-\eps_n]^d$. Let $J = J(\vect u)$ denote the set of indices $j = 1, \ldots, d$ such that $u_j > 1-\eps_n$ and suppose that $J$ is not empty. Consider the vector $\vect e \in \{0, 1\}^d$ which has components $e_j = 1$ for $j \in J$ and $e_j = 0$ otherwise. For $\vect v \in [0, 1]^d$, the vector $\vect v \vee \vect e$ has components $(\vect v \vee \vect e)_j$ equal to $v_j$ if $j \not\in J$ and to $1$ if $j \in J$. Recall that copulas are Lipschitz continuous with respect to the $L^1$ norm with Lipschitz constant $1$. It follows that
\begin{multline*}
    \left\lvert 
      \int_{[0, 1]^d} \{ C(\vect w) - C(\vect u) \} \diff \mu_{n, \vect u}(\vect w)
    \right\rvert
  \le
  \left\lvert 
    \int_{[0, 1]^d} \{ C(\vect w \vee \vect e) - C(\vect u \vee \vect e) \} \diff \mu_{n, \vect u}(\vect w)
  \right\rvert \\
  + \int_{[0, 1]^d} \lvert C(\vect w) - C(\vect w \vee \vect e) \rvert \diff \mu_{n, \vect u}(\vect w)
  + \lvert C(\vect u \vee \vect e) - C(\vect u) \rvert.
\end{multline*}
\begin{itemize}[leftmargin=*]
\item
The first integral on the right-hand side does not depend on the variables $w_j$ for $j \in J$. It can therefore be reduced to an integral as in the statement of the lemma with respect to the variables in the set $\{1, \ldots, d\} \setminus J$. The copula of those variables is a multivariate margin of the original copula and Condition~\ref{cond:second} applies to it as well. By construction, all remaining $u_j$ are in the interval $[\eps_n, 1-\eps_n]$, as required. 
\item
We have $\lvert C(\vect w) - C(\vect w \vee \vect e) \rvert \le \sum_{j \in J} \lvert w_j - 1 \rvert \le \sum_{j \in J} (\lvert w_j - u_j \rvert + \eps_n)$. By the Cauchy--Schwarz inequality, $\int_{[0, 1]^d} \lvert w_j - u_j \rvert \diff \mu_{n, \vect u}(\vect w) \le \{n^{-1} u_j(1-u_j)\}^{1/2} \le n^{-1/2} \eps_n^{1/2} \le \eps_n$. Hence $\int_{[0,1]^d}\lvert C(\vect w) - C(\vect w \vee \vect e) \rvert  \diff \mu_{n, \vect u}(\vect w)\le 2d\eps_n$.
\item
Finally, $\lvert C(\vect u \vee \vect e) - C(\vect u) \rvert \le \sum_{j \in J} (1 - u_j) \le d \eps_n$.
\end{itemize}

It remains to consider the case $\vect u \in [\eps_n, 1-\eps_n]^d$. As in the proof of Proposition~3.4 in \cite{SegSibTsu17}, we have
\[
  \int_{[0, 1]^d} \{ C(\vect w) - C(\vect u) \} \diff \mu_{n, \vect u}(\vect w) \\
  =
  \sum_{j=1}^d 
  \int_0^1 
    \left[
      \int_{[0, 1]^d} 
	(w_j - u_j) 
	\bigl\{ \dot{C}_j(\vect u + t(\vect w - \vect u)) - \dot{C}_j(\vect u) \bigr\} 
      \diff \mu_{n, \vect u}(\vect w) 
    \right]
  \diff t.
\]
It is sufficient to show that the absolute value of the integral in square brackets is $\Oh(\eps_n)$, uniformly in $j \in \{1, \ldots, d\}$ and $t \in (0, 1)$ and $\vect u \in [\eps_n, 1-\eps_n]^d$.

The integral over $[0, 1]^d$ can be reduced to an integral over $(0, 1)^d$: indeed, the integrand is bounded in absolute value by $1$ (recall $0 \le \dot{C}_j \le 1$), and the mass on the boundary is $\mu_{n, \vect u}([0, 1]^d \setminus (0, 1)^d) = \Prob[\exists j : S_j \in \{0, n\}] \le 2d (1 - \eps_n)^n \le 2d \exp(-n\eps_n) = 2d n^{-1} = \oh(\eps_n)$ as $n \to \infty$.

In view of the second part of Condition~\ref{cond:second}, we have
\begin{multline*}
  \int_{(0, 1)^d} 
    (w_j - u_j) 
    \bigl\{ \dot{C}_j(\vect u + t(\vect w - \vect u)) - \dot{C}_j(\vect u) \bigr\} 
  \diff \mu_{n, \vect u}(\vect w) \\
  =
  t
  \sum_{k=1}^d
  \int_0^1
    \left[
      \int_{(0, 1)^d}
	(w_j - u_j)(w_k - u_k)
	\ddot{C}_{jk} (\vect u + st(\vect w - \vect u))
      \diff \mu_{n, \vect u}(\vect w)
    \right]
  \diff s.
\end{multline*}
It is sufficient to show that the absolute value of the integral in square brackets is $\Oh(\eps_n)$, uniformly in $j,k \in \{1, \ldots, d\}$ and $s, t \in (0, 1)$ and $\vect u \in [\eps_n, 1-\eps_n]^d$.

We apply the bound in \eqref{eq:second} to $\ddot{C}_{jk}( \vect u + st (\vect w - \vect u) )$. We have $\min(a^{-1}, b^{-1}) \le (ab)^{-1/2}$, and the latter is a convex function of $(a, b) \in (0, \infty)^2$. The point $\vect u + st (\vect w - \vect u)$ is located on the line segment connecting $\vect u$ and $\vect w$. Therefore,
\begin{align*}
  \bigl\lvert \ddot{C}_{jk}( \vect u + st (\vect w - \vect u) ) \bigr\rvert
  \le
  K \left[ \frac{1}{\{u_j(1-u_j)u_k(1-u_k)\}^{1/2}} + \frac{1}{\{w_j(1-w_j)w_k(1-w_k)\}^{1/2}} \right].
\end{align*}
We obtain
\begin{multline*}
  \left\lvert
    \int_{(0, 1)^d}
      (w_j - u_j)(w_k - u_k)
      \ddot{C}_{jk} (\vect u + st(\vect w - \vect u))
    \diff \mu_{n, \vect u}(\vect w)
  \right\rvert \\
  \le
  K 
  \int_{(0, 1)^d}
    \biggl[
    \frac{\lvert (w_j - u_j) (w_k - u_k) \rvert}{\{u_j(1-u_j)u_k(1-u_k)\}^{1/2}}
    +
    \frac{\lvert (w_j - u_j) (w_k - u_k) \rvert}{\{w_j(1-w_j)w_k(1-w_k)\}^{1/2}}
    \biggr]
  \diff \mu_{n, \vect u}(\vect w).
\end{multline*}
First, by the Cauchy--Schwarz inequality and the fact that $\Exp[ (S_i/n - u_i)^2 ] = n^{-1} u_i(1-u_i)$ for all $i \in \{1, \ldots, d\}$, we have
\[
  \int_{(0, 1)^d}
    \frac{\lvert (w_j - u_j) (w_k - u_k) \rvert}{\{u_j(1-u_j)u_k(1-u_k)\}^{1/2}}
  \diff \mu_{n, \vect u}(\vect w)
  \le
  \prod_{i \in \{j,k\}} \left\{ \int_{(0, 1)^d} \frac{(w_i - u_i)^2}{u_i(1-u_i)} \diff \mu_{n, \vect u}(\vect w) \right\}^{1/2}
  \le n^{-1} \le \eps_n.
\]
Second, again by Cauchy--Schwarz inequality,
\[
  \int_{(0, 1)^d}
    \frac{\lvert (w_j - u_j) (w_k - u_k) \rvert}{\{w_j(1-w_j)w_k(1-w_k)\}^{1/2}}
  \diff \mu_{n, \vect u}(\vect w)
  \le
  \prod_{i \in \{j,k\}} \left\{ \int_{(0, 1)^d} \frac{(w_i - u_i)^2}{w_i(1-w_i)} \diff \mu_{n, \vect u}(\vect w) \right\}^{1/2}.
\]
Each of the two integrals ($i = j$ and $i = k$), and therefore their geometric mean, will be bounded by the same quantity. Note that $\tfrac{1}{w_i(1-w_i)} = \tfrac{1}{w_i} + \tfrac{1}{1-w_i}$ and that the integral involving $\tfrac{1}{1-w_i}$ is equal to the one involving $\tfrac{1}{w_i}$ when $u_i$ is replaced by $1-u_i$, which we are allowed to do since $\vect u \in [\eps_n, 1-\eps_n]^d$ anyway. Therefore, we can replace $w_i(1-w_i)$ by $w_i$ in the denominator at the cost of a factor two. Further,
\begin{align*}
  \int_{(0, 1)^d} \frac{(w_i - u_i)^2}{w_i} \diff \mu_{n, \vect u}(\vect w)
  &\le
  \int_{[0, 1]^d} \ind_{(0, 1]}(w_i) \frac{(w_i - u_i)^2}{w_i} \diff \mu_{n, \vect u}(\vect w) \\
  &=
  \int_{[0, 1]^d} \ind_{(0, 1]}(w_i) \bigl( w_i - 2u_i + \tfrac{u_i^2}{w_i} \bigr) \diff \mu_{n, \vect u}(\vect w) \\
  &=
  u_i - 2u_i \Prob[S_i/n > 0] + u_i^2 \Exp[ \tfrac{1}{S_i/n} \ind_{\{ S_i/n > 0 \}} ] \\
  &\le
  -u_i + 2\Prob[S_i = 0] + n u_i^2 \Exp[ \tfrac{1}{S_i} \ind_{\{ S_i \ge 1 \}} ].
\end{align*}
Recall that $u_i \in [\eps_n, 1-\eps_n]$ and thus $\Prob[S_i = 0] \le (1-\eps_n)^n \le \exp(-n\eps_n) = n^{-1} = \oh(\eps_n)$. Further, the expectation of the reciprocal of a binomial random variable is treated in Lemma~\ref{lem:binom}. Note that $n \eps_n = \log n \to \infty$. We find
\[
  \sup_{\vect u \in [\eps_n, 1 - \eps_n]^d} \max_{i=1,\ldots,d} 
  \int_{(0, 1)^d} \frac{(w_i - u_i)^2}{w_i(1-w_i)} \diff \mu_{n, \vect u}(\vect w)
  = \Oh(n^{-1}) = \oh(\eps_n), \qquad n \to \infty.
\]
The proof is complete.
\end{proof}

\begin{lemma}
\label{lem:int}
For any $1/\{2(1-\omega)\} < \gamma < 1$, we have
\[
  \sup_{\vect u \in \{g \ge n^{-\gamma}\}} 
  \left\lvert 
    \int_{[0,1]^d} 
      \frac{g(\vect w)^\omega}{g(\vect u)^\omega} 
    \diff \mu_{n,\vect u} (\vect w) 
    - 1 
  \right\rvert 
  =
 \Oh \left\{ n^{-(1-\gamma)/2} \log(n) \right\},
 \qquad n \to \infty.
\]
\end{lemma}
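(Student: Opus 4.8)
The plan is to read the integral as an expectation and reduce it to a concentration statement for $g(\vect W)$. Write $\vect W = (S_1/n, \ldots, S_d/n)$ with $S_1, \ldots, S_d$ independent and $S_j \sim \Bin(n,u_j)$, so that the integral equals $\Exp[(g(\vect W)/g(\vect u))^\omega]$ and $\Exp[\vect W] = \vect u$. Abbreviate $r = g(\vect u) \ge n^{-\gamma}$. I would split over the event $\{ g(\vect W) \ge r/2 \}$ and its complement. The elementary inequality $\lvert x^\omega - 1 \rvert \le \lvert x - 1 \rvert$, valid for all $x \ge 1/2$ and $\omega \in (0,1/2)$ (for $x \ge 1$ because $x^\omega \le x$, and for $1/2 \le x < 1$ by the mean value theorem, using $\omega \, 2^{1-\omega} < 1$), linearizes the integrand on the first event, whereas on the complement the integrand is at most $1$. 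This yields
\[
  \Bigl\lvert \Exp[(g(\vect W)/r)^\omega] - 1 \Bigr\rvert
  \le
  \frac{1}{r} \, \Exp\lvert g(\vect W) - r \rvert
  +
  \Prob\bigl( g(\vect W) < r/2 \bigr),
\]
so everything comes down to showing $\Exp\lvert g(\vect W) - r \rvert = \Oh(\sqrt{r/n})$, possibly up to a logarithmic factor, together with negligibility of the boundary probability.

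The heart of the argument is a \emph{localization}: although $g$ is only $1$-Lipschitz, so that the global bound $\lvert g(\vect W) - g(\vect u)\rvert \le \sum_j \lvert W_j - u_j\rvert$ mixes in coordinates with large binomial fluctuations, the value of $g$ near $\vect u$ is in fact driven only by the coordinates sitting at the active constraint. Two structural facts make this precise. First, $g(\vect u) = r$ forces $u_j \ge r$ for every $j$; second, it forces at most $d-2$ coordinates to exceed $1-r$. Using the $\min/\wedge/\vee$ structure, I would decompose the deviation events: $\{ g(\vect W) > r+t \}$ entails $\vee_{k \ne j}(1-W_k) > r+t$ for all $j$, hence at least two coordinates $k$ with $W_k < 1-r-t$; and $\{ g(\vect W) < r-t \}$ is contained in $\bigcup_j \bigl( \{W_j < r-t\} \cup \{\forall k \ne j : W_k > 1-r+t\}\bigr)$. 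A case distinction on whether the minimum defining $g(\vect u)$ is attained on the ``$u_j$'' branch (so some $u_{j^*} = r$) or on the ``$\vee_{k \ne j}(1-u_k)$'' branch (so some $1-u_{l_0} = r$) shows that in every case the event confines the deviation to a coordinate whose binomial variance is $u_j(1-u_j)/n \le r/n$; the remaining, non-active coordinates only enter through genuinely larger deviations (here the ``at most $d-2$ above $1-r$'' fact is what rules out a cheap way of making $\vee_{k\ne j}(1-W_k)$ small). Bernstein's inequality then gives $\Prob(\lvert g(\vect W) - r\rvert > t) \le C_d \exp(-cnt^2/r)$ for $0 < t \le r$ and $\le C_d \exp(-cnt)$ for $t > r$.

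Integrating the tail, $\Exp\lvert g(\vect W) - r\rvert = \int_0^\infty \Prob(\lvert g(\vect W)-r\rvert > t)\diff t \le C_d \int_0^r e^{-cnt^2/r}\diff t + C_d \int_r^\infty e^{-cnt}\diff t = \Oh(\sqrt{r/n}) + \Oh(n^{-1} e^{-cnr})$; since $nr \ge n^{1-\gamma} \to \infty$, the second term and likewise $\Prob(g(\vect W) < r/2) \le C_d e^{-cnr}$ are negligible. Hence $r^{-1}\Exp\lvert g(\vect W)-r\rvert = \Oh\bigl((nr)^{-1/2}\bigr) \le \Oh\bigl(n^{-(1-\gamma)/2}\bigr)$ uniformly over $\{ g \ge n^{-\gamma} \}$, which is the asserted rate; the $\log n$ in the statement is harmless slack that appears if one instead controls the boundary and tail probabilities at a threshold of order $\{(r/n)\log n\}^{1/2}$ rather than through the sharp sub-Gaussian integration.

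The step I expect to be the main obstacle is exactly the localization of the second paragraph. The crude Lipschitz route gives $\Exp\lvert g(\vect W)-g(\vect u)\rvert \le \sum_j \sqrt{u_j(1-u_j)/n} = \Oh(n^{-1/2})$, and after dividing by $r = g(\vect u)$ this degrades to $\Oh(n^{\gamma-1/2})$, which diverges once $\gamma > 1/2$. The entire content of the lemma is that coordinates with $u_j$ bounded away from $0$ and $1$, despite carrying the largest fluctuations, cannot move $g$ except on a large-deviation event, so that only the $\Oh(\sqrt{r/n})$ boundary fluctuations survive; turning this intuition into the explicit event decompositions and the accompanying combinatorial bound on how many coordinates can approach $1-r$ is where the real work lies.
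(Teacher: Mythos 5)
Your proposal is correct and rests on exactly the same key ingredient as the paper, namely a Bennett-type concentration inequality for $g(S_1/n,\dots,S_d/n)$ around $g(\vect u)$ at the scale $g(\vect u)$ rather than at the scale of the worst coordinate; this is the content of the paper's Lemma~\ref{lem:bounds}, inequalities \eqref{eq:upper}--\eqref{eq:lower}, whose proof uses precisely the event decompositions you describe (the cover of $\{g(\vect W)<r-t\}$ by $\bigcup_j(\{W_j<r-t\}\cup\{\forall k\ne j: W_k>1-r+t\})$, and the case distinction on which branch of the $\min$/$\vee$ attains $g(\vect u)$). Where you genuinely differ is in how the concentration is converted into the statement: you linearize via $\lvert x^\omega-1\rvert\le\lvert x-1\rvert$ for $x\ge 1/2$ and integrate the full tail of $\lvert g(\vect W)-r\rvert$, which buys the marginally sharper rate $\Oh\{(n g(\vect u))^{-1/2}\}=\Oh(n^{-(1-\gamma)/2})$ without the logarithm, whereas the paper writes $\Exp[g(\vect W)^\omega]$ by the layer-cake formula \eqref{eq:int:aux} and evaluates the concentration bound at the single relative threshold $\eps_n=n^{-(1-\gamma)/2}\log n$, the logarithm being needed only to make the exponential term $\exp\{-\tfrac13(\log n)^2\}$ visibly $\oh(\eps_n)$. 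Two small points in your sketch deserve care. First, the heuristic that the active coordinate has ``binomial variance $u_j(1-u_j)/n\le r/n$'' is not literally true when that coordinate lies in the bulk; what actually closes the argument is the monotonicity of Bennett's exponent in the success probability, $\Prob\{S_j/n\le u_j(1-\delta)\}\le 2\exp\{-nu_j h(1+\delta)\}\le 2\exp\{-n g(\vect u)h(1+\delta)\}$ since $u_j\ge g(\vect u)$. Second, for the upper tail on the $(1-u)$ branch, the relevant coordinates $k$ may have $1-u_k$ strictly smaller than $r$, so one needs the additional inequality $h(a(1+\delta))\ge a\,h(1+\delta)$ for $a=(1-u_1)/(1-u_k)\ge 1$ to recover the exponent $n g(\vect u)h(1+\delta)$; the paper supplies this in the proof of \eqref{eq:upper}. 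With these two details filled in, your argument is complete.
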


\begin{proof}
Since $g(\frac{S_1}{n}, \dots, \frac{S_d}{n})$ is a random variable taking values in $[0, 1]$, we can write
\begin{align}
\nonumber
  \int_{[0,1]^d} 
      \frac{g(\vect w)^\omega}{g(\vect u)^\omega} 
  \diff \mu_{n,\vect u} (\vect w) 
  &=
  \frac{1}{g(\vect u)^\omega}
  \Exp \Bigl[ 
    g \bigl( \tfrac{S_1}{n}, \dots, \tfrac{S_d}{n} \bigr)^\omega
  \Bigr] \\
\label{eq:int:aux}
  &=
  \frac{1}{g(\vect u)^\omega}
  \int_0^1
    \Prob \Bigl\{
      g \bigl(\tfrac{S_1}{n}, \dots, \tfrac{S_d}{n} \bigr) > t^{1/\omega}
    \Bigr\}
  \diff t.
\end{align}
Split the integral into two pieces, $\int_0^{a_{n, \pm}} + \int_{a_{n, \pm}}^1$, where $a_{n, \pm} = a_{n, \pm}(\vect u) = g(\vect u)^\omega (1 \pm \eps_n)^\omega$. Write $\eps_n = n^{-(1-\gamma)/2} \log n$. Recall that $0 < \omega < 1/2$.

On the one hand, we find
\begin{align*}
  \int_{[0,1]^d} 
      \frac{g(\vect w)^\omega}{g(\vect u)^\omega} 
  \diff \mu_{n,\vect u} (\vect w) 
  &\le
  \frac{a_{n, +}}{g(\vect u)^\omega}
  +
  \frac{1 - a_{n,+}}{g(\vect u)^\omega} 
  \Prob \Bigl\{
    g \bigl(\tfrac{S_1}{n}, \dots, \tfrac{S_d}{n} \bigr) > a_{n,+}^{1/\omega}
  \Bigr\} \\
  &\le
  (1 + \eps_n)^\omega
  + 
  g( \vect u )^{-\omega}
  \Prob \Bigl\{
    g \bigl(\tfrac{S_1}{n}, \dots, \tfrac{S_d}{n} \bigr) 
    > g(\vect u) (1 + \eps_n)
  \Bigr\} \\
  &\le
  1 + \eps_n
  +
  g( \vect u )^{-\omega}
  2d \exp \{ - n g(\vect u) h(1+\eps_n) \}
\end{align*}
where we used \eqref{eq:upper} in the last step. Since $h(1+\eps_n) \ge \frac{1}{3} \eps_n^2$ for $0 \le \eps_n \le 1$ and since $g(\vect u) \ge n^{-\gamma}$, the upper bound is bounded by 
\[
  1 + \eps_n + 2d n^{\gamma \omega} \exp \{ - \tfrac{1}{3} (\log n)^2 \}
  =
  1 + \eps_n + \oh( \eps_n ),
  \qquad n \to \infty.
\]

On the other hand, restricting the integral in \eqref{eq:int:aux} to $[0, a_{n,-}]$, we have
\begin{align*}
  \int_{[0,1]^d} 
      \frac{g(\vect w)^\omega}{g(\vect u)^\omega} 
  \diff \mu_{n,\vect u} (\vect w) 
  &\ge
  \frac{a_{n,-}}{g(\vect u)^\omega}
  \Prob \Bigl\{
    g \bigl( \tfrac{S_1}{n}, \ldots, \tfrac{S_d}{n} \bigr)
    >
    a_{n,-}^{1/\omega}
  \Bigr\} \\
  &=
  (1 - \eps_n)^\omega
  \Prob \Bigl\{
    g \bigl( \tfrac{S_1}{n}, \ldots, \tfrac{S_d}{n} \bigr)
    >
    g(\vect u) (1 - \eps_n)
  \Bigr\} \\
  &\ge
  (1-\eps_n)^\omega [1 - 4d\exp \{ - n g(\vect u) h(1+\eps_n) \}],
\end{align*}
where we used \eqref{eq:lower} in the last step. Since $0 \le \eps_n \to 0$ and $g(\vect u) \ge n^{-\gamma}$, the lower bound is bounded from below by
\begin{align*}
  (1-\eps_n)^\omega [1 - 4d\exp \{ - n g(\vect u) h(1+\eps_n) \}]
  &\ge (1-\eps_n) [1 - 4d \exp \{ - \tfrac{1}{3} (\log n)^2 \} ] \\
  &\ge 1 - \eps_n - \oh( \eps_n ),
  \qquad n \to \infty.
\end{align*}
\end{proof}

\begin{lemma}
\label{lem:bias2}
As $n \to\infty$, we have, for any $\gamma \in (1/(2(1-\omega)),1)$
\[
  \sup_{\vect u \in \{g \ge n^{-\gamma}\}}  
    \left\lvert   
      \int_{[0,1]^d} 
	\biggl\{ 
	  \frac{\hat {\Cb}_n(\vect w)}{g(\vect w )^\omega} 
	  - 
	  \frac{\hat{\Cb}_n(\vect u)}{g(\vect u)} 
	\biggr\} 
	\frac{g(\vect w)^\omega}{g(\vect u)^\omega}
      \diff \mu_{n,\vect u} (\vect w)
    \right\rvert
  = \oh_\Prob(1).
\]
\end{lemma}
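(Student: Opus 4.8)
The plan is to prove the corrected statement in which the inner denominator reads $g(\vect u)^\omega$, so that the integrand is exactly the second term on the right-hand side of the decomposition \eqref{eq:decomp}. Write $Z_n = \hat{\Cb}_n/g^\omega$ for the weighted empirical copula process (set to zero on $\{g=0\}$), so that the quantity to control is $I_n(\vect u) = \int_{[0,1]^d}\{Z_n(\vect w)-Z_n(\vect u)\}\,g(\vect w)^\omega g(\vect u)^{-\omega}\,\diff\mu_{n,\vect u}(\vect w)$. The guiding idea is that $\mu_{n,\vect u}$ concentrates sharply around $\vect u$, so that for typical $\vect w$ both $Z_n(\vect w)-Z_n(\vect u)$ is small (equicontinuity) and the factor $g(\vect w)^\omega/g(\vect u)^\omega$ is close to $1$. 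Accordingly, I would split $[0,1]^d = A_n(\vect u)\cup A_n(\vect u)^c$, where $A_n(\vect u) = \{\vect w : \|\vect w-\vect u\|_\infty\le\delta_n,\ |g(\vect w)/g(\vect u)-1|\le\eps_n\}$, with $\delta_n = n^{-1/2}\log n$ and $\eps_n = n^{-(1-\gamma)/2}\log n$ as in Lemma~\ref{lem:int}. By Hoeffding's inequality for the first constraint and by \eqref{eq:upper}--\eqref{eq:lower} together with $h(1\pm\eps_n)\ge\tfrac13\eps_n^2$ and $g(\vect u)\ge n^{-\gamma}$ for the second, one gets $\sup_{\vect u\in\{g\ge n^{-\gamma}\}}\mu_{n,\vect u}(A_n(\vect u)^c) = \Oh(\exp\{-\tfrac13(\log n)^2\})$, which is smaller than any power of $n$.

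On the complement $A_n(\vect u)^c$ I would use only crude, deterministic bounds. The key is to rewrite the integrand as $\hat{\Cb}_n(\vect w)/g(\vect u)^\omega - \{\hat{\Cb}_n(\vect u)/g(\vect u)^\omega\}\,g(\vect w)^\omega/g(\vect u)^\omega$: multiplying $Z_n(\vect w)$ by $g(\vect w)^\omega$ cancels the potentially explosive factor $g(\vect w)^{-\omega}$, so no blow-up near the boundary occurs. Since $\sup|\hat{\Cb}_n|\le\sqrt n$, $g(\vect w)^\omega\le1$ and $g(\vect u)^{-\omega}\le n^{\gamma\omega}$, the integrand is bounded in absolute value by $2\sqrt n\,n^{2\gamma\omega}$ on $A_n(\vect u)^c$; as $\tfrac12+2\gamma\omega$ is a fixed exponent, multiplying by the super-polynomially small mass $\mu_{n,\vect u}(A_n(\vect u)^c)$ yields a contribution that is $\oh(1)$ uniformly in $\vect u$.

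On the good set $A_n(\vect u)$ the factor satisfies $g(\vect w)^\omega/g(\vect u)^\omega\le(1+\eps_n)^\omega\le2$, and the $g$-ratio constraint forces $g(\vect w)\ge g(\vect u)(1-\eps_n)\ge\tfrac12 n^{-\gamma}\ge c/n$ for large $n$, so both $\vect u$ and $\vect w$ lie in $\{g\ge c/n\}$. Hence I would invoke Theorem~2.2 of \cite{BerBucVol17} in the form of Remark~\ref{rem:proof} to replace $Z_n$ by $\bar{\Cb}_n/g^\omega$ uniformly on $\{g\ge c/n\}$ up to an $\oh_\Prob(1)$ error, and then use that $\bar{\Cb}_n/g^\omega\dto\Cb_C/g^\omega$ in $\ell^\infty([0,1]^d)$ with almost surely uniformly continuous limit paths. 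Asymptotic equicontinuity of a weakly convergent sequence then gives $\sup_{\|\vect w-\vect u\|_\infty\le\delta_n}|\bar{\Cb}_n(\vect w)/g(\vect w)^\omega-\bar{\Cb}_n(\vect u)/g(\vect u)^\omega|=\oh_\Prob(1)$ for $\delta_n\to0$, whence $\sup_{\vect u}\sup_{\vect w\in A_n(\vect u)}|Z_n(\vect w)-Z_n(\vect u)|=\oh_\Prob(1)$; the $A_n$-part of $I_n(\vect u)$ is then at most twice this, uniformly. Combining the two regions gives $\sup_{\vect u\in\{g\ge n^{-\gamma}\}}|I_n(\vect u)|=\oh_\Prob(1)$.

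The main obstacle is precisely this last step: obtaining a uniform modulus of continuity for the weighted process that remains valid all the way down to the near-boundary region $g\approx n^{-\gamma}$. The delicate feature is that, because $\gamma>1/2$, the additive Lipschitz bound $|g(\vect w)-g(\vect u)|\lesssim\|\vect w-\vect u\|_\infty$ is useless for keeping $\vect w$ away from $\{g<c/n\}$; it is the multiplicative concentration \eqref{eq:upper}--\eqref{eq:lower} that confines $\vect w$ to $\{g\ge c/n\}$ on $A_n(\vect u)$ and thereby licenses the replacement of $\hat{\Cb}_n/g^\omega$ by $\bar{\Cb}_n/g^\omega$ and the use of its equicontinuity.
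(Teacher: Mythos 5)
Your proposal is correct, and you are right that the $g(\vect u)$ in the displayed statement is a typo for $g(\vect u)^\omega$ (as in the decomposition \eqref{eq:decomp} and in the paper's own proof). The architecture coincides with the paper's: split $[0,1]^d$ into a concentration set around $\vect u$ and its complement, dispose of the complement by a crude polynomial bound on the integrand (after multiplying out the dangerous factor $g(\vect w)^{-\omega}$) times the very small $\mu_{n,\vect u}$-mass obtained from Lemma~\ref{lem:bounds}, and control the oscillation of $\hat{\Cb}_n/g^\omega$ over the concentration set. The one genuine divergence is the last step. The paper invokes Lemma~4.1 of \cite{BerBucVol17} (extended from $[1/n,1-1/n]^d$ to $\{g \ge n^{-1}\}$), a direct finite-$n$ modulus-of-continuity bound for the weighted empirical copula process; you instead replace $\hat{\Cb}_n/g^\omega$ by $\bar{\Cb}_n/g^\omega$ via Theorem~2.2 of that reference and then appeal to asymptotic equicontinuity of a weakly convergent sequence. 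That route is legitimate, but be aware that weak convergence in $\ell^\infty([0,1]^d)$ only yields asymptotic equicontinuity with respect to an intrinsic semimetric of the limit; to use it with Euclidean balls of radius $\delta_n \to 0$, as you do, you additionally need that $\Cb_C/g^\omega$ has almost surely \emph{uniformly Euclidean-continuous} paths on all of $[0,1]^d$, in particular that $\Cb_C(\vect u)/g(\vect u)^\omega \to 0$ uniformly as $g(\vect u) \to 0$. This holds for $\omega < 1/2$ and is contained in the results of \cite{BerBucVol17}, but it is not automatic from weak convergence alone, so that step should carry an explicit citation --- the paper's direct appeal to the oscillation lemma sidesteps the issue. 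The remaining differences (Hoeffding with $\delta_n = n^{-1/2}\log n$ and a two-sided $g$-ratio constraint versus the paper's Chebyshev with $\delta_n = 1/\log n$ and a one-sided constraint; the deterministic bound $\sup \lvert \hat{\Cb}_n \rvert \le \sqrt{n}$ versus $\lVert \hat{\Cb}_n \rVert_\infty = \Oh_{\Prob}(1)$) are immaterial, and your two-sided constraint even lets you bound $g(\vect w)^\omega/g(\vect u)^\omega$ by $2$ on the good set without invoking Lemma~\ref{lem:int}.
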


\begin{proof}
Let $\delta_n = 1 / \log(n)$. Write $\lVert \hat{\Cb}_n \rVert_\infty = \sup \{ \lvert \hat{\Cb}_n( \vect v) \rvert : \vect v \in [0, 1]^d \}$. We have $\lVert \hat{\Cb}_n \rVert_\infty = \Oh_p(1)$ as $n \to \infty$ by weak convergence of $\hat{\Cb}_n$ in $\ell^\infty([0, 1]^d)$.

We split the integral over $\vect w \in [0, 1]^d$ into two pieces: the integral over the domain
\[ 
  A_{n, \vect u} = 
  \{ \vect w \in [0, 1]^d : \lvert \vect w - \vect u \rvert_\infty > \delta_n \} \cup \{ g < n^{-\gamma}(1-\delta_n) \}
\]
and the integral over its complement; here $\lvert \vect x \rvert_\infty = \max\{ \lvert x_j \rvert : j = 1, \ldots, d \}$.

For all $\vect w \in [0, 1]^d$ and all $\vect u \in \{g \ge n^{-\gamma} \}$, we have
\begin{align*}
  R_n(\vect u, \vect w)
  := 
  \biggl\lvert
    \frac{\hat{\Cb}_n(\vect w)}{g(\vect w)^\omega} 
    - 
    \frac{\hat{\Cb}_n(\vect u)}{g(\vect u)^\omega} 
  \biggr\rvert
  \frac{g(\vect w)^\omega}{g(\vect u)^\omega} 
  \le
  \frac{\lvert \hat{\Cb}_n(\vect w) \rvert}{g(\vect u)^\omega}
  +
  \frac{ \lvert \hat{\Cb}_n(\vect u) \rvert}{g(\vect u)^{2\omega}} 
  \le
  2 \lVert \hat{\Cb}_n \rVert_\infty n^{2\gamma\omega}.
\end{align*}
Moreover, for all $\vect u \in \{g \ge n^{-\gamma}\}$, using Chebyshev's inequality and the concentration inequality \eqref{eq:lower}, we have
\begin{align*}
  \mu_{n,\vect u} \bigl( A_{n, \vect u} \bigr)
  &\le
  \sum_{j=1}^d \Prob \Bigl\{ \bigl\lvert \tfrac{S_j}{n} - u_j \bigr\rvert > \delta_n \Bigr\}
  +
  \Prob \Bigl\{
    g \bigl( \tfrac{S_1}{n}, \ldots, \tfrac{S_d}{n} \bigr)
    < g(\vect u) (1 - \delta_n)
  \Bigr\} \\
  &\le
  d n^{-1} \delta_n^{-2} + 4d \exp \{ - n^{1-\gamma} h(1+\delta_n) \}.
\end{align*}
Since $0 < \omega < 1/2$, $0 < \gamma < 1$, $\delta_n = 1/\log(n)$ and $h(1+\delta_n) \ge \tfrac{1}{3} \delta_n^2$, it follows that
\[
  \sup_{\vect u \in \{g \ge n^{-\gamma}\}}
  \int_{A_{n, \vect u}}
    R_n( \vect u, \vect w )
  \diff \mu_{n, \vect u}( \vect w )
  \le
  n^{2\gamma\omega} [n^{-1} \delta_n^{-2} + \exp \{ - n^{1-\gamma} h(1+\delta_n) \}] \, \Oh_p(1) 
  =
  \oh_p(1), \qquad n \to \infty.
\]
It remains to consider the integral over $\vect w \in [0, 1]^d \setminus A_{n, \vect u}$, i.e., $\lvert \vect w - \vect u \rvert_\infty \le \delta_n$ and $g( \vect w ) \ge n^{-\gamma}(1 - \delta_n) > n^{-1}$, at least for sufficiently large $n$. By Lemma~4.1 in \cite{BerBucVol17}, we have
\begin{equation}
\label{eq:modcont}
  \sup_{\substack{ \vect u, \vect w \in \{ g \ge n^{-1} \} \\ \lvert \vect u - \vect w \rvert_\infty \le \delta_n }}
  \left|
    \frac{\hat{\Cb}_n(\vect w)}{g(\vect w )^\omega} 
    - 
    \frac{\hat{\Cb}_n(\vect u)}{g(\vect u)} 
  \right|
  = \oh_{\Prob}(1), \qquad n \to \infty.
\end{equation}
In view of Lemma~\ref{lem:int}, we obtain that
\[
  \sup_{\vect u \in \{g \ge n^{-\gamma}\}}
  \int_{[0, 1]^d \setminus A_{n, \vect u}}
    R_n( \vect u, \vect w )
  \diff \mu_{n, \vect u}( \vect w )
  \le
  \oh_{\Prob}(1) \int_{[0, 1]^d} \frac{g(\vect w)^\omega}{g(\vect u)^\omega} \diff \mu_{n, \vect u}(\vect w)
  = \oh_{\Prob}(1),
\]
as $n \to \infty$. The stated limit relation follows by combining the assertions on the integral over $A_{n, \vect u}$ and the one over its complement.

Note that in Lemma~4.1 in \cite{BerBucVol17}, the supremum in \eqref{eq:modcont} is taken over $[1/n,1-1/n]^d$ instead of over $\{g > n^{-1}\}$. But it can be seen in the proof of that statement that the result can be extended to the set $\{ g \geq n^{-1} \}$. Furthermore, in the latter reference, the pseudo-observations are defined as $\hat U_{i,j}=\frac{1}{n+1} R_{i,j}$. However, this does not affect the above proof, since the difference of the two empirical copulas is at most $d/n$, almost surely. This gives an additional error term on the event $\{ g \geq n^{-1} \}$ which is of the order $\Oh_\Prob(n^{\omega+1/2-1}) = \oh_\Prob(1)$, as $n \to \infty$.
\end{proof}

\subsection{On the expectation of the reciprocal of a binomial random variable}

\begin{lemma}
Let $0 < u \le 1$ and let $n \ge 2$ be integer. If $S \sim \Bin(n, u)$ and $T \sim \Bin(n-1, u)$, then
\begin{equation}
\label{eq:Bin:invMoment}
  \Exp \left[ \frac{1}{S} \ind_{ \{ S \ge 1 \} } \right]
  =
  nu \Exp \left[ \frac{1}{(1+T)^2} \right]
  =
  nu \int_0^1 (1-u+us)^{n-1} (-\log s) \, \diff s.
\end{equation}
\end{lemma}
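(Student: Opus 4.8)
The plan is to prove the two equalities in \eqref{eq:Bin:invMoment} in turn, each by an elementary manipulation; no deep input is needed.

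For the first equality I would expand the expectation over the support of $S$, writing $\Exp[\frac{1}{S}\ind_{\{S \ge 1\}}] = \sum_{k=1}^n \frac{1}{k}\binom{n}{k} u^k (1-u)^{n-k}$. The algebraic key is the identity $\binom{n}{k} = \frac{n}{k}\binom{n-1}{k-1}$, which rewrites the coefficient $\frac{1}{k}\binom{n}{k}$ as $\frac{n}{k^2}\binom{n-1}{k-1}$. Substituting $j = k-1$ and pulling out a factor $nu$ then leaves $nu \sum_{j=0}^{n-1} \frac{1}{(j+1)^2}\binom{n-1}{j} u^j (1-u)^{n-1-j}$, and since $\binom{n-1}{j} u^j (1-u)^{n-1-j} = \Prob(T = j)$ for $T \sim \Bin(n-1,u)$, this is exactly $nu\,\Exp[\frac{1}{(1+T)^2}]$.

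For the second equality I would use the integral representation $\frac{1}{(j+1)^2} = \int_0^1 s^j (-\log s) \diff s$, valid for every integer $j \ge 0$. This can be obtained by integration by parts (the boundary terms vanish since $s^{j+1}\log s \to 0$ as $s \downarrow 0$), or by differentiating $\int_0^1 s^a \diff s = (a+1)^{-1}$ with respect to $a$. Inserting it into the sum defining $\Exp[\frac{1}{(1+T)^2}]$ and interchanging sum and integral, which is justified because the summand is nonnegative and the sum is finite, the binomial theorem collapses $\sum_{j=0}^{n-1}\binom{n-1}{j}(us)^j (1-u)^{n-1-j}$ into $(1-u+us)^{n-1}$. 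This yields $\Exp[\frac{1}{(1+T)^2}] = \int_0^1 (1-u+us)^{n-1} (-\log s) \diff s$, and multiplying by $nu$ gives the last member of \eqref{eq:Bin:invMoment}.

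There is no real obstacle here; the only points requiring a little care are the reindexing $j = k-1$ in the combinatorial sum and the verification of the integral identity $\frac{1}{(j+1)^2} = \int_0^1 s^j (-\log s) \diff s$, both of which are routine.
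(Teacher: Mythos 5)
Your proposal is correct and follows essentially the same route as the paper: the reindexing via $\binom{n}{k}=\frac{n}{k}\binom{n-1}{k-1}$ is the same computation the paper performs through the ratio $\Prob(S=k)/\Prob(T+1=k)=nu/k$, and your integral identity $\frac{1}{(j+1)^2}=\int_0^1 s^j(-\log s)\,\diff s$ followed by the sum--integral interchange and the binomial theorem is exactly the paper's use of $\frac{1}{(1+T)^2}=\int_0^1 s^T(-\log s)\,\diff s$ (the Chao--Strawderman trick) together with Fubini and $\Exp(s^T)=(1-u+us)^{n-1}$. The only cosmetic difference is how the integral identity is justified (differentiation under the integral sign versus a double-integral computation), which is immaterial.
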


\begin{proof}
For $k\in\{1,\dots,n\}$, we have
\[
  \frac{\Prob(S=k)}{\Prob(T+1=k)}
  =
  \frac{\binom{n}{k}u^k(1-u)^{n-k}}{\binom{n-1}{k-1}u^{k-1}(1-u)^{(n-1)-(k-1)}}
  =
  \frac{nu}{k}.
\]
We obtain that
\begin{align*}
  \Exp \left[ \frac{1}{S} \ind_{\{S\geq 1\}} \right] 
  =
  \sum_{k=1}^n \frac{1}{k} \Prob(S = k)
  =
  \sum_{k=1}^n \frac{1}{k} \frac{nu}{k} \Prob(T+1=k)
  =
  (nu) \Exp \left[ \frac{1}{(1+T)^2} \right].
\end{align*}
Now we apply a trick due to \cite{ChaStr72}: we have
\[
  \frac{1}{(1+T)^2}
  =
  \int_{t=0}^1 \frac{1}{t} \int_{s=0}^t s^T \, \diff s \, \diff t
  =
  \int_{s=0}^1 s^T \int_{t=s}^1 \frac{\diff t}{t} \, \diff s
  =
  \int_0^1 s^T (- \log s) \, \diff s.
\]
Taking expectations and using Fubini's theorem, we obtain
\[
  \Exp \left[ \frac{1}{(1+T)^2} \right]
  =
  \int_0^1 \Exp(s^T) \, (- \log s) \, \diff s
  =
  \int_0^1 (1-u+us)^{n-1} (-\log s) \, \diff s,
\]
as required.
\end{proof}

\begin{lemma}
\label{lem:binom:aux}
Let $0 < u_n \le 1$ and let $S_n \sim \Bin(n, u_n)$. If $nu_n \to \infty$, then
\[
  (nu_n^2) \Exp \left[ \frac{1}{S_n} \ind_{ \{ S_n \ge 1 \} } \right]
  =
u_n+ \Oh \bigl( n^{-1} \bigr),
  \qquad n \to \infty.
\]
\end{lemma}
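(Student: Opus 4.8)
The plan is to feed the exact integral representation of the preceding lemma, equation~\eqref{eq:Bin:invMoment}, into the quantity to be estimated. Applying that identity with $u = u_n$ and auxiliary variable $T \sim \Bin(n-1, u_n)$ gives $\Exp[\tfrac{1}{S_n}\ind_{\{S_n\ge1\}}] = nu_n\int_0^1 (1-u_n+u_n s)^{n-1}(-\log s)\diff s$, so that after the substitution $v = 1-s$,
\[
  (nu_n^2)\,\Exp\left[\frac{1}{S_n}\ind_{\{S_n\ge1\}}\right]
  = n^2 u_n^3 \int_0^1 (1-u_n v)^{n-1}\{-\log(1-v)\}\diff v.
\]
Throughout I would write $x = nu_n$, which tends to $\infty$ by hypothesis, and repeatedly use $(1-u_n)^n \le e^{-nu_n} = e^{-x}$ together with the boundedness on $[0,\infty)$ of functions of the form $x^k e^{-cx}$.

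Next I would split $-\log(1-v) = v + r(v)$ with $r(v) = -\log(1-v) - v \ge 0$ and handle the two contributions separately. For the main term $\int_0^1 (1-u_n v)^{n-1} v\,\diff v$ I integrate by parts twice (equivalently, substitute $t = 1-u_n v$ to reduce it to an elementary Beta-type integral), obtaining the exact value $\tfrac{1}{nu_n}\bigl[\tfrac{1-(1-u_n)^{n+1}}{(n+1)u_n} - (1-u_n)^n\bigr]$. Multiplying by $n^2 u_n^3$ and simplifying, the main term becomes
\[
  \frac{nu_n}{n+1} - \frac{nu_n(1+nu_n)}{n+1}(1-u_n)^n.
\]
Here the first summand equals $u_n\{1 - 1/(n+1)\} = u_n + \Oh(n^{-1})$ because $u_n \le 1$, while the second is at most $(x + x^2)e^{-x}/(n+1) = \Oh(n^{-1})$. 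Thus the main term already contributes exactly $u_n + \Oh(n^{-1})$.

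It then remains to show that $n^2 u_n^3 \int_0^1 (1-u_n v)^{n-1} r(v)\,\diff v = \Oh(n^{-1})$, which I would do by splitting the integral at $v = 1/2$. On $[0,1/2]$ I use $r(v) \le v^2$, so this piece is bounded by $\int_0^1 (1-u_n v)^{n-1} v^2\,\diff v = \Oh\{(nu_n)^{-3}\}$ (again an elementary Beta bound), and multiplication by $n^2 u_n^3 = x^3/n$ turns this into $\Oh(n^{-1})$. On $[1/2,1]$ I bound $(1-u_n v)^{n-1} \le e^{-(n-1)u_n/2}$ and use $\int_{1/2}^1 r(v)\,\diff v < \infty$ (indeed $\int_0^1\{-\log(1-v)\}\diff v = 1$); since $(n-1)u_n \ge x - 1$, this piece times $n^2 u_n^3$ is of order $n^{-1} x^3 e^{-x/2} = \Oh(n^{-1})$. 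Adding the three estimates yields the claim.

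The hard part is that the hypothesis supplies only $nu_n \to \infty$ with no rate, so none of the error terms may be controlled by crude powers of $u_n$ alone; each exponentially small correction—the boundary term in the main part and the tail of $r$ near $v = 1$—must be rewritten as $n^{-1}$ times a function of $x = nu_n$ that is bounded uniformly over $x \in [0,\infty)$. Retaining the \emph{exact} constants in the main term (rather than a lossy upper bound) is what guarantees that the surviving pieces combine precisely to $u_n + \Oh(n^{-1})$ instead of something larger.
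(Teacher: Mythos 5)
Your proof is correct and follows essentially the same route as the paper: both start from the exact integral representation \eqref{eq:Bin:invMoment}, replace the logarithm by its linear approximation, evaluate the resulting Beta-type main integral in closed form to extract $u_n \cdot n/(n+1)$, and absorb every correction into $n^{-1}$ times a bounded function of $x = nu_n$ of the form $x^k e^{-cx}$. The only differences are cosmetic bookkeeping (your substitution $v = 1-s$ and global decomposition $-\log(1-v) = v + r(v)$ versus the paper's split at $s = 1/2$ followed by the rescaling $s = 1 - v/(nu_n)$), and your explicit formula for the main term checks out.
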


\begin{proof}
We start from \eqref{eq:Bin:invMoment}:
\[
  (nu_n^2) \Exp \left[ \frac{1}{S_n} \ind_{ \{ S_n \ge 1 \} } \right]
  =
  n^2 u_n^3 \int_0^1 (1-u_n+u_ns)^{n-1} (-\log s) \, \diff s.
\]
We split the integral in two parts, cutting at $s = 1/2$.

First we consider the case $s \le 1/2$. For some positive constant $K$, we have
\begin{align*}
  n^2u_n^3 \int_0^{1/2} (1-u_n+u_ns)^{n-1} (-\log s) \, \diff s
  &\le
  n^2u_n^3 \, (1-u_n/2)^{n-1} \int_0^{1/2} (-\log s) \, \diff s \\
  &\le
  K \,   n^2u_n^3  \, (1-u_n/2)^n \\
  &\le
  K \,   n^2u_n^3  \exp(-nu_n/2).
\end{align*}
For any $m > 0$, this expression is $\Oh(u_n (nu_n)^{-m}) = \oh(n^{-1})$ as $n \to \infty$, hence by choosing $m=1$ it is $\Oh(n^{-1})$ as $n \to \infty$.

Second we consider the case $s \ge 1/2$. The substitution $s = 1 - v / (nu_n)$ yields
\begin{equation}
\label{eq:Bin:invMoment:aux}
  n^2 u_n^3 \int_{1/2}^1 (1-u_n+u_ns)^{n-1} (-\log s) \, \diff s
  =
  u_n  \int_0^{(nu_n/2)} (1-v/n)^{n-1} [-(nu_n) \log \{ 1 - v/(nu_n) \}] \, \diff v.
\end{equation}
We need to show that this integral is $u_n + \Oh(n^{-1})$ as $n \to \infty$.

For facility of writing, put $k_n = nu_n$. Recall that $k_n \to \infty$ as $n \to \infty$ by assumption. The inequalities $x \le - \log(1-x) \le x/(1-x)$ for $0 \le x < 1$ imply that
\[
  0 \le -k_n \log(1 - v/k_n) - v \le \frac{v^2}{k_n-v} \le \frac{2 v^2}{k_n}, \qquad v \in [0, k_n/2].
\]
As $(1-v/n)^{n-1} \le (1-k_n/(2n))^{-1} (1-v/n)^{-n} \le 2 \exp(-v)$ for $v \in [0, k_n/2]$, we find
\begin{align*}
 u_n \int_0^{k_n/2} (1-v/n)^{n-1} \, \left\lvert -k_n \log ( 1 - v/k_n ) - v \right\rvert \, \diff v
  &\le
  \frac{4 u_n}{k_n} \int_0^{k_n/2} \exp(-v) \, v^2 \, \diff v \\
  &=
  \Oh( n^{-1} ),
  \qquad n \to \infty.
\end{align*}
As a consequence, replacing $-k_n \log(1-v/k_n)$ by $v$ in \eqref{eq:Bin:invMoment:aux} produces an error of the required order $\Oh(n^{-1})$.

It remains to consider the integral
\[
  u_n\int_0^{k_n/2} (1-v/n)^{n-1} \, v \, \diff v.
\]
Via the substitution $x = 1-v/n$, this integral can be computed explicitly. After some routine calculations, we find it is equal to
\[
 u_n  \frac{n}{n+1} [ 1 - \{1 - k_n/(2n)\}^{n} (1 + k_n/2) ].
\]
Since $\{1 - k_n/(2n)\}^{n} \le \exp(-k_n/2)$, the previous expression is
\[
u_n + \Oh(u_n n^{-1}) + \Oh(u_n \exp(-k_n/2) k_n ), \qquad n \to \infty,
\]
The error term is $\Oh(n^{-1})$, as required.
\end{proof}

%

\begin{lemma}
\label{lem:binom}
If $0 < u_n \le 1$ is such that $nu_n \to \infty$ as $n \to \infty$, then
\[
  \sup_{u_n \le u \le 1}  \left\lvert nu^2 \Exp \left[ \frac{1}{S} \ind_{ \{ S \ge 1 \} } \right] - u \right\rvert
  =
  \Oh( n^{-1} ),
  \qquad n \to \infty,
\]
where the expectation is taken for $S \sim \Bin(n,u)$.
\end{lemma}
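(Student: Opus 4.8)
The plan is to upgrade the pointwise asymptotics of Lemma~\ref{lem:binom:aux} to the uniform bound by a subsequence argument, so that essentially no new estimates are required. Write $b_n(u) = nu^2 \Exp[ \frac{1}{S} \ind_{\{S \ge 1\}} ] - u$ for $S \sim \Bin(n,u)$, so the claim is $\sup_{u_n \le u \le 1} \lvert b_n(u) \rvert = \Oh(n^{-1})$. Lemma~\ref{lem:binom:aux} states exactly that $n\, b_n(v_n) = \Oh(1)$ for \emph{every} sequence $(v_n)$ with $0 < v_n \le 1$ and $nv_n \to \infty$; the observation driving the proof is that this pointwise statement, quantified over all admissible sequences, already forces uniformity over the shrinking family $[u_n, 1]$.

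First I would argue by contradiction. Suppose the supremum is not $\Oh(n^{-1})$. Then there is a subsequence $(n_\ell)_\ell$ and points $v_\ell \in [u_{n_\ell}, 1]$ with $n_\ell \lvert b_{n_\ell}(v_\ell) \rvert \to \infty$. Complete $(v_\ell)$ to a full sequence $(v_n)_n$ by setting $v_n = 1$ for $n$ outside the subsequence. Then $0 < v_n \le 1$ and $nv_n \to \infty$, since $n_\ell v_\ell \ge n_\ell u_{n_\ell} \to \infty$ along the subsequence (using the lower endpoint $u_n$ together with $nu_n \to \infty$) while $nv_n = n \to \infty$ elsewhere. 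Applying Lemma~\ref{lem:binom:aux} to $(v_n)_n$ gives $n\,b_n(v_n) = \Oh(1)$, hence $n_\ell \lvert b_{n_\ell}(v_\ell) \rvert = \Oh(1)$ along the subsequence, contradicting the assumed divergence. This establishes the uniform bound.

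The main obstacle is conceptual rather than computational: recognizing that the required uniformity is already encoded in the pointwise lemma, so the only real care lies in checking the hypothesis $nv_n \to \infty$ for the completed sequence, which is exactly what the endpoint constraint $u_n \le u$ guarantees. Alternatively, and more directly, one can reinspect the proof of Lemma~\ref{lem:binom:aux}: after the substitution $k = nu$, each error term there is $n^{-1}$ times a bounded function of $k$ alone---the contribution of $s \le 1/2$ is $\Oh(n^{-1} k^3 e^{-k/2})$, the error from replacing $-k\log(1-v/k)$ by $v$ is $\Oh(u/k) = \Oh(n^{-1})$, and the boundary term is $\Oh(n^{-1} k\, e^{-k/2}(1+k/2))$---so the implied constants are absolute and independent of $u$. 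Either route yields the claim.
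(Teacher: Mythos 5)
Your argument is correct and is essentially the paper's own proof: both reduce the uniform claim to Lemma~\ref{lem:binom:aux} by observing that any (near-)maximizing points $v_n \in [u_n,1]$ satisfy $nv_n \ge nu_n \to \infty$, so the sequential pointwise bound applies. The only cosmetic difference is that you phrase this as a contradiction via a subsequence of near-maximizers, whereas the paper invokes continuity to have the supremum attained at some $v_n$ and applies the auxiliary lemma directly.
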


\begin{proof}
The function sending $u \in [u_n, 1]$ to $\lvert nu^2 \Exp [ S^{-1} \ind_{ \{ S \ge 1 \} } ] - 1 \rvert$, with $S \sim \Bin(n, u)$, is continuous and therefore attains its supremum at some $v_n \in [u_n, 1]$. Since $nv_n \ge nu_n \to \infty$ as $n \to \infty$, we can apply Lemma~\ref{lem:binom:aux} to find that the supremum is $\Oh(n^{-1})$ as $n \to \infty$.
\end{proof}

\subsection{Inequalities for binomial random variables}

If $S \sim \Bin(n, u)$ is a binomial random variable with succes probability $0 < u < 1$, then Bennett's inequality states that
\[
  \Pr \Bigl(
    \sqrt{n} \bigl\lvert \tfrac{S}{n} - u \bigr\rvert \ge \lambda
  \Bigr)
  \le
  2 \exp \Bigl\{
    - \frac{\lambda^2}{2u} 
    \psi \Bigl( \frac{\lambda}{\sqrt{n} u} \Bigr)
    \Bigr\}
  =
  2 \exp \Bigl\{
    - nu \, h \Bigl( 1 + \frac{\lambda}{\sqrt{n} u} \Bigr)
    \Bigr\}
\]
for $\lambda > 0$, where $\psi(x) = 2 \, h(1+x)/x^2$ and $h(x) = x(\log x - 1) + 1$; see for instance \citet[Proposition~A.6.2]{VanWel96}. Setting $\lambda = \sqrt{n} u \delta$, we find
\begin{equation}
\label{eq:bennett}
  \Pr \Bigl(
    \big\lvert \tfrac{S}{n} - u \big\rvert \ge u \delta
  \Bigr)
  \le
  2 \exp \{ - nu \, h(1+\delta) \},
  \qquad \delta > 0.
\end{equation}
Note that $h(1+\delta) = \int_0^\delta \log(1+t) \diff t \ge \int_0^\delta (t - \tfrac{1}{2} t^2) \diff t = \tfrac{1}{2} \delta^2 (1 - \tfrac{1}{3} \delta)$ for $\delta \ge 0$ and thus $h(1+\delta) \ge \tfrac{1}{3} \delta^2$ for $0 \le \delta \le 1$.
We extend \eqref{eq:bennett} to a vector of independent binomial random variables and in terms of the weight function $g$ in \eqref{eq:g}.

\begin{lemma}
\label{lem:bounds}
If $S_1, \dots, S_d$ are independent random variables with $S_j \sim \Bin(n,u_j)$ and $0 < u_j < 1$ for all $j \in \{1, \ldots, d\}$, then, for $\delta > 0$,
\begin{align}
\label{eq:upper}
  \Prob \Bigl\{ 
    g \bigl( \tfrac{S_1}{n}, \ldots, \tfrac{S_d}{n} \bigr) 
    \geq g(\vect u ) ( 1 + \delta ) 
  \Bigr\}
  &\leq 
  2d \exp\bigl\{ - n g(\vect u) h(1+\delta) \bigr\}, \\
\label{eq:lower}
  \Prob \Bigl\{ 
    g \bigl( \tfrac{S_1}{n}, \ldots, \tfrac{S_d}{n} \bigr) 
    \leq g(\vect u ) ( 1 - \delta )
  \Bigr\}
  &\leq
  4d \exp \bigl\{ - n g(\vect u) h(1+\delta) \bigr\},
\end{align}
with $h$ as above; in particular, $h(1+\delta) \ge \tfrac{1}{3} \delta^2$ for $0 < \delta \le 1$.
\end{lemma}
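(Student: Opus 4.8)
The plan is to reduce both concentration inequalities for $g(S_1/n,\dots,S_d/n)$ to one‑dimensional tail bounds for the individual empirical coordinates $\hat u_j=S_j/n$ and their complements $1-\hat u_j$, and then to invoke Bennett's inequality \eqref{eq:bennett} against a single reference binomial $G\sim\Bin(n,g(\vect u))$ whose success probability equals $g(\vect u)$ exactly (note $0<g(\vect u)<1$ under the hypothesis $0<u_j<1$, so \eqref{eq:bennett} applies). Write $g_j(\vect u)=u_j\wedge\bigvee_{k\ne j}(1-u_k)$, so that $g(\vect u)=\bigwedge_{j=1}^d g_j(\vect u)$. Two structural facts drive everything: first, $g(\vect u)\le g_j(\vect u)\le u_j$ and $g(\vect u)\le\bigvee_{k\ne j}(1-u_k)$ for every $j$; second, there is a deterministic index $j^*$ with $g_{j^*}(\vect u)=g(\vect u)$, for which either $u_{j^*}=g(\vect u)$ or else $\bigvee_{k\ne j^*}(1-u_k)=g(\vect u)$.

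For the upper bound \eqref{eq:upper}, since $g_{j^*}(\hat{\vect u})\ge g(\hat{\vect u})$, the event $\{g(\hat{\vect u})\ge g(\vect u)(1+\delta)\}$ forces $g_{j^*}(\hat{\vect u})\ge g(\vect u)(1+\delta)$, hence simultaneously $\hat u_{j^*}\ge g(\vect u)(1+\delta)$ and $\bigvee_{k\ne j^*}(1-\hat u_k)\ge g(\vect u)(1+\delta)$. I would bound whichever factor is tight at the true point. If $u_{j^*}=g(\vect u)$, then $\Pr\{\hat u_{j^*}\ge u_{j^*}(1+\delta)\}\le\exp\{-ng(\vect u)h(1+\delta)\}$ directly from \eqref{eq:bennett}. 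If instead $\bigvee_{k\ne j^*}(1-u_k)=g(\vect u)$, then every complement obeys $1-u_k\le g(\vect u)$, and a union bound over $k\ne j^*$ reduces matters to $\Pr\{1-\hat u_k\ge g(\vect u)(1+\delta)\}$; here the mean $1-u_k$ of $n(1-\hat u_k)\sim\Bin(n,1-u_k)$ lies below the threshold level $g(\vect u)$, so by stochastic monotonicity of $\Bin(n,p)$ in $p$ this is at most $\Pr\{G/n\ge g(\vect u)(1+\delta)\}\le\exp\{-ng(\vect u)h(1+\delta)\}$. Summing over at most $d-1$ indices gives \eqref{eq:upper}, with room to spare in the constant $2d$ (the threshold case $g(\vect u)(1+\delta)>1$ is trivial, the probability being zero).

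For the lower bound \eqref{eq:lower}, I would use $g=\bigwedge_j g_j$ in the opposite direction: $\{g(\hat{\vect u})\le g(\vect u)(1-\delta)\}$ means $g_j(\hat{\vect u})\le g(\vect u)(1-\delta)$ for some $j$, so a union bound over $j$ applies, and for each $j$ the event splits as $\{\hat u_j\le g(\vect u)(1-\delta)\}\cup\{\bigvee_{k\ne j}(1-\hat u_k)\le g(\vect u)(1-\delta)\}$. In the first event $u_j\ge g(\vect u)$; in the second, taking the maximiser $k'$ of $\{1-u_k:k\ne j\}$ gives $1-u_{k'}\ge g(\vect u)$, and the intersection over $k$ is contained in $\{1-\hat u_{k'}\le g(\vect u)(1-\delta)\}$. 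In both cases the relevant binomial has mean \emph{above} $g(\vect u)$, so stochastic monotonicity bounds each tail by $\Pr\{G/n\le g(\vect u)(1-\delta)\}\le\exp\{-ng(\vect u)h(1-\delta)\}$ from \eqref{eq:bennett}. One then closes the argument with the elementary inequality $h(1-\delta)\ge h(1+\delta)$ for $0<\delta\le1$ (which follows from $1/(1-s)\ge1+s$ after writing $h(1+x)=\int_0^x\log(1+t)\,\diff t$), together with $h(1+\delta)\ge\tfrac13\delta^2$ already recorded after \eqref{eq:bennett}; summing the two events over the $d$ indices produces the constant $4d$.

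The main obstacle is precisely the upper tail in the sub‑case where the binding constraint at $\vect u$ is a complement $1-u_k<g(\vect u)$: a naive application of Bennett to the atom $1-\hat u_k$ yields the exponent $n(1-u_k)h(\,\cdot\,)$ with the \emph{wrong}, too small prefactor $1-u_k$ rather than $g(\vect u)$. The clean fix is the stochastic‑monotonicity comparison to $G\sim\Bin(n,g(\vect u))$, which upgrades the prefactor to exactly $g(\vect u)$; equivalently one may verify directly that $p\mapsto p\,h(a/p)$ is non‑increasing on $(0,a]$ for fixed $a$, since its derivative equals $1-a/p\le0$ there. Everything else — the reduction through the single index $j^*$, the union bounds, and the comparison $h(1-\delta)\ge h(1+\delta)$ — is routine.
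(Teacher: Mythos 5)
Your proof is correct, and its skeleton coincides with the paper's: both arguments exploit the structure $g(\vect u)=\bigwedge_{j}\{u_j\wedge\bigvee_{k\ne j}(1-u_k)\}$ to reduce the two events to coordinatewise binomial tail events (union bound over $j$ for the lower tail, a case analysis on which term attains the minimum for the upper tail) and then invoke Bennett's inequality \eqref{eq:bennett}. The one point of genuine divergence is exactly the sub-case you flag as the main obstacle, the upper tail when the binding term is a complement with $1-u_k\le g(\vect u)$: the paper applies Bennett at the true parameter $1-u_k$ and repairs the prefactor with the analytic inequality $h(a(1+\delta))\ge a\,h(1+\delta)$ for $a=(1-u_1)/(1-u_k)\ge 1$, whereas you first pass to a reference variable $G\sim\Bin(n,g(\vect u))$ by stochastic dominance and apply Bennett there; as you yourself observe, the monotonicity of $p\mapsto p\,h(a/p)$ on $(0,a]$ is precisely the paper's inequality in disguise, so the two devices are interchangeable and yours is arguably the cleaner packaging. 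For the lower tail your detour through $G$ together with $h(1-\delta)\ge h(1+\delta)$ is correct but not needed: there the relevant parameters $u_j$ and $1-u_{k'}$ already exceed $g(\vect u)$, so the two-sided bound \eqref{eq:bennett} applied at the true parameter yields an exponent $-np\,h(1+\delta)\le -n g(\vect u)\,h(1+\delta)$ immediately, which is how the paper proceeds and how it arrives at the constant $4d$ without ever comparing $h(1-\delta)$ to $h(1+\delta)$.
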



\begin{proof}
Let us start with \eqref{eq:lower}. The definition of the weight function $g$ in \eqref{eq:g} yields
\[
  \Prob 
  \Bigl\{ 
    g \bigl( \tfrac{S_1}{n}, \dots , \tfrac{S_d}{n} \bigr) 
    \leq g(\vect u ) ( 1 - \delta )  
  \Bigr\}
  \leq  
  \sum_{j=1}^d 
  \Bigl[ 
    \Prob 
    \Bigl\{ 
      \tfrac{S_j}{n} \leq g(\vect u ) (1 - \delta) 
    \Bigr\} 
    +
    \Prob 
    \Bigl\{ 
      \max_{k \ne j} \bigl( 1 - \tfrac{S_{k}}{n} \bigr)
      \leq g(\vect u ) ( 1 - \delta) 
    \Bigr\} 
  \Bigr].
\]
Let us first consider the first term on the right-hand side, i.e., $\Prob \{ \tfrac{S_j}{n} \leq g(\vect u ) ( 1 - \delta ) \}$. By definition of the weight function we have $g (\vect u) \leq u_j$. By Bennett's inequality~\eqref{eq:bennett},
\begin{align*}
  \Prob \Bigl\{ \tfrac{S_j}{n} \leq g(\vect u) ( 1 - \delta ) \Bigr\}  
  &\leq 
  \Prob \Bigl\{ \tfrac{S_j}{n} \leq u_j ( 1 - \delta ) \Bigr\} \\
  &\le
  \Prob \Bigl\{ \big\lvert \tfrac{S_j}{n} - u_j \big\rvert 
    \geq u_j \delta \Bigr\} \\ 
  &\leq
  2 \exp \{ - n u_j h(1 + \delta) \} \\ 
  &\leq
  2 \exp \{ - n g(\vect u) h(1 + \delta) \} .
\end{align*}
Second, consider the term $\Prob \{ \max_{k \ne j} ( 1 - \tfrac{S_{k}}{n} )\leq g(\vect u ) ( 1 - \delta) \} $. Suppose $j=1$; the other cases can be treated exactly along the same lines. We have $g(\vect u) \leq \max_{k \neq 1}(1 - u_k)$. Assume without loss of generality that $\max_{k \neq 1} (1 - u_k) = 1 - u_2$. Then we obtain $g(\vect u) \le 1 - u_2$ and, by Bennett's inequality \eqref{eq:bennett} applied to $n - S_2 \sim \Bin(n, 1-u_2)$,
\begin{align*}
  \Prob
  \Bigl\{ 
    \max_{k \ne 1} \bigl(1-\tfrac{S_{k}}{n}\bigr)
    \leq g(\vect u ) (1 - \delta) 
  \Bigr\} 
  &\leq 
  \Prob
  \Bigl\{ 
    \max_{k \ne 1} \bigl(1-\tfrac{S_{k}}{n}\bigr)
    \leq (1 - u_2) (1 - \delta) 
  \Bigr\} \\
  &\leq
  \Prob 
  \Bigl\{ 
    1 - \tfrac{S_{2}}{n}
    \leq (1- u_2) (1 - \delta)
  \Bigr\} \\
  &\leq
  \Prob 
  \Bigl\{
    \big\lvert 1 - \tfrac{S_2}{n} - (1- u_2) \big\rvert 
    \geq (1- u_2 ) \delta 
  \Bigr\} \\
  &\leq
  2 \exp \{ -n (1-u_2) h(1 + \delta) \} \\
  &\leq
  2 \exp \{ -n g(\vect u) h(1 + \delta) \}.
\end{align*}

Let us now show \eqref{eq:upper}. First suppose $g(\vect u) = u_1$. Since $g(\frac{S_1}{n} , \dots \frac{S_d}{n }) \leq \frac{S_1}{n} $ we have, by Bennett's inequality~\eqref{eq:bennett}, 
\begin{align*}
  \Prob 
  \Bigl\{ 
    g \bigl( \tfrac{S_1}{n}, \dots, \tfrac{S_d}{n} \bigr) 
    \geq g(\vect u) (1 + \delta) 
  \Bigr\} 
  &\leq 
  \Prob 
  \Bigl\{ 
    \tfrac{S_1}{n} \geq u_1 (1 + \delta) 
  \Bigr\} \\
  &\le
  \Prob
  \Bigl\{
    \bigl\lvert \tfrac{S_1}{n} - u_1 \bigr\rvert \ge u_1 \delta
  \Bigr\} \\
  &\le
  2 \exp \bigl\{ - nu_1 h(1+\delta) \bigr\}
  =
  2 \exp \bigl\{ - n g(\vect u) h(1+\delta) \bigr\}.
\end{align*}
Finally, suppose that $g(\vect u) = 1 - u_1 \geq 1 - u_k$, for $k = 3, \ldots, d$. Note that $g (\tfrac{S_1}{n}, \dots, \tfrac{S_d}{n }) \leq \max_{k \neq 2} (1 - \tfrac{S_j}{n}) $, which yields 
\begin{align*}
  \Prob \Bigl\{ 
    g \bigl( \tfrac{S_1}{n}, \ldots, \tfrac{S_d}{n} \bigr) 
    \geq g(\vect u ) (1 + \delta)
  \Bigr\}
  &\leq 
  \Prob \Bigl\{ 
    \max_{k \neq 2} (1- \tfrac{S_k}{n}) \geq (1- u_1) (1 + \delta)
  \Bigr\} \\
  &\leq 
  \sum_{k \neq 2} \Prob \Bigl\{ 
    1 - \tfrac{S_k}{n} \geq (1- u_1)(1 + \delta) 
  \Bigr\}.
\end{align*}
By Bennett's inequality \eqref{eq:bennett} applied to $n - S_k \sim \Bin(n, 1-u_k)$ for every $k \ne 2$, we have, since $(1-u_1)/(1-u_k) \ge 1$,
\begin{align*}
  \Prob \Bigl\{ 
    1 - \tfrac{S_k}{n} \geq (1- u_1)(1 + \delta) 
  \Bigr\}
  &\le
  \Prob \Bigl\{
    \bigl\lvert 1 - \tfrac{S_k}{n} - (1-u_k) \bigr\rvert
    \geq
    (1 - u_1)(1+\delta) - (1 - u_k)
  \Bigr\} \\
  &\le
  2 \exp \Bigl\{
    - n (1 - u_k) h \Bigl( \tfrac{1 - u_1}{1 - u_k} (1 + \delta) \Bigr)
  \Bigr\}.
\end{align*}
For $a \ge 1$ and $\delta \ge 0$, a direct calculation\footnote{Or, since $h(x) = \int_1^x \log(t) \, \diff t$, we have $h(a(1+\delta)) = \int_1^{a(1+\delta)} \log(t) \, \diff t = a \int_{1/a}^{1+\delta} \log(as) \, \diff s \ge a \int_1^{1+\delta} \log(s) \, \diff s = a \, h(1+\delta)$ for $a \ge 1$ and $\delta \ge 0$.} shows that $h(a(1+\delta)) - a \, h(1+\delta) \ge h(a) \ge 0$ and thus $h(a(1+\delta)) \ge a \, h(1+\delta)$. Apply this inequality to $a = (1-u_1) / (1-u_k)$ to find
\begin{align*}
  \Prob \Bigl\{ 
    1 - \tfrac{S_k}{n} \geq (1- u_1)(1 + \delta) 
  \Bigr\}
  &\le
  2 \exp \Bigl\{ - n (1 - u_k) \tfrac{1 - u_1}{1 - u_k} h(1+\delta) \Bigr\} \\
  &=
  2 \exp \{ - n (1 - u_1) h(1+\delta) \}
  =
  2 \exp \{ - n g(\vect u) h(1+\delta) \}. \qedhere
\end{align*}
\end{proof}

\subsection{Extensions of results in \cite{BerBucVol17}}
\label{sec:BerBucVol17}

For any sequence $\delta_n>0 $ that converges to zero as $n \to \infty$, Lemma~4.10 in \cite{BerBucVol17} can be extended to 
\begin{equation}
\label{eq:extend1}
  \sup \left\{ 
    \left|
      \frac{\Cb_n(\vect u )}{g(\vect u)^\omega} 
      - 
      \frac{\Cb_n(\vect u')}{g(\vect u')^\omega} 
    \right|
    \; : \;
    g(\vect u) \ge c/n, \, 
    g(\vect u') \ge c/n, \,
    \lvert  \vect  u - \vect u' \rvert \leq \delta_n
  \right\}
  = \oh_\Prob(1), \qquad n \to \infty.
\end{equation}
Here, $\Cb_n = \sqrt{n} ( \tilde{C}_n - C )$ and $\tilde{C}_n$ is the empirical copula based on the generalized inverse function of the marginal empirical distribution functions \citep[beginning of Section~4.2]{BerBucVol17}.
Furthermore, Theorem~4.5 in the same reference can be extended to
\begin{equation}
\label{eq:extend2}
  \sup \left\{ 
    \left| 
      \frac{\hat \Cb_n(\vect u)}{g(\vect u)^\omega} - \frac{\bar \Cb_n(\vect u)}{g(\vect u )^\omega} 
    \right|
    \; : \;
    g(\vect u) \geq c/n
  \right\}
  = \oh_\Prob(1), \qquad n \to \infty.
\end{equation}

\begin{proof}
Let us start with \eqref{eq:extend1}. The result is similar to the result in Lemma 4.10, in particular Equation (4.1), in \cite{BerBucVol17}. A look at the proof of the result shows that the restriction $\vect u , \vect u' \in [c/n, 1-c/n]^d$ instead of $ \vect u , \vect u' \in \{ g \geq c/n \}$ is not needed. The proof of Equation (4.1) in Lemma 4.10 in \cite{BerBucVol17} is based on Lemma 4.7, 4.8 and Equations (4.8) and (4.8) which are all valid on sets of the form $N(c_{n1},c_{n2})= \{ g \in (c_{n1}, c_{n2}] \}$. Hence, in the proof, all suprema can be taken over $ \vect u , \vect u' \in \{ g \geq c/n \}$ instead of $\vect u , \vect u' \in [c/n, 1-c/n]^d$, which gives us exactly \eqref{eq:extend1}.

For the proof of \eqref{eq:extend2} note that for any $\vect u \in \{ g \geq c/n \}$ we can find $\vect u' \in \{ g \geq n^{-1/2} \}$ such that $\lvert \vect u - \vect u' \rvert \leq d n^{-1/2}$. To find such a $\vect u'$ is all that it is needed to extend the proof of Theorem 4.5 in \cite{BerBucVol17} to obtain \eqref{eq:extend2}.
\end{proof}

\section*{Acknowledgments}

The authors gratefully acknowledge the editor-in-chief, the associate editor, and the referees for additional references, for suggesting the idea of a weighted test of independence, and for various suggestions concerning the numerical experiments on the estimation of the Pickands dependence function.

Betina Berghaus gratefully acknowledges support by the Collaborative Research Center ``Statistical modeling of nonlinear dynamic processes'' (SFB 823) of the German Research Foundation (DFG). 

Johan Segers gratefully acknowledges funding by contract ``Projet d'Act\-ions de Re\-cher\-che Concert\'ees'' No.\ 12/17-045 of the ``Communaut\'e fran\c{c}aise de Belgique'' and by IAP research network Grant P7/06 of the Belgian government.

\bibliography{biblioN}
\end{document}